\documentclass{amsart}
\usepackage{amssymb}
\usepackage{amsfonts}

\setcounter{MaxMatrixCols}{10}

\newtheorem{theorem}{Theorem}
\theoremstyle{plain}

\newtheorem{definition}{Definition}
\newtheorem{example}{Example}

\newtheorem{lemma}{Lemma}

\newtheorem{proposition}{Proposition}
\newtheorem{remark}{Remark}

\numberwithin{equation}{section}
\input{tcilatex}

\begin{document}
\title{Characterizations of LA-semigroups by new types of fuzzy ideals }
\author{Muhammad Aslam}
\address{Deparment of Mathematics, Quaid-i-Azam University 45320, Islamabad
44000, Pakistan}
\email{draslamqau@yahoo.com}
\author{S. Abdullah}
\address{Deparment of Mathematics, Quaid-i-Azam University 45320, Islamabad
44000, Pakistan}
\email{saleemabdullah81@yahoo.com}
\author{M. Atique Khan}
\address{Deparment of Mathematics, Quaid-i-Azam University 45320, Islamabad
44000, Pakistan}
\email{atiquekhan\_16@yahoo.com}
\keywords{LA-semigroup, Fuzzy set, fuzzy normal subgroup, fuzzy coset}

\begin{abstract}
In this article, we give some characterization results of $\left( \in
_{\gamma },\in _{\gamma }\vee q_{\delta }\right) $-fuzzy left(right) ideals, 
$\left( \in _{\gamma },\in _{\gamma }\vee q_{\delta }\right) $-fuzzy
generalized bi-ideals and $\left( \in _{\gamma },\in _{\gamma }\vee
q_{\delta }\right) $-fuzzy bi-ideals of an LA-semigroup. We also give some
characterizations of LA-semigroups by the properties of $\left( \in _{\gamma
},\in _{\gamma }\vee q_{\delta }\right) $-fuzzy ideals.
\end{abstract}

\maketitle

\section{Introduction}

Kazim and Naseerudin have defined the concept of an LA-semigroup in 1972 
\cite{qw,MN}. Let $S$ be a non-empty set and binary operation $\ast $
satisfies $(a\ast b)\ast c=(c\ast b)\ast a$ for all $a,b,c\in S$. Then, $S$
is called an LA-semigroup. Later, Mushtaq et al. have investigated the
structure further and added many useful results to theory of LA-semigroups
see \cite{QMK,QMY,QMMY}. Mushtaq and Khan have developed ideal theory of
LA-semigroups \cite{MM}. In \cite{MNA}, Khan and Ahmad have given some
characterizations of LA-semigroups by their ideals. L. A. Zadeh founder of
the concept of fuzzy set \cite{1} in 1965. Behalf of this concept,
mathematicians initiated a natural framework for generalizing some basic
notions of algebra, e.g group theory, set theory, ring theory, topology,
measure theory and semigroup theory etc. Fuzzy logic is very useful in
modeling the granules as fuzzy sets. Bargeila and Pedrycz considered this
new computing methodology in \cite{22}. Pedrycz and Gomide in \cite{23}
considered the presentation of update trends in fuzzy set theory and its
applications. Foundations of fuzzy groups were laid by Rosenfeld in \cite{2}%
. The concept of fuzzy semigroup was introduced by Kuroki \cite{3,4}.
Recently, Khan and Khan defined the concept of fuzzy ideals in LA-semigroups 
\cite{Fuzz}. In \cite{7}, Murali described the notions of \textit{%
belongingness} of a fuzzy point to a fuzzy subset under a natural
equivalence on a fuzzy subset. In \cite{8}, the idea of \textit{%
quasi-coincidence} of a fuzzy point with a fuzzy set is defined. These two
ideas acted important role in generating some different types of fuzzy
algebras. Bhakat and Das \cite{9,10} gave the concept of $\left( \alpha
,\beta \right) $-fuzzy subgroups, where $\alpha ,\beta \in \{\in ,q,\in \vee
q,\in \wedge q\}$ and $\alpha \neq \in \wedge q$. These fuzzy subgroups are
further studied in \cite{11,12}. The concept of $\left( \in ,\in \vee
q\right) $-fuzzy subgroups is a viable generalization of Rosenfeld's fuzzy
subgroups. The concept of $\left( \in ,\in \vee q\right) $-fuzzy subrings
and ideals are described. In \cite{13}, S.K. Bhakat and P. Das introduced
the $\left( \in ,\in \vee q\right) $-fuzzy subrings and ideals. Davvaz
described the notions of $\left( \in ,\in \vee q\right) $-fuzzy subnearrings
and $\left( \in ,\in \vee q\right) $-fuzzy ideals of a near ring in \cite{14}%
. Jun and Song introduced the study of $\left( \alpha ,\beta \right) $-fuzzy
interior ideals of a semigroup in \cite{15}. In \cite{16}, the concept of $%
\left( \in ,\in \vee q\right) $-fuzzy bi-ideals of a semigroup described by
Kazanci and Yamak. In \cite{17}, Jun et al gave some different type of
characterization of regular semigroups by the properties of $\left( \in ,\in
\vee q\right) $-fuzzy ideals. Aslam et al introduced a new generalization of
fuzzy $\Gamma $-ideals in $\Gamma $-LA-semigroups \cite{222}. In \cite{221},
Abdullah et al studied a new type of fuzzy normal subgroup and fuzzy coset
of groups. Generalizing the idea of the quasi-coincident of a fuzzy point
with a fuzzy subset, Jun \cite{19} defined $\left( \in ,\in \vee
q_{k}\right) $-fuzzy subalgebras in BCK/BCI-algebras. In \cite{26}, $\left(
\in ,\in \vee q_{k}\right) $-fuzzy ideals of semigroups are introduced.
Further generalizing the concept of $\left( \in ,\in \vee q\right) $, J.
Zhan and Y. Yin defined $\left( \in _{\gamma },\in _{\gamma }\vee q_{\delta
}\right) $-fuzzy ideals of near rings \cite{25}. In \cite{24}, $\left( \in
_{\gamma },\in _{\gamma }\vee q_{\delta }\right) $-fuzzy ideals of
BCI-algebras are introduced. The Abdullah et al, the concept of $\left( \in
_{\gamma },\in _{\gamma }\vee q_{\delta }\right) $-fuzzy LA-subsemigroups, $%
\left( \in _{\gamma },\in _{\gamma }\vee q_{\delta }\right) $-fuzzy
left(right) ideals, $\left( \in _{\gamma },\in _{\gamma }\vee q_{\delta
}\right) $-fuzzy generalized bi-ideals and $\left( \in _{\gamma },\in
_{\gamma }\vee q_{\delta }\right) $-fuzzy bi-ideals of an LA-semigroup are
introduced \cite{A222}.

The paper is organized as: In section 2, we give basic definition of an
LA-semigroup and a fuzzy set. In section 3, we define $\left( \in _{\gamma
},\in _{\gamma }\vee q_{\delta }\right) $-fuzzy LA-subsemigroups, $\left(
\in _{\gamma },\in _{\gamma }\vee q_{\delta }\right) $-fuzzy left(right)
ideals, $\left( \in _{\gamma },\in _{\gamma }\vee q_{\delta }\right) $-fuzzy
generalized bi-ideals and $\left( \in _{\gamma },\in _{\gamma }\vee
q_{\delta }\right) $-fuzzy bi-ideals of LA-semigroups. In section 4 we give
some characterization results of these ideals in fuzzy varsion. We also give
characterizations of LA-semigroups by the properties of $\left( \in _{\gamma
},\in _{\gamma }\vee q_{\delta }\right) $-fuzzy ideals.

\section{Preliminaries}

An LA-subsemigroup of $S$ \ means a non-empty subset $A$ of $S$ such that $%
A^{2}\subseteq A.$ By a left (right) ideals of $S$ we mean a non-empty
subset $I$ of $S$ such that $SI\subseteq I(IS\subseteq I).$ An ideal $I$ is
said to be two sided or simply ideal if it is both left and right ideal. An
LA-subsemigroup $A$ is called bi-ideal if $\left( BS\right) B\subseteq A.$ A
non-empty subset $B$ is called generalized bi-ideal if $\left( BS\right)
B\subseteq A.$ A non-empty subset $Q$ is called a quasi-ideal if $QS\cap
SQ\subseteq Q.$ A non-empty subset $A$ is called interior ideal if it is
LA-subsemigroup of $S$ and $\left( SA\right) S\subseteq A.$ An LA-semigroup $%
S$ is called regular if for each $a\in S$ there exists $x\in S$ such that $%
a=(ax)a.$ An LA-semigroup $S$ is called intra-regular if for each $a\in S$
there exist $x,y\in S$ such that $a=(xa^{2})y.$ An LA-semigroup $S$ is
called weakly regular if for each $s\in S,$ there exists $x,y\in S,$ such
that $s=\left( sx\right) (sy).$ In an LA-semigroup $S,$ the following law
hold, (1) $\left( ab\right) c=\left( ab\right) c,$ for all $a,b,c\in S.$ $%
\left( 2\right) $ $\left( ab\right) \left( cd\right) =\left( ac\right)
\left( bd\right) ,$ for all $a,b,c,d\in S.$ If an LA-semigroup $S$ has a
left identity $e,$ then the following law holds, (3) $\left( ab\right)
\left( cd\right) =\left( db\right) \left( ca\right) ,$ for all $a,b,c,d\in
S. $ (4) $a(bc)=b(ac),$ for all $a,b,c\in S.$ A fuzzy subset $\mu $ of $S$
is a mapping $\mu :S\rightarrow \lbrack 0,1].$ For any two subsets $\mu $
and $\nu $ of $S,$ the product $\mu \circ \nu $ is defined as%
\begin{equation*}
\left( \mu \circ \nu \right) (x)=\QDATOPD\{ . {\bigvee\limits_{x=yz}\{\mu
(y)\wedge \nu (z)\text{, if there exist }y,z\in S,\text{ such that }x=yz}{0%
\text{ \ \ \ \ \ \ \ \ \ \ \ \ \ \ \ \ \ \ \ \ \ \ \ \ \ \ \ \ \ \ \ \ \ \ \
\ \ \ \ \ \ \ \ \ \ \ \ \ \ \ \ \ \ \ \ \ \ \ \ otherwise \ \ \ \ \ \ \ \ \
\ }}
\end{equation*}%
A fuzzy subset $\mu $ of the form%
\begin{equation*}
\mu (y)=\left\{ 
\begin{array}{c}
t(\neq 0)\text{ if }y=x \\ 
0\text{ \ \ \ \ \ \ \ if }y\neq x\text{\ }%
\end{array}%
\right.
\end{equation*}%
is said to be a fuzzy point with support $x$ and value $t$ and is denoted by 
$x_{t}$.A fuzzy point $x_{t}$ is said to be "belong
to"(res.,"quasicoincident with") a fuzzy set $\mu ,$ written as $x_{t}\in
\mu ($repectively$,x_{t}q\mu )$ if $\mu (x)\geq t$ $($repectively, $\mu
(x)+t>1).$ We write $x_{t}\in \vee q\mu $ if $x_{t}\in \mu $ or $x_{t}q\mu .$%
If $\mu (x)<t($respectively, $\mu (x)+t\leq 1),$ then we write $x_{t}%
\overline{\in }\mu ($repectively,$x_{t}\overline{q}\mu ).$ We note that $%
\overline{\in \vee q}$ means that $\in \vee q$ does not hold. Generalizing
the concept of $x_{t}q\mu ,$ Y. B. Jun \cite{19} defined $x_{t}q_{k}\mu ,$
where $k\in \lbrack 0,1]$ as $x_{t}q_{k}\mu $ if $\mu (x)+t+\dot{k}>1$ and $%
x_{t}\in \vee q_{k}\mu $ if $x_{t}\in \mu $ or $x_{t}q_{k}\mu .$

\begin{definition}
\cite{Fuzz} A fuzzy subset $\mu $ of an LA-semigroup $S$ is called fuzzy
LA-subsemigroup $S$ if $\mu (xy)\geq \mu (x)\wedge \mu (y)$ for all $x,y\in
S.$
\end{definition}

\begin{definition}
\cite{Fuzz} A fuzzy subset $\mu $ of an LA-semigroup $S$ is called fuzzy
left(right) ideal of $S$ if $\mu (xy)\geq \mu (y)(\mu (xy)\geq \mu (x))$ for
all $x,y\in S.$
\end{definition}

\begin{definition}
\cite{Fuzz} An LA-subsemigroup $\mu $ of an LA-semigroup $S$ is called fuzzy
bi-ideal of $S$ if $\mu (\left( xy\right) z)\geq \mu (x)\wedge \mu (z)$ for
all $x,y\in S.$
\end{definition}

\begin{definition}
\cite{Fuzz} A fuzzy subset $\mu $ of an LA-semigroup $S$ is called fuzzy
generalized bi-ideal of $S$ if $\mu (\left( xy\right) z)\geq \mu (x)\wedge
\mu (z)$ for all $x,y\in S.$
\end{definition}

\begin{definition}
\cite{Fuzz}Let $\mu $ be a fuzzy subset of an LA-semigroup $S.$ Then, for
all $t\in (0,1],$ the set $\mu _{t}=\{x\in S\left\vert \mu (x)\geq t\right.
\}$ is called a level subset of $S.$
\end{definition}

Following theorems are well known in LA-semigroups

\begin{theorem}
\label{TH1}Let $S$ be a weakly regular LA-semigroup with left identity $e.$
Then, the following conditions are equivalent:

(i) $S$ is regular.

(ii) $R\cap L=LR$ for every left ideal $L$ and right ideal $R.$
\end{theorem}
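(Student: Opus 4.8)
The plan is to prove the two implications separately, using as recurring tools the left invertive law $(ab)c=(cb)a$, the medial law (2), the paramedial law (3), and the identity $a=ea$. First I would record three structural observations that do not yet use regularity. (a) Every right ideal $R$ is two-sided: for $s\in S$, $r\in R$ one has $sr=(es)r=(rs)e\in RS\subseteq R$, so $SR\subseteq R$. (b) The map $\phi(n)=ne$ is an involution of $S$, since $(ne)e=(ee)n=en=n$; moreover $lr=(el)r=(rl)e$ gives, at the level of subsets, $LR=(RL)e=\phi(RL)$ and symmetrically $RL=\phi(LR)$. (c) Weak regularity forces $Le\subseteq L$ for every left ideal $L$: if $m\in L$ and $m=(mx)(my)$, then $me=((mx)(my))e=(e(my))(mx)=(my)(mx)=(xy)(mm)$, and since $mm\in SL\subseteq L$ we get $(xy)(mm)\in SL\subseteq L$. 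Consequently $R\cap L$ is $\phi$-invariant, because $R$ is two-sided and $Le\subseteq L$.

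For $(i)\Rightarrow(ii)$ I would first prove the companion equality $RL=R\cap L$. The inclusion $RL\subseteq R\cap L$ is immediate, since $rl\in RS\subseteq R$ and $rl\in SL\subseteq L$. For the reverse inclusion, take $a\in R\cap L$ and use regularity to write $a=(ax)a$; then $ax\in RS\subseteq R$ while $a\in L$, so $a=(ax)a\in RL$. Having $RL=R\cap L$, I would upgrade to the stated equality by applying the involution $\phi$: by (b) we have $LR=\phi(RL)=\phi(R\cap L)$, and by the $\phi$-invariance established in (c) this last set equals $R\cap L$. This is exactly the point at which the weak regularity hypothesis is consumed: without it one only reaches $R\cap L=RL$, and the left invertive law prevents the two factors from being interchanged freely.

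For $(ii)\Rightarrow(i)$, fix $a\in S$ and apply the hypothesis to the principal left ideal $L(a)$ and the principal right ideal $R(a)$ generated by $a$; recall $a=ea\in Sa\subseteq L(a)$ and $a\in R(a)$, so $a\in R(a)\cap L(a)=L(a)R(a)$. Writing this membership out in terms of the generators of the two principal ideals and simplifying with laws (1)--(4), I would extract an element $x\in S$ with $a=(ax)a$, which is regularity. The main obstacle I anticipate is precisely this converse: one must first pin down the exact description of $L(a)$ and $R(a)$ in an LA-semigroup with left identity, and then, because both the bracketing and the order of factors are governed by the left invertive law, repeatedly reshuffle the resulting product (again invoking weak regularity to move a trailing $e$ past a factor, as in observation (c)) until it is visibly of the form $(ax)a$. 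By contrast the forward direction is comparatively routine once the three observations are in place; it is the bookkeeping of factor placement in the converse that will require the most care.
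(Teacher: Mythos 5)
The paper gives no proof of this theorem (it is listed among results ``well known in LA-semigroups''), so there is nothing in the text to compare against; I am judging your proposal on its own terms. Your forward direction is correct and complete. Observations (a)--(c) all check out: $(ne)e=(ee)n=n$ so $\phi$ is an involution; $me=((mx)(my))e=(my)(mx)=(xy)(mm)\in SL\subseteq L$ uses weak regularity exactly where it is needed; $R\cap L\subseteq RL$ via $a=(ax)a$ with $ax\in RS\subseteq R$ is the standard step; and transporting $RL=R\cap L$ to $LR=R\cap L$ through the $\phi$-invariance of $R\cap L$ is a clean way to handle the failure of commutativity. That part I would accept as written.

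The converse, however, is a declaration of intent rather than a proof, and the difficulty you defer is a genuine one, not mere bookkeeping. You cannot simply invoke ``the principal right ideal $R(a)$'': in an LA-semigroup with left identity, $aS$ is a \emph{left} ideal (since $s(at)=a(st)$ by (4)) but is not in general a right ideal, because $(as)t=(ts)a$ lands in $Sa$ rather than back in $aS$. So before the hypothesis (ii) can be applied at all, you must exhibit an explicit right ideal containing $a$, and this is where weak regularity must be used a second time. Concretely: weak regularity gives $a=(ax)(ay)=a((ax)y)\in aS$, and then $sa=s(a((ax)y))=a(s((ax)y))\in aS$ for every $s$, so $Sa\subseteq aS$; combined with $(aS)S\subseteq Sa$ this shows $Sa=aS$ is a two-sided ideal containing $a=ea$. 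Taking $L=R=Sa$ in (ii) yields $a\in (Sa)(Sa)$, say $a=(ua)(va)$; the paramedial law gives $(ua)(va)=(aa)(vu)$, the left invertive law gives $(aa)(vu)=((vu)a)a$, and $Sa\subseteq aS$ converts $(vu)a$ into $az$, so $a=(az)a$ and $S$ is regular. Some such computation must actually appear; as it stands, the implication (ii)$\Rightarrow$(i) is unproved in your write-up.
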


\begin{theorem}
\label{TH12}Let $S$ be a weakly regular LA-semigroup with left identity $e.$
Then, the following conditions are equivalent:

(i) $S$ is regular.

(ii) $L\cap R\subseteq LR$ for every left ideal $L$ and right ideal $R.$
\end{theorem}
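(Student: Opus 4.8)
The plan is to prove the two implications separately, working throughout with the left invertive law $(ab)c=(cb)a$, the medial law $(ab)(cd)=(ac)(bd)$, and the two identities $(ab)(cd)=(db)(ca)$ and $a(bc)=b(ac)$ that are available because $S$ carries a left identity $e$. I will also use two pieces of elementary bookkeeping: a left ideal satisfies $SL\subseteq L$, and in an LA-semigroup with left identity every right ideal is automatically two-sided (for $r\in R$, $s\in S$ one has $sr=(es)r=(rs)e\in RS\subseteq R$), so a right ideal absorbs on both sides.

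First I would prove (i)$\Rightarrow$(ii). Let $a\in L\cap R$. By regularity pick $x\in S$ with $a=(ax)a$. Writing $a=ea$ and pushing the left identity through the left invertive law gives $ax=(ea)x=(xa)e$, and one further application of the left invertive law yields $a=\bigl((xa)e\bigr)a=(ae)(xa)$. Since $a\in R$ forces $ae\in RS\subseteq R$, and $a\in L$ forces $xa\in SL\subseteq L$, this short computation exhibits $a$ as a product of an element of $L$ with an element of $R$, so $a\in LR$. As $a\in L\cap R$ was arbitrary, $L\cap R\subseteq LR$; this implication is routine.

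For the converse (ii)$\Rightarrow$(i) I would lean on the weak regularity of $S$, which is where that hypothesis is genuinely needed. Given $a\in S$, weak regularity provides $x,y$ with $a=(ax)(ay)$, and the medial and left invertive laws collapse this to
\[ a=(ax)(ay)=(aa)(xy)=\bigl((xy)a\bigr)a, \]
so that $a\in(Sa)a$ — one step short of the regular form $a\in(aS)a$. To close this gap I would take the principal left ideal $L=Sa$ and the principal right ideal $R$ generated by $a$; both contain $a=ea$, so $a\in L\cap R$, and hypothesis (ii) gives $a\in LR=(Sa)R$, say $a=(sa)r$ with $s\in S$ and $r\in R$. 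Unwinding the membership of $r$ in the principal right ideal and rearranging $(sa)r$ by repeated use of the four identities should transform it into the shape $(at)a$ for some $t\in S$, which is exactly $a=(at)a$ and hence regularity.

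The second implication is the main obstacle, and the difficulty is concentrated entirely in that last rearrangement. Because the multiplication is neither associative nor commutative, moving from the weakly regular form $a\in(Sa)a$ to the regular form $a\in(aS)a$ means threading the product $LR$ through the left invertive and medial laws in precisely the right order, and the bookkeeping of which factor ends up inside which ideal is delicate. I expect the decisive auxiliary result to be an explicit description of the principal right ideal of $a$ (which under a left identity has a known closed form built from $a$, $aS$ and $Sa$), since it is the internal structure of $R$ that permits $(sa)r$ to be rewritten with $a$ occupying both outer slots.
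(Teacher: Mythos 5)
The paper states this theorem in its preliminaries as a ``well known'' fact and supplies no proof, so there is no in-paper argument to compare yours against; judged on its own terms, your proposal has two genuine gaps. First, in (i)$\Rightarrow$(ii) the computation $a=(ax)a=((xa)e)a=(ae)(xa)$ is correct, but it exhibits $a$ as (element of $R$)$\cdot$(element of $L$): indeed $ae\in RS\subseteq R$ and $xa\in SL\subseteq L$. Since multiplication in an LA-semigroup is neither commutative nor associative, this proves $L\cap R\subseteq RL$ (the form appearing in the paper's Theorem \ref{TH3}), not the asserted $L\cap R\subseteq LR$; your sentence claiming the factors lie in $L$ and $R$ in that order is simply backwards. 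The inclusion you actually need can be reached instead from weak regularity: $a=(ax)(ay)=(yx)(aa)=a((yx)a)$ by laws (3) and (4), and $(yx)a\in SR\subseteq R$ because, as you note, a right ideal is two-sided in the presence of a left identity, whence $a\in LR$. But that is a different computation from the one you wrote, and it is worth noticing that it uses only weak regularity and the left identity, not regularity -- which makes the role of condition (ii) in this equivalence more delicate than your plan acknowledges.

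Second, and more seriously, the implication (ii)$\Rightarrow$(i) is not proved. Everything after ``say $a=(sa)r$'' is a declaration of intent: you assert that rearranging $(sa)r$ ``should'' yield the form $(at)a$, and you yourself identify this rearrangement as ``the main obstacle.'' That rearrangement is the entire mathematical content of the implication, and it is precisely the step at which such arguments in LA-semigroups succeed or fail; it also depends on an explicit closed form for the principal right ideal generated by $a$, which you invoke but never write down (one must in particular verify that the candidate sets $Sa$ and the right ideal you choose really are ideals and really contain $a$). As it stands, the proposal establishes neither direction of the equivalence in the form claimed: the first direction proves a different inclusion, and the second is an outline whose decisive step is missing.
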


\begin{theorem}
\label{TH2}Let $S$ be a weakly regular LA-semigroup with left identity $e.$
Then, the following conditions are equivalent:

(i) $S$ is regular and intra-regular.

(ii) Every quasi ideal of $S$ is idempotent.
\end{theorem}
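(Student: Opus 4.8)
The plan is to prove the two implications separately, and in both directions I would use freely the observation that a quasi-ideal $Q$ always satisfies $Q^{2}\subseteq Q$: since $Q^{2}=QQ\subseteq QS$ and $Q^{2}=QQ\subseteq SQ$, we get $Q^{2}\subseteq QS\cap SQ\subseteq Q$. Thus in every case ``idempotent'' reduces to establishing the reverse inclusion $Q\subseteq Q^{2}$.

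For $(i)\Rightarrow(ii)$ I would fix a quasi-ideal $Q$ and an element $a\in Q$, and aim to exhibit $a$ as a product $a=bc$ of two elements of $Q$. Regularity supplies $x$ with $a=(ax)a$, and intra-regularity supplies $p,q$ with $a=(pa^{2})q$; note also that $a^{2}=aa\in Q^{2}\subseteq Q$. The idea is to feed these two representations into one another and repeatedly apply the left invertive law $(uv)w=(wv)u$, the medial law $(uv)(wt)=(uw)(vt)$, the paramedial law $(uv)(wt)=(tv)(wu)$, and identity (4) $u(vw)=v(uw)$, so as to rewrite $a$ with the factor $a^{2}$ (which lies in $Q$) pushed into two separate positions. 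Each resulting factor should then be recognized as an element of $Q$ by checking that it lies in $QS\cap SQ$, using that $a$ and $a^{2}$ are in $Q$. This bookkeeping is the heart of the direction and is the step I expect to be delicate.

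For $(ii)\Rightarrow(i)$ I would first record that every left ideal $L$ and every right ideal $R$ is a quasi-ideal (from $SL\subseteq L$ and $RS\subseteq R$), and that $L\cap R$ is a quasi-ideal as well, since $(L\cap R)S\subseteq RS\subseteq R$ and $S(L\cap R)\subseteq SL\subseteq L$ force $(L\cap R)S\cap S(L\cap R)\subseteq L\cap R$. Applying (ii) to $L\cap R$ gives $(L\cap R)^{2}=L\cap R$, and since $L\cap R\subseteq L$ and $L\cap R\subseteq R$ we obtain $L\cap R=(L\cap R)^{2}\subseteq LR$; Theorem \ref{TH12} then yields that $S$ is regular. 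For intra-regularity I would use the principal ideal $Sa$: with the left identity one checks via identity (4) that $S(Sa)\subseteq Sa$, and via the medial law together with $aS\subseteq Sa$ that $(Sa)S\subseteq Sa$, so $Sa$ is an ideal, hence a quasi-ideal, hence idempotent. The medial law gives $(Sa)^{2}=(SS)(aa)=Sa^{2}$, whence $Sa=Sa^{2}$ and therefore $a=wa^{2}$ for some $w\in S$.

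It remains to convert $a=wa^{2}$ into the required form $a=(xa^{2})y$. Combining it with regularity $a=(ax)a$, the left invertive law gives $a^{2}=((ax)a)a=a^{2}(ax)$; substituting this into $a=wa^{2}$ and using identity (4) produces $a=a^{2}z$ with $z=w(ax)$, and one further application of the left invertive law, $a^{2}=(a^{2}z)a=(az)a^{2}$, finally gives $a=a^{2}z=((az)a^{2})z$, which is exactly the intra-regularity condition. The main obstacle throughout is the identity juggling: in $(i)\Rightarrow(ii)$ one must choose the substitutions so that both factors of $a$ land inside $QS\cap SQ$, and in $(ii)\Rightarrow(i)$ one must steer the products into the precise shape $(xa^{2})y$ rather than settling for the more easily obtained $wa^{2}$.
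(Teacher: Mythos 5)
First, note that the paper never actually proves this statement: Theorem \ref{TH2} is listed among results quoted as ``well known in LA-semigroups'' and is used as a black box, so there is no in-paper proof to measure your attempt against. Judged on its own, your proposal is genuinely incomplete in the direction $(i)\Rightarrow(ii)$, which is the substantive half. You correctly reduce idempotence to $Q\subseteq Q^{2}$ and correctly identify the strategy (combine $a=(ax)a$ with $a=(ya^{2})z$ and certify each factor via $QS\cap SQ\subseteq Q$), but you never carry out the computation, and in a non-associative setting that computation \emph{is} the proof. For the record, it can be completed quickly along the lines you sketch: $a^{2}=aa\in QS\cap SQ\subseteq Q$; next $ya^{2}\in SQ$ and, by identity (4), $ya^{2}=y(aa)=a(ya)\in aS\subseteq QS$, so $ya^{2}\in Q$; then by the left invertive law $ax=((ya^{2})z)x=(xz)(ya^{2})\in SQ$, while trivially $ax\in aS\subseteq QS$, so $ax\in Q$; finally $a=(ax)a\in QQ=Q^{2}$. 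Until some such chain is written down and checked, the implication is unproved, and this is exactly the step you flag as ``delicate'' and then skip.

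The direction $(ii)\Rightarrow(i)$ is essentially correct and complete: $L\cap R$ is a quasi-ideal, so $L\cap R=(L\cap R)^{2}\subseteq LR$ and Theorem \ref{TH12} gives regularity; and your passage from $a=wa^{2}$ to the form $(xa^{2})y$ is verified correctly (though $a=a^{2}z=(ea^{2})z$ would have finished instantly). One inaccuracy: your claim that $Sa$ is a two-sided ideal rests on $aS\subseteq Sa$, which you do not justify and which does not follow in any obvious way from the left-identity axioms. Fortunately it is also unnecessary: $S(Sa)\subseteq Sa$ by identity (4) makes $Sa$ a left ideal, and every left ideal already satisfies $(Sa)S\cap S(Sa)\subseteq S(Sa)\subseteq Sa$, so $Sa$ is a quasi-ideal containing $a=ea$, which is all your argument needs.
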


\begin{theorem}
\label{TH3}Let $S$ be a weakly regular LA-semigroup with left identity $e.$
Then, the following conditions are equivalent:

(i) $S$ is regular.

(ii) $A\cap B=BA$ for every left ideal $A$ and right ideal $B$ of $S.$

(ii) $\left( AS\right) A=A$ for every quasi ideal $A$ of $S.$
\end{theorem}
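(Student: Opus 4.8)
The three conditions are proved equivalent through the cycle (i) $\Rightarrow$ (iii) $\Rightarrow$ (ii) $\Rightarrow$ (i), where I write (iii) for the quasi-ideal condition $(AS)A=A$ (the statement lists it as a second ``(ii)''). The link (i) $\Rightarrow$ (iii) is short: if $A$ is a quasi-ideal and $a\in A$, regularity supplies $x$ with $a=(ax)a$, so $a=(ax)a\in (AS)A$ and thus $A\subseteq (AS)A$. For the reverse inclusion I would invoke the standard fact from LA-semigroup ideal theory that every quasi-ideal of an LA-semigroup with left identity is a bi-ideal, giving $(AS)A\subseteq A$. The easy half $(AS)A\subseteq SA$ is immediate from $AS\subseteq S$; the companion inclusion $(AS)A\subseteq AS$, which then forces $(AS)A\subseteq AS\cap SA\subseteq A$, is the nontrivial content I would take from the cited ideal theory.

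For (iii) $\Rightarrow$ (ii), let $A$ be a left ideal and $B$ a right ideal. First note that $A\cap B$ is a quasi-ideal: since $A\cap B\subseteq B$ and $A\cap B\subseteq A$, we get $(A\cap B)S\cap S(A\cap B)\subseteq BS\cap SA\subseteq B\cap A$. Applying (iii) to the quasi-ideal $A\cap B$ and using $(A\cap B)S\subseteq BS\subseteq B$ together with $A\cap B\subseteq A$ yields $A\cap B=\big((A\cap B)S\big)(A\cap B)\subseteq BA$. Conversely $BA\subseteq SA\subseteq A$ and $BA\subseteq BS\subseteq B$, so $BA\subseteq A\cap B$; hence $A\cap B=BA$. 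This link is routine once one observes that $A\cap B$ is a quasi-ideal.

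The substance of the theorem is (ii) $\Rightarrow$ (i), which I would test on principal ideals. For $a\in S$ one has the principal left ideal $Sa$ (indeed $a=ea\in Sa$, and $S(Sa)=(SS)(Sa)=(aS)(SS)=(aS)S=Sa$ by the paramedial and left-invertive laws), and the principal right ideal $R[a]=aS\cup Sa$ (the closure of $\{a\}$ under right multiplication cycles, since $(aS)S=Sa$ and $(Sa)S=aS$). Feeding $A=Sa$ and $B=R[a]$ into (ii) gives $Sa=Sa\cap R[a]=R[a]\cdot Sa=(aS)(Sa)\cup (Sa)(Sa)$; as $a\in Sa$, this places $a$ in $(aS)(Sa)\cup (Sa)(Sa)$. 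The medial law gives $(Sa)(Sa)=Sa^{2}$, and the identity $a(bc)=b(ac)$ gives $(aS)(Sa)=S\big((aS)a\big)$, so $a\in Sa^{2}\cup S\big((aS)a\big)$.

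The main obstacle is converting this membership into the exact regular form $a=(ax)a$, and here the weak regularity hypothesis is essential. Weak regularity gives $a=(au)(av)=(aa)(uv)=a^{2}(uv)$, and since $(aa)t=(ta)a$ by the left-invertive law, this already yields $a=\big((uv)a\big)a$, i.e.\ $a=(s_{0}a)a$ with leading factor $s_{0}a\in Sa$. Regularity asks instead for a leading factor in $aS$, so the crux is to use (ii) to replace the factor $s_{0}a\in Sa$ by one of the form $ax\in aS$; the branch $a\in S\big((aS)a\big)$ obtained above, which already exhibits a subterm $(as_{1})a\in (aS)a$, is the natural place to effect this exchange. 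I expect this rebracketing to be delicate, because the twisted associativity of an LA-semigroup systematically pushes products into $SA$ rather than $AS$, so the argument must play the left identity and weak regularity against condition (ii) rather than rely on formal identities alone. Once $a=(ax)a$ is secured for every $a$, $S$ is regular and the cycle closes.
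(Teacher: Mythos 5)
The paper never actually proves this theorem: it is listed in the preliminaries under ``Following theorems are well known in LA-semigroups'' and is only cited later, so there is no in-paper argument to compare yours against. Judged on its own terms, your proposal has a genuine gap exactly where you flag one. The cycle (i)$\Rightarrow$(iii)$\Rightarrow$(ii) is fine modulo the outsourced fact that quasi-ideals are generalized bi-ideals (which at least matches the fuzzy analogue the paper proves for weakly regular LA-semigroups with left identity), and your verification that $Sa$ is a left ideal and $aS\cup Sa$ a right ideal is correct. But (ii)$\Rightarrow$(i) is not completed: you stop at $a\in Sa^{2}\cup S\bigl((aS)a\bigr)$ and defer the conversion to $a=(ax)a$ as a ``delicate rebracketing.'' That deferral is the entire content of the theorem, so as written this is an outline of an approach, not a proof.

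Concretely, your two branches behave very differently. The branch $a\in(aS)(Sa)$ can in fact be closed, but by a different manipulation than the one you chose: the medial law gives $(as)(s'a)=(as')(sa)$, the left invertive law gives $(as')(sa)=\bigl((sa)s'\bigr)a$, and $(Sa)S\subseteq S(aS)\subseteq aS$ (via $(sa)(et)=(se)(at)$ and $u(at)=a(ut)$), so $a=(aw)a$ with $w\in S$ and regularity follows; your identity $(aS)(Sa)=S\bigl((aS)a\bigr)$ wraps the good sub-term inside an outer left factor and points the wrong way. The branch $a\in(Sa)(Sa)=Sa^{2}$ is the real problem: it only yields $a=ta^{2}=a(ta)$, a left-regularity-type condition, and neither weak regularity (which gives $a=(wa)a$ with $wa\in Sa$, as you note) nor the laws (1)--(4) obviously upgrade this to $a=(ax)a$. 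So you need either a different choice of right ideal $B$ that eliminates the $(Sa)(Sa)$ term from $B\cdot Sa$, or a separate argument handling that case; without one, the implication (ii)$\Rightarrow$(i) does not go through.
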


\section{$\left( \in _{\protect\gamma },\text{ }\in _{\protect\gamma }\vee
q_{\protect\delta }\right) $-FUZZY IDEALS}

Generalizing the notion of $\left( \in ,\in \vee q\right) ,$ in \cite{24,25} 
$\left( \in _{\gamma },\in _{\gamma }\vee q_{\delta }\right) $-fuzzy ideals
of near rings and BCI-algebras are introduced. Let $\gamma ,\delta \in
\lbrack 0,1]$ be such that $\gamma <\delta .$ For fuzzy point $x_{t}$ and
fuzzy subset $\mu $ of $S,$ we say

$(i)$\ $x_{t}\in _{\gamma }\mu $ if $\mu (x)\geq t>\gamma .$

$(ii)\ x_{t}q_{\delta }\mu $ \ if $\mu (x)+t>2\delta .$

$(iii)\ x_{t}\in _{\gamma }\vee q_{\delta }\mu $ if $x_{t}\in _{\gamma }\mu $
or $x_{t}q_{\delta }\mu .$

$(iv)\ x_{t}\overline{\in }_{\gamma }\vee \overline{q}_{\delta }\mu $ if $%
x_{t}\overline{\in }_{\gamma }\mu $ or $x_{t}\overline{q}_{\delta }\mu $.

In this section, we introduce the concept of $\left( \in _{\gamma },\in
_{\gamma }\vee q_{\delta }\right) $-fuzzy LA-subsemigroup, $\left( \in
_{\gamma },\in _{\gamma }\vee q_{\delta }\right) $-fuzzy left(right) ideal, $%
\left( \in _{\gamma },\in _{\gamma }\vee q_{\delta }\right) $-fuzzy
generalized bi-ideal and $\left( \in _{\gamma },\in _{\gamma }\vee q_{\delta
}\right) $-fuzzy bi-ideal of an LA-semigroup $S$. We also study some basic
properties of these ideals.

\begin{definition}
\cite{A222}A fuzzy subset $\mu $ of an LA-semigroup $S$ is called an $\left(
\in _{\gamma },\in _{\gamma }\vee q_{\delta }\right) $-fuzzy LA-subsemigroup
of $S$ if for all $a,b\in S$ and $t,r\in (\gamma ,1]$, $\ x_{t},y_{r}\in
_{\gamma }\mu $ implies that $\left( ab\right) _{t\wedge r}\in _{\gamma
}\vee q_{\delta }\mu .$
\end{definition}

\begin{remark}
\cite{A222}Every fuzzy LA-subsemigroup and every $\left( \in ,\in \vee
q\right) $-fuzzy LA-subsemigroup is an $\left( \in _{\gamma },\in _{\gamma
}\vee q_{\delta }\right) $-fuzzy LA-subsemigroup but the converse is not
true.
\end{remark}

\begin{definition}
\cite{A222}A fuzzy subset $\mu $ of an LA-semigroup $S$ is called \ an \ $%
\left( \in _{\gamma },\in _{\gamma }\vee q_{\delta }\right) $-fuzzy
generalized bi- ideal if for all $a,b,s\in S$ and $t,r\in (\gamma ,1],$ $%
a_{t}\in _{\gamma }\mu ,b_{r}\in _{r}\mu $ implies that $\left( \left(
as\right) b\right) _{t\wedge r}\in _{\gamma }\vee q_{\delta }\mu .$
\end{definition}

\begin{definition}
\cite{A222}A fuzzy subset $\mu $ of an LA-semigroup $S$ is called \ an \ $%
\left( \in _{\gamma },\in _{\gamma }\vee q_{\delta }\right) $ fuzzy bi-
ideal if for all $a,b,s\in S$ and $t,r\in (\gamma ,1]$ $\ a_{t}\in _{\gamma
}\mu ,$ $b_{r}\in _{r}\mu $ implies that

(i) $\left( ab\right) _{t\wedge r}\in _{\gamma }\vee q_{\delta }\mu .$

(ii) $\left( \left( as\right) b\right) _{t\wedge r}\in _{\gamma }\vee
q_{\delta }\mu .$
\end{definition}

\begin{definition}
\cite{A222}A fuzzy subset $\mu $ of an LA-semigroup $c$ is called an $\left(
\in _{\gamma },\in _{\gamma }\vee q_{\delta }\right) $-fuzzy interior ideal
of $c$ if for all $a,b,c\in S$ and $t,r\in (\gamma ,1],$ the following
conditions hold:

$(i)$ $a_{t},b_{r}\in _{\gamma }\mu $ implies that $\left( ab\right)
_{t\wedge r}\in _{\gamma }\vee q_{\delta }\mu .$

$(ii)$ $c_{t}\in _{\gamma }\mu $ implies that $\left( \left( ac\right)
b\right) _{t}\in _{\gamma }\vee q_{\delta }\mu .$
\end{definition}

\begin{definition}
\cite{A222}A fuzzy subset $\mu $ of an LA-semigroup $S$ is called an $\left(
\in _{\gamma },\in _{\gamma }\vee q_{\delta }\right) $-fuzzy quasi-ideal of $%
S,$\ if it satisfies,

$\mu (x)\vee \gamma \geq \left( \mu \circ 1\right) (x)\wedge \left( 1\circ
\mu \right) (x)\wedge \delta ,$
\end{definition}

\section{Regular and Weakly Regular\ LA-Semigroups.}

In this section we characterize weakly regular, regular and intra-regular
LA-semigroups by the prperties of their $\left( \in _{\gamma },\in _{\gamma
}\vee q_{\delta }\right) $-fuzzy ideals.

\begin{theorem}
Every $\left( \in _{\gamma },\in _{\gamma }\vee q_{\delta }\right) $-fuzzy
generalized bi-ideal of a weakly regular LA-semigroup $S$ with left identity 
$e$ is an $\left( \in _{\gamma },\in _{\gamma }\vee q_{\delta }\right) $%
-fuzzy bi-ideal of $S.$
\end{theorem}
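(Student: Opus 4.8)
The plan is to show that the generalized bi-ideal $\mu$ automatically satisfies the extra ``LA-subsemigroup'' clause that a bi-ideal requires, namely that $a_{t},b_{r}\in _{\gamma }\mu $ forces $\left( ab\right) _{t\wedge r}\in _{\gamma }\vee q_{\delta }\mu $. The other defining clause of a bi-ideal, the $\left( \left( as\right) b\right) $-condition, is literally the generalized bi-ideal hypothesis, so nothing needs to be done for it. Thus the whole problem reduces to realizing the plain product $ab$ as a product of the shape $\left( as\right) b$, after which the hypothesis can be applied.

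The key step, and the only real obstacle, is to prove that in a weakly regular LA-semigroup with left identity every element $a$ admits a right factorization $a=as$ for some $s\in S$. I would obtain this from weak regularity, $a=\left( ax\right) \left( ay\right) $, together with law $(4)$, $u(vw)=v(uw)$, which is available precisely because $S$ has a left identity $e$. Reading $ay$ as $a\cdot y$ and applying law $(4)$ with $u=ax$, $v=a$, $w=y$ gives $\left( ax\right) \left( ay\right) =a\left( \left( ax\right) y\right) $. Combining this with weak regularity yields $a=a\left( \left( ax\right) y\right) $, that is, $a=as$ with $s=\left( ax\right) y\in S$.

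Once $a=as$ is available the remainder is immediate. For this $s$ we have the identity $ab=\left( as\right) b$ in $S$. Hence, given $a_{t}\in _{\gamma }\mu $ and $b_{r}\in _{\gamma }\mu $ with $t,r\in \left( \gamma ,1\right] $, the generalized bi-ideal property applied to this particular $s$ yields $\left( \left( as\right) b\right) _{t\wedge r}\in _{\gamma }\vee q_{\delta }\mu $, which is exactly $\left( ab\right) _{t\wedge r}\in _{\gamma }\vee q_{\delta }\mu $. This supplies the missing subsemigroup clause, so $\mu $ satisfies both defining conditions of an $\left( \in _{\gamma },\in _{\gamma }\vee q_{\delta }\right) $-fuzzy bi-ideal.

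I would isolate the factorization $a=as$ as a short preliminary claim, since it is the single place where weak regularity and the left identity enter; everything after it is a one-line substitution into the definition of a generalized bi-ideal. I expect no further difficulty, the only point requiring care being the correct association when invoking law $(4)$, namely treating $\left( ax\right) \left( ay\right) $ as $u(vw)$ with $v=a$ so that the factor $a$ is pulled outside.
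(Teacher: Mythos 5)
Your proof is correct and follows essentially the same route as the paper: both use weak regularity $a=(ax)(ay)$ together with law $(4)$ to rewrite $a=a((ax)y)$, hence $ab=(as)b$ with $s=(ax)y$, and then invoke the generalized bi-ideal condition to obtain the LA-subsemigroup clause. The only cosmetic difference is that you phrase the conclusion via fuzzy points while the paper uses the equivalent inequality $\mu (ab)\vee \gamma \geq \mu (a)\wedge \mu (b)\wedge \delta $.
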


\begin{proof}
Suppose that $\mu $ is an $\left( \in _{\gamma },\in _{\gamma }\vee
q_{\delta }\right) $-fuzzy generalized bi-ideal of $S.$ Let $a,b\in S.$
Since $S$ is weakly regular so there exists $x,y\in S,$ such that, $%
a=(ax)(ay).$ Then, by using $\left( 4\right) ,$ we have $ab=(a(ax)y)b).$ So, 
\begin{equation*}
\mu (ab)\vee \gamma =\mu (a((ax)y)b)\vee \gamma \geq \mu (a)\wedge \mu
(b)\wedge \delta .
\end{equation*}%
This shows that $\mu $\ is an $\left( \in _{\gamma },\in _{\gamma }\vee
q_{\delta }\right) $-fuzzy\ LA-subsemigroup of $c.$ Hence, $\mu $ is an $%
\left( \in _{\gamma },\in _{\gamma }\vee q_{\delta }\right) $-fuzzy bi-ideal
of $c.$
\end{proof}

\begin{theorem}
Every $\left( \in _{\gamma },\in _{\gamma }\vee q_{\delta }\right) $-fuzzy
quasi-ideal of a wekly regular LA-semigroup $S$ with left identity $e,$ is
an $\left( \in _{\gamma },\in _{\gamma }\vee q_{\delta }\right) $-fuzzy
bi-ideal of $S.$
\end{theorem}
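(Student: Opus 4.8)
The plan is to reduce the point-based bi-ideal condition to its two equivalent inequality forms and then to feed suitable factorizations of the relevant elements into the defining inequality of a quasi-ideal. Concretely, I would first record that, for the threshold data $\gamma<\delta$, the clause ``$a_{t},b_{r}\in _{\gamma }\mu $ implies $(ab)_{t\wedge r}\in _{\gamma }\vee q_{\delta }\mu $'' is equivalent to the pointwise inequality $\mu (ab)\vee \gamma \geq \mu (a)\wedge \mu (b)\wedge \delta $, and likewise the generalized bi-ideal clause is equivalent to $\mu ((as)b)\vee \gamma \geq \mu (a)\wedge \mu (b)\wedge \delta $. Thus it suffices to establish these two inequalities for an arbitrary $\left( \in _{\gamma },\in _{\gamma }\vee q_{\delta }\right) $-fuzzy quasi-ideal $\mu $, using only its defining property $\mu (x)\vee \gamma \geq \left( \mu \circ 1\right) (x)\wedge \left( 1\circ \mu \right) (x)\wedge \delta $.

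For the subsemigroup clause I would simply apply the quasi-ideal inequality at $x=ab$. Since $ab=a\cdot b$ is itself a factorization, the definition of $\circ $ gives $\left( \mu \circ 1\right) (ab)\geq \mu (a)$ and $\left( 1\circ \mu \right) (ab)\geq \mu (b)$, whence $\mu (ab)\vee \gamma \geq \mu (a)\wedge \mu (b)\wedge \delta $. I note that this step needs no regularity hypothesis at all.

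The real content is the clause for $(as)b$. Applying the quasi-ideal inequality at $x=(as)b$, I need the two lower bounds $\left( 1\circ \mu \right) ((as)b)\geq \mu (a)$ and $\left( \mu \circ 1\right) ((as)b)\geq \mu (b)$, after which their minimum is $\geq \mu (a)\wedge \mu (b)$ and the result follows. The first bound is cheap: by the left invertive law $(as)b=(bs)a$, which exhibits $a$ as a right factor, so $\left( 1\circ \mu \right) ((as)b)\geq \mu (a)$. The second bound is where weak regularity enters: applying it to $b$ gives $b=(bx)(by)$ for some $x,y\in S$, and then the identity $a(bc)=b(ac)$ yields $(bx)(by)=b((bx)y)$, so that $b$ becomes an explicit left factor of itself. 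Substituting and applying $a(bc)=b(ac)$ once more, $(as)b=(as)\bigl(b((bx)y)\bigr)=b\bigl((as)((bx)y)\bigr)$, which displays $b$ as a left factor of $(as)b$ and hence gives $\left( \mu \circ 1\right) ((as)b)\geq \mu (b)$.

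I expect the main obstacle to be precisely this last manipulation: the left invertive law alone only ever relocates factors to the right, so it can only produce $\left( 1\circ \mu \right) $-type (right-factor) bounds, and it is the combination of weak regularity with the identity $a(bc)=b(ac)$ that is needed to surface $a$ or $b$ as a genuine left factor and thereby control $\left( \mu \circ 1\right) ((as)b)$. Once both bounds are in hand, a single application of the quasi-ideal inequality closes the argument, and together with the subsemigroup clause this shows that $\mu $ is an $\left( \in _{\gamma },\in _{\gamma }\vee q_{\delta }\right) $-fuzzy bi-ideal of $S$.
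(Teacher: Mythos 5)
Your proof is correct, and while it follows the same overall strategy as the paper --- reduce both bi-ideal clauses to the pointwise inequalities and feed explicit factorizations of $ab$ and $(as)b$ into the quasi-ideal condition $\mu (x)\vee \gamma \geq \left( \mu \circ 1\right) (x)\wedge \left( 1\circ \mu \right) (x)\wedge \delta $ --- the factorizations you choose are genuinely different from the paper's, and cleaner. The paper applies weak regularity to $a$, writing $a=(ax)(ay)$, and then runs a fairly long chain through the identities $(1)$--$(4)$ to rewrite $(as)b$ in the form $a\cdot w$, so that $\left( \mu \circ 1\right) ((as)b)\geq \mu (a)$; it pairs this with the trivial factorization $(as)b=(as)\cdot b$ to get $\left( 1\circ \mu \right) ((as)b)\geq \mu (b)$. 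You instead get $\left( 1\circ \mu \right) ((as)b)\geq \mu (a)$ for free from the left invertive law $(as)b=(bs)a$, and apply weak regularity to $b$, via $b=(bx)(by)=b((bx)y)$ and two uses of $a(bc)=b(ac)$, to surface $b$ as a left factor and obtain $\left( \mu \circ 1\right) ((as)b)\geq \mu (b)$. Both routes use all the hypotheses (weak regularity and the left identity, the latter through identity $(4)$), but yours replaces the paper's five-line identity chain (which, incidentally, contains typographical slips where $b$ mutates into $c$) with two short, easily checked manipulations; your structural remark that the left invertive law alone cannot produce left-factor bounds, so that weak regularity must be spent on whichever element needs to appear on the left, is a fair diagnosis of why some such computation is unavoidable. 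Your handling of the LA-subsemigroup clause is identical to the paper's and, as you note, uses no regularity at all.
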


\begin{proof}
Suppose that $\mu $ is an $\left( \in _{\gamma },\in _{\gamma }\vee
q_{\delta }\right) $-fuzzy quasi-ideal of $S$ and $a\in S.$ Since $S$ is
weakly regular, so their exist $x,y\in S,$ such that, $a=(ax)(ay).$ Now,%
\begin{eqnarray*}
\mu (ab)\vee \gamma &\geq &\left( \mu \circ 1\right) (ab)\wedge \left(
1\circ \mu \right) (ab)\wedge \delta \\
&=&\left[ \bigvee\limits_{ab=lm}\{\mu (l)\wedge 1(m)\}\right] \wedge \left[
\bigvee\limits_{ab=uv}\{1(u)\wedge \mu (v)\}\right] \wedge \delta \\
&\geq &\left[ \mu (a)\wedge 1(b)\right] \wedge \left[ 1(a)\wedge \mu (b)%
\right] \wedge \delta \\
&=&\left[ \mu (a)\wedge 1\right] \wedge \left[ 1\wedge \mu (b)\right] \wedge
\delta \\
&=&\mu (a)\wedge \mu (b)\wedge \delta .
\end{eqnarray*}%
Thus, $\mu $\ is an $\left( \in _{\gamma },\in _{\gamma }\vee q_{\delta
}\right) $-fuzzy LA-subsemigroup of $S.$ Now by using (4),(1),(2) and (3) we
have%
\begin{eqnarray*}
(as)b &=&(((ax)(ay))s)(eb) \\
&=&((a((ax)y))s)(eb) \\
&=&((s((ax)y))a)(ec) \\
&=&(ce)(a(s((ax)y))) \\
&=&a((ce)(s((ax)y))).
\end{eqnarray*}%
So,%
\begin{eqnarray*}
\mu (\left( as\right) b)\vee \gamma &\geq &\left( \mu \circ 1\right) (\left(
as\right) b)\wedge \left( 1\circ \mu \right) (\left( as\right) b)\wedge
\delta \\
&=&\left[ \bigvee\limits_{\left( as\right) b=a((ce)(s((ax)y)))=lm}\{\mu
(l)\wedge 1(m)\}\right] \wedge \left[ \bigvee\limits_{\left( as\right)
b=uv}\{1(u)\wedge \mu (v)\}\right] \wedge \delta \\
&\geq &\left[ \mu (a)\wedge 1((ce)(s((ax)y)))\right] \wedge \left[
1(as)\wedge \mu (b)\right] \wedge \delta \\
&=&\left[ \mu (a)\wedge 1\right] \wedge \left[ 1\wedge \mu (b)\right] \wedge
\delta \\
&=&\mu (a)\wedge \mu (b)\wedge \delta .
\end{eqnarray*}%
Thus, $\mu $ is an $\left( \in _{\gamma },\in _{\gamma }\vee q_{\delta
}\right) $-fuzzy bi-ideal of $S.$
\end{proof}

\begin{theorem}
If $\mu $ is an $\left( \in _{\gamma },\in _{\gamma }\vee q_{\delta }\right) 
$-fuzzy left ideal and $\nu $ be the $\left( \in _{\gamma },\in _{\gamma
}\vee q_{\delta }\right) $-fuzzy right ideal of a weakly regular
LA-semigroup $S$ with left identity $e,$ then $\mu \circ \nu $ is an $\left(
\in _{\gamma },\in _{\gamma }\vee q_{\delta }\right) $-fuzzy two-sided ideal
of $S.$
\end{theorem}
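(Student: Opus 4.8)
The plan is to verify directly that $\mu \circ \nu$ satisfies both the $\left( \in _{\gamma },\in _{\gamma }\vee q_{\delta }\right)$-fuzzy left ideal inequality $\left( \mu \circ \nu \right)(ab)\vee \gamma \geq \left( \mu \circ \nu \right)(b)\wedge \delta$ and the right ideal inequality $\left( \mu \circ \nu \right)(ab)\vee \gamma \geq \left( \mu \circ \nu \right)(a)\wedge \delta$ for all $a,b\in S$. Since $\mu \circ \nu$ vanishes on elements that are not products, both estimates are trivial unless $b$ (respectively $a$) factors, so I would fix an arbitrary factorization, bound the single term $\mu (y)\wedge \nu (z)$, and take the supremum at the end.

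Before the two main estimates I would record two auxiliary facts that do the real work. First, using only the left identity, every $\left( \in _{\gamma },\in _{\gamma }\vee q_{\delta }\right)$-fuzzy right ideal is automatically a left ideal: writing $ab=\left( ea\right) b=\left( ba\right) e$ by the defining law and applying the right ideal inequality twice (to $\left( ba\right) e$, then to $ba$) gives $\nu (ab)\vee \gamma \geq \nu (b)\wedge \delta$. Second, and this is where weak regularity is essential, every $\left( \in _{\gamma },\in _{\gamma }\vee q_{\delta }\right)$-fuzzy left ideal is a right ideal: given $x$ I would use $x=\left( xs\right) \left( xt\right)$ and the medial law $\left( 2\right)$ to rewrite $x=\left( xx\right) \left( st\right)$, then use the defining law to obtain $xy=\left( \left( xx\right) \left( st\right) \right) y=\left( y\left( st\right) \right) \left( xx\right)$; applying the left ideal inequality to this product and then to $xx$ yields $\mu (xy)\vee \gamma \geq \mu (x)\wedge \delta$. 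Thus both $\mu$ and $\nu$ may be treated as two-sided.

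With these in hand the two estimates are short. For the left ideal property I would take a factorization $b=yz$ and use law $\left( 4\right)$ to write $ab=a\left( yz\right) =y\left( az\right)$, so that $\left( \mu \circ \nu \right)(ab)\geq \mu (y)\wedge \nu (az)$; since $\nu$ acts as a left ideal, $\nu (az)\vee \gamma \geq \nu (z)\wedge \delta$, and distributing $\vee \gamma$ over the meet gives $\left( \mu \circ \nu \right)(ab)\vee \gamma \geq \mu (y)\wedge \nu (z)\wedge \delta$; the supremum over all factorizations of $b$ finishes this case. For the right ideal property I would take a factorization $a=yz$ and use the medial law to write $ab=\left( yz\right) \left( eb\right) =\left( ye\right) \left( zb\right)$, so that $\left( \mu \circ \nu \right)(ab)\geq \mu (ye)\wedge \nu (zb)$; now $\mu (ye)\vee \gamma \geq \mu (y)\wedge \delta$ because $\mu$ acts as a right ideal and $\nu (zb)\vee \gamma \geq \nu (z)\wedge \delta$ because $\nu$ is a right ideal, giving $\left( \mu \circ \nu \right)(ab)\vee \gamma \geq \mu (y)\wedge \nu (z)\wedge \delta$, and again the supremum over factorizations of $a$ completes it.

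The main obstacle is the right ideal direction, and specifically the second auxiliary fact: the naive rearrangements of $\left( yz\right) b$ (for instance $\left( yz\right) b=\left( bz\right) y$, or $\left( ye\right) \left( zb\right)$) leave the $\mu$-argument with the wrong factor exposed, so the left ideal hypothesis on $\mu$ cannot be applied as needed. The trick that resolves this is to spend weak regularity on rewriting $x$ as $\left( xx\right) \left( st\right)$, which after the defining law exposes the block $xx$ as a right factor; the left ideal inequality then applies to it and the elementary bound $\mu (xx)\vee \gamma \geq \mu (x)\wedge \delta$ closes the gap. Throughout I would keep the $\vee \gamma$/$\wedge \delta$ bookkeeping honest by using $\left( u\wedge v\right) \vee \gamma =\left( u\vee \gamma \right) \wedge \left( v\vee \gamma \right)$ together with the standard two-case chaining of such modular inequalities.
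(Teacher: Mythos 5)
Your proposal is correct, and it reaches the conclusion by a genuinely different route from the paper. The paper works with one fixed factorization $y=lm$ and spends weak regularity inline: it rewrites $xy$ as $\bigl(x\bigl((xr)((te)l)\bigr)\bigr)m$, so that $m$ survives as the right factor (feeding $\nu$ unchanged) while $l$ sits as a nested right factor inside the left block, and then applies the left-ideal inequality for $\mu$ three times in succession; the right-ideal half of the conclusion is dispatched with a bare ``similarly.'' You instead front-load the structural work into two reduction lemmas --- that a right ideal is automatically a left ideal via $ab=(ea)b=(ba)e$, and that under weak regularity a left ideal is a right ideal via $x=(xs)(xt)=(xx)(st)$ and $((xx)(st))y=(y(st))(xx)$ --- after which both halves of the verification collapse to the one-step rearrangements $a(yz)=y(az)$ and $(yz)b=(ye)(zb)$ followed by the routine $\vee\gamma$/$\wedge\delta$ chaining. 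Your version buys modularity and symmetry: the two lemmas are reusable facts about one-sided ideals in weakly regular LA-semigroups with left identity, and the ``similar'' half that the paper omits becomes explicit and trivially checkable. The paper's version avoids stating auxiliary lemmas at the cost of a long and error-prone chain of identities. Both arguments rest on the same ingredients (the left invertive law, the medial law, law (4), and weak regularity), and your bookkeeping with $(u\wedge v)\vee\gamma=(u\vee\gamma)\wedge(v\vee\gamma)$ and $\gamma<\delta$ is exactly what is needed to chain the inequalities, so there is no gap.
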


\begin{proof}
Let $x,y\in S,$ then by definition of weakly regular LA-semigroup there
exist $t,r\in S,$ such that, $x=\left( xr\right) (xt).$ Now,%
\begin{eqnarray*}
\left( \mu \circ \nu \right) (y)\wedge \delta &=&\left(
\bigvee\limits_{y=lm}\{\mu (l)\wedge \nu (m)\}\right) \wedge \delta \\
&=&\left( \bigvee\limits_{y=lm}\mu (l)\wedge \nu (m)\wedge \delta \right) \\
&=&\left( \bigvee\limits_{y=lm}\mu (l)\wedge \delta \wedge \nu (m)\right) .
\end{eqnarray*}%
Now, by using (1),(2),(3) and (4), we have%
\begin{eqnarray*}
xy &=&x(lm)=((xr)(xt))(lm)=((lm)(xt))(xr) \\
&=&((tx)(ml))(xr)=(m((tx)l))(xr) \\
&=&((xr)((tx)l))m=((xr)((tx))(el)))m \\
&=&((xr)((le)(xt)))m)=((xr)(x((le)t)))m \\
&=&(x((xr)((le)t)))m=(x((xr)((te)l)))m.
\end{eqnarray*}%
Now,%
\begin{eqnarray*}
\mu (x((xr)((te)l)))\vee \gamma &=&\left( \left( \mu (x((xr)((te)l)))\vee
\gamma \right) \vee \gamma \right) \vee \gamma \\
&\geq &\left( \left( \mu ((xr)((te)l))\wedge \delta \right) \vee \gamma
\right) \vee \gamma \text{ } \\
&&\text{(because }\mu \text{\ is an \ }\left( \in _{\gamma },\in _{\gamma
}\vee q_{\delta }\right) \text{-fuzzy\ left ideal) } \\
\text{ \ \ \ \ \ \ \ \ \ \ \ \ \ \ \ \ \ \ \ } &=&\left( \left( \mu
((xr)((te)l))\vee \gamma \right) \wedge \delta \right) \vee \gamma \text{\ \ 
} \\
&\geq &\text{ }\left( \left( \mu ((te)l))\wedge \delta \right) \wedge \delta
\right) \vee \gamma \\
&=&\text{ }\left( \mu ((te)l))\wedge \delta \right) \vee \gamma \\
&=&\text{ }\left( \mu ((te)l))\vee \gamma \right) \wedge \delta \\
&\geq &\mu (l))\wedge \delta \wedge \delta \\
&=&\mu (l))\wedge \delta .
\end{eqnarray*}%
Thus,%
\begin{eqnarray*}
\left( \mu \circ \nu \right) (y)\wedge \delta &=&\left(
\bigvee\limits_{y=lm}\mu (l)\wedge \delta \wedge \nu (m)\right) \\
&\leq &\bigvee\limits_{xy=(x((xr)((te)l)))m}\left( \left\{ \mu
((x((xr)((te)l)))m)\vee \gamma \right\} \wedge \nu (m)\right) \\
&\leq &\bigvee\limits_{xy=pq}\left( \left\{ \mu (p)\vee \gamma \right\}
\wedge \nu (q)\right) \\
&=&\left( \bigvee\limits_{xy=pq}\left( \mu (p)\wedge \nu (q)\right) \right)
\vee \gamma \\
&=&\left( \mu \circ \nu \right) (xy)\vee \gamma .
\end{eqnarray*}%
Thus, $\left( \mu \circ \nu \right) (xy)\vee \gamma \geq \left( \mu \circ
\nu \right) (y)\wedge \delta .$ Similarly, we can show that $\left( \mu
\circ \nu \right) (xy)\vee \gamma \geq \left( \mu \circ \nu \right)
(x)\wedge \delta .$ Hence, $\mu \circ \nu $ is an $\left( \in _{\gamma },\in
_{\gamma }\vee q_{\delta }\right) $-fuzzy two sided-ideal of $S.$
\end{proof}

Following example tells us that if $\mu $ and $\nu $ are $\left( \in
_{\gamma },\in _{\gamma }\vee q_{\delta }\right) $-fuzzy ideals of an
LA-semigroup $S,$ then $\mu \circ \nu \npreceq \mu \wedge \nu .$

\begin{example}
Let $S=\{1,2,3,4\}$ be an LA-semigroup with the following multiplication
table.%
\begin{equation*}
\begin{tabular}{l|llll}
${\ast }$ & $1$ & $2$ & $3$ & $4$ \\ \hline
$1$ & $4$ & $4$ & $4$ & $4$ \\ 
$2$ & $3$ & $1$ & $3$ & $1$ \\ 
$3$ & $4$ & $1$ & $4$ & $4$ \\ 
$4$ & $4$ & $4$ & $4$ & $4$%
\end{tabular}%
\end{equation*}

Define fuzzy subsets $\mu $ and $\nu $ of $S$ by

$\mu (1)=0.3,$ $\mu (2)=0.2,$ $\mu (3)=0.6,$ $\mu (4)=0.3.$

and

$\nu (1)=0.4,$ $\nu (2)=0.3,$ $\nu (3)=0.4,$ $\nu (4)=0.5.$

Now,

$U(\mu ;t)=\left\{ 
\begin{array}{c}
S\text{ \ \ \ \ \ \ \ \ \ \ \ \ \ \ \ \ \ \ \ \ \ \ \ \ \ \ \ \ }0<t\leq 0.2
\\ 
\{1,3,4\}\text{ \ \ \ \ \ \ \ \ \ \ \ \ \ \ \ \ \ }0.2<t\leq 0.3 \\ 
\{3\}\text{\ \ \ \ \ \ \ \ \ \ \ \ \ \ \ \ \ \ \ \ \ \ \ \ \ }0.3<t\leq 0.6
\\ 
\Phi \text{ \ \ \ \ \ \ \ \ \ \ \ \ \ \ \ \ \ \ \ \ \ \ \ \ \ \ \ \ \ \ \ \
\ \ \ }0.6<t%
\end{array}%
\right. $

and

$U(\nu ;t)=\left\{ 
\begin{array}{c}
S\text{ \ \ \ \ \ \ \ \ \ \ \ \ \ \ \ \ \ \ \ \ \ \ \ \ \ \ \ \ \ \ }0<t\leq
0.3 \\ 
\{1,3,4\}\text{ \ \ \ \ \ \ \ \ \ \ \ \ \ \ \ \ }0.3<t\leq 0.4 \\ 
\{4\}\text{\ \ \ \ \ \ \ \ \ \ \ \ \ \ \ \ \ \ \ \ \ \ \ \ }0.4<t\leq 0.5 \\ 
\emptyset \text{ \ \ \ \ \ \ \ \ \ \ \ \ \ \ \ \ \ \ \ \ \ \ \ \ \ \ \ \ \ \
\ \ \ \ \ }0.6<t%
\end{array}%
\right. $

Then, $\mu $ and $\nu $ are $\left( \in _{\gamma },\in _{\gamma }\vee
q_{\delta }\right) $-fuzzy ideals of $S$ (by using Theorem 8 part(iv))$.$
Now, $\left( \mu \circ \nu \right) (4)=\dbigvee \{0.2,0.3,0.4,0.5\}=0.5$ and 
$\left( \mu \wedge \nu \right) (4)=\mu (4)\wedge \nu (4)=0.3.$ Clearly, $\mu
\circ \nu \npreceq \mu \wedge \nu .$ Hence, $\mu \circ \nu \leq \mu \wedge
\nu $ is not true in general.
\end{example}

\begin{definition}
Let \ $\mu $ and $\nu $ be the fuzzy subsets of LA-semigroup $S.$ The fuzzy
subsets $\mu ^{\ast },\mu \wedge ^{\ast }\nu ,\mu \vee ^{\ast }\nu $ and $\
\mu \ast \nu $ of $S$ are defined as follows;%
\begin{eqnarray*}
\mu ^{\ast }(x) &=&\left( \mu (x)\vee \gamma \right) \wedge \delta \\
\left( \mu \wedge ^{\ast }\nu \right) (x) &=&\left( \left( \left( \mu \wedge
\nu \right) (x)\right) \vee \gamma \right) \wedge \delta \\
\left( \mu \vee ^{\ast }\nu \right) (x) &=&\left( \left( \left( \mu \vee \nu
\right) (x)\right) \vee \gamma \right) \wedge \delta \\
\left( \mu \ast \nu \right) (x) &=&\left( \left( \left( \mu \circ \nu
\right) (x)\right) \vee \gamma \right) \wedge \delta \text{ for all }x\in S.
\end{eqnarray*}
\end{definition}

\begin{lemma}
Let \ $\mu $ and $\nu $ be the fuzzy subsets of LA-semigroup $S.$Then the
following holds;

$(i)$ $\ \mu \wedge ^{\ast }\nu =\mu ^{\ast }\wedge \nu ^{\ast }.$

$(ii)$ $\mu \vee ^{\ast }\nu =\mu ^{\ast }\vee \nu ^{\ast }.$

$(iii)$ $\mu \ast \nu =\mu ^{\ast }\circ \nu ^{\ast }.$
\end{lemma}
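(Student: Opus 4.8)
The plan is to verify all three identities pointwise: I fix an arbitrary $x \in S$ and reduce each claim to the distributive-lattice structure of $([0,1], \wedge, \vee)$, where $\wedge = \min$ and $\vee = \max$. The two lattice laws I will lean on throughout are the distributivity of $\vee$ over $\wedge$, namely $a \vee (b \wedge c) = (a \vee b) \wedge (a \vee c)$, and its order-dual, together with the idempotence facts $\gamma \vee \gamma = \gamma$ and $\delta \wedge \delta = \delta$.

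For (i), I would expand the left-hand side as $(\mu \wedge^{\ast} \nu)(x) = ((\mu(x) \wedge \nu(x)) \vee \gamma) \wedge \delta$. Applying distributivity of $\vee$ over $\wedge$ gives $(\mu(x) \wedge \nu(x)) \vee \gamma = (\mu(x) \vee \gamma) \wedge (\nu(x) \vee \gamma)$; meeting with $\delta$ and inserting a second copy of $\delta$ via $\delta = \delta \wedge \delta$ rearranges this into $((\mu(x) \vee \gamma) \wedge \delta) \wedge ((\nu(x) \vee \gamma) \wedge \delta) = \mu^{\ast}(x) \wedge \nu^{\ast}(x)$, which is exactly $(\mu^{\ast} \wedge \nu^{\ast})(x)$. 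Part (ii) is the dual computation: expand $(\mu \vee^{\ast} \nu)(x) = ((\mu(x) \vee \nu(x)) \vee \gamma) \wedge \delta$, use distributivity of $\wedge$ over $\vee$ to write $\mu^{\ast}(x) \vee \nu^{\ast}(x) = ((\mu(x) \vee \gamma) \vee (\nu(x) \vee \gamma)) \wedge \delta$, and collapse the repeated $\gamma$ by $\gamma \vee \gamma = \gamma$ to match.

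Part (iii) is the one requiring real work, since the product $\mu \circ \nu$ is itself a supremum over factorizations. The idea is that both unary operations $(-) \vee \gamma$ and $(-) \wedge \delta$ commute with suprema in $[0,1]$: for a nonempty family $\{a_j\}$ one has $(\bigvee_j a_j) \vee \gamma = \bigvee_j (a_j \vee \gamma)$ and $(\bigvee_j a_j) \wedge \delta = \bigvee_j (a_j \wedge \delta)$, the latter being the infinite distributive (frame) law for the unit interval. Writing $(\mu \circ \nu)(x) = \bigvee_{x = yz} (\mu(y) \wedge \nu(z))$, I would push $(-) \vee \gamma$ and then $(-) \wedge \delta$ through the supremum to obtain $(\mu \ast \nu)(x) = \bigvee_{x = yz} (((\mu(y) \wedge \nu(z)) \vee \gamma) \wedge \delta)$. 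Each summand is now exactly the expression treated in part (i), so it equals $\mu^{\ast}(y) \wedge \nu^{\ast}(z)$, and taking the supremum over all factorizations $x = yz$ yields $(\mu^{\ast} \circ \nu^{\ast})(x)$, as required.

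The main obstacle is the boundary case where $x$ admits no factorization $x = yz$. There the product is defined to be $0$, so the left-hand side of (iii) reads $(0 \vee \gamma) \wedge \delta = \gamma$, whereas $(\mu^{\ast} \circ \nu^{\ast})(x) = 0$; moreover the commutation-with-suprema step breaks down because the index set is empty. This is resolved in the setting of interest: when $S$ has a left identity $e$ — the standing hypothesis of the characterization theorems in this section — every $x = ex$ is a product, so the empty-factorization case never occurs and the argument above applies verbatim. I would therefore either invoke this hypothesis or fix the convention under which the boundary values are made to agree.
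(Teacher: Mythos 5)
Your proof follows essentially the same route as the paper's: pointwise expansion, the distributive laws of the lattice $[0,1]$ for parts (i) and (ii), and commuting the operations $(-)\vee \gamma$ and $(-)\wedge \delta$ through the supremum over factorizations $x=yz$ for part (iii). The only substantive difference is that you explicitly flag the empty-factorization boundary case in (iii), which the paper's proof passes over in silence; your resolution via the left identity (or a stated convention) is a sensible and welcome patch.
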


\begin{proof}
Let $x\in S.$ Then,

(i)%
\begin{eqnarray*}
\left( \mu \wedge ^{\ast }\nu \right) (x) &=&\left( \left( \left( \mu \wedge
\nu \right) (x)\right) \vee \gamma \right) \wedge \delta \\
&=&\left( \left( \mu (x)\wedge \nu (x)\right) \vee \gamma \right) \wedge
\delta \\
&=&\left( \left( \mu (x)\vee \gamma \right) \wedge \left( \nu (x)\vee \gamma
\right) \right) \wedge \delta \\
&=&\left( \left( \mu (x)\vee \gamma \right) \wedge \delta )\wedge (\left(
\nu (x)\vee \gamma \right) \right) \wedge \delta ) \\
&=&\mu ^{\ast }(x)\wedge \nu ^{\ast }(x) \\
&=&\left( \mu ^{\ast }\wedge \nu ^{\ast }\right) (x).
\end{eqnarray*}%
Hence, $\ \mu \wedge ^{\ast }\nu =\mu ^{\ast }\wedge \nu ^{\ast }.$

(ii) 
\begin{eqnarray*}
\left( \mu \vee ^{\ast }\nu \right) (x) &=&\left( \left( \left( \mu \vee \nu
\right) (x)\right) \vee \gamma \right) \wedge \delta \\
&=&\left( \left( \mu (x)\vee \nu (x)\right) \vee \gamma \right) \wedge \delta
\\
&=&\left( \left( \mu (x)\vee \gamma \right) \vee \left( \nu (x)\vee \gamma
\right) \right) \wedge \delta \\
&=&\left( \left( \mu (x)\vee \gamma \right) \wedge \delta )\vee (\left( \nu
(x)\vee \gamma \right) \right) \wedge \delta ) \\
&=&\mu ^{\ast }(x)\vee \nu ^{\ast }(x) \\
&=&\left( \mu ^{\ast }\vee \nu ^{\ast }\right) (x).
\end{eqnarray*}%
Hence, $\ \mu \vee ^{\ast }\nu =\mu ^{\ast }\vee \nu ^{\ast }.$

(iii) 
\begin{eqnarray*}
\left( \mu \ast \nu \right) (x) &=&\left( \left( \left( \mu \circ \nu
\right) (x)\right) \vee \gamma \right) \wedge \delta \\
&=&\left( \left( \bigvee\limits_{x=yz}\{\mu (y)\wedge \nu (z)\}\right) \vee
\gamma \right) \wedge \delta \\
&=&\left( \bigvee\limits_{x=yz}\{\left( \mu (y)\vee \gamma \right) \wedge
\left( \nu (z)\vee \gamma \right) \}\right) \wedge \delta \\
&=&\bigvee\limits_{x=yz}\{\left( \left( \mu (y)\vee \gamma \right) \wedge
\delta \right) \wedge \left( \left( \nu (z)\vee \gamma \right) \wedge \delta
\right) \} \\
&=&\left( \bigvee\limits_{x=yz}\{\mu ^{\ast }(y)\wedge \nu ^{\ast
}(z)\}\right) \\
&=&\left( \mu ^{\ast }\circ \nu ^{\ast }\right) (x).
\end{eqnarray*}%
Hence,$\ \mu \ast \nu =\mu ^{\ast }\circ \nu ^{\ast }.$
\end{proof}

\begin{lemma}
Let $L$ and $R$ be any two non-empty subsets of LA-semigroup $S.$ Then the
followin hold.

$(i)$ $\chi _{L}\wedge ^{\ast }\chi _{R}=\chi _{L\cap R}^{\ast }.$

$(ii)\chi _{L}\vee ^{\ast }\chi _{R}=\chi _{L\cup R}^{\ast }.$

$(iii)$ $\chi _{L}\ast \chi _{R}=\chi _{LR}^{\ast }.$
\end{lemma}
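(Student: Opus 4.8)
The plan is to reduce all three parts to the preceding Lemma together with three elementary facts about characteristic functions, namely that they convert the set operations $\cap$, $\cup$ and the product $LR$ into the pointwise operations $\wedge$, $\vee$ and $\circ$. Concretely, I would first record the identities $\chi_L \wedge \chi_R = \chi_{L\cap R}$, $\chi_L \vee \chi_R = \chi_{L\cup R}$ and $\chi_L \circ \chi_R = \chi_{LR}$, and then substitute them into the definitions of $\wedge^{\ast}$, $\vee^{\ast}$ and $\ast$ from the definition preceding the statement.

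For parts $(i)$ and $(ii)$ the argument is a one-line substitution. Since $(\chi_L \wedge \chi_R)(x) = \chi_L(x) \wedge \chi_R(x) = \chi_{L\cap R}(x)$ for every $x\in S$, the definition of $\wedge^{\ast}$ gives $(\chi_L \wedge^{\ast} \chi_R)(x) = ((\chi_{L\cap R}(x)) \vee \gamma)\wedge\delta = \chi_{L\cap R}^{\ast}(x)$, which is exactly $(i)$. The same computation with $\vee$ in place of $\wedge$, using $(\chi_L \vee \chi_R)(x) = \chi_{L\cup R}(x)$, yields $(ii)$. Here no case analysis is needed beyond the trivial verification of the two pointwise identities for $\wedge$ and $\vee$.

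The only substantive step is the convolution identity $\chi_L \circ \chi_R = \chi_{LR}$ required for $(iii)$, which I would establish by the standard two-sided argument. If $x\in LR$, choose $l\in L$ and $r\in R$ with $x = lr$; then the defining supremum $(\chi_L\circ\chi_R)(x)=\bigvee_{x=yz}\{\chi_L(y)\wedge\chi_R(z)\}$ contains the term $\chi_L(l)\wedge\chi_R(r)=1$, so the value is $1=\chi_{LR}(x)$. If $x\notin LR$, then in every factorization $x=yz$ at least one of $y\in L$, $z\in R$ fails (otherwise $x$ would lie in $LR$), so each term $\chi_L(y)\wedge\chi_R(z)$ is $0$; and if $x$ admits no factorization at all, the product is $0$ by the convention in the definition of $\circ$. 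Hence $(\chi_L\circ\chi_R)(x)=0=\chi_{LR}(x)$. Granting this, $(iii)$ follows by substituting into the definition of $\ast$: $(\chi_L\ast\chi_R)(x)=((\chi_L\circ\chi_R)(x)\vee\gamma)\wedge\delta=(\chi_{LR}(x)\vee\gamma)\wedge\delta=\chi_{LR}^{\ast}(x)$.

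I expect the convolution identity $\chi_L\circ\chi_R=\chi_{LR}$ to be the main (indeed the only) point requiring care, since it is where the definition of the product of fuzzy subsets genuinely enters, whereas the rest is bookkeeping with the $(\cdot)^{\ast}$ operation. One could alternatively route $(iii)$ through part $(iii)$ of the previous Lemma, writing $\chi_L\ast\chi_R=\chi_L^{\ast}\circ\chi_R^{\ast}$, but that reformulation still needs the same factorization analysis, so I prefer to prove $\chi_L\circ\chi_R=\chi_{LR}$ directly and then invoke the definition of $\ast$.
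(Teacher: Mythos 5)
Your proof is correct. The paper actually states this lemma without any proof, and your argument — verifying the pointwise identities $\chi_L\wedge\chi_R=\chi_{L\cap R}$ and $\chi_L\vee\chi_R=\chi_{L\cup R}$, establishing $\chi_L\circ\chi_R=\chi_{LR}$ by the two-sided factorization analysis (including the no-factorization convention), and then substituting into the definitions of $\wedge^{\ast}$, $\vee^{\ast}$ and $\ast$ — is exactly the standard argument the authors are implicitly relying on.
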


\begin{lemma}
A non-empty subset $L$ of $S$ is a left(right) ideal of $S$ if and only if $%
\chi _{L}^{\ast }$ is an $\left( \in _{\gamma },\in _{\gamma }\vee q_{\delta
}\right) $-fuzzy left(right) ideal of $S.$
\end{lemma}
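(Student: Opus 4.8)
The plan is to prove both implications by first computing $\chi_{L}^{\ast}$ explicitly and then reducing each direction to a two-valued case analysis. Since $\gamma <\delta$, the definition $\chi_{L}^{\ast}(x)=(\chi_{L}(x)\vee \gamma )\wedge \delta$ gives $\chi_{L}^{\ast}(x)=\delta$ when $x\in L$ and $\chi_{L}^{\ast}(x)=\gamma$ when $x\notin L$; thus $\chi_{L}^{\ast}$ attains only the two values $\gamma$ and $\delta$. The whole argument rests on this observation together with the inequality characterization of an $\left( \in _{\gamma },\in _{\gamma }\vee q_{\delta }\right) $-fuzzy left ideal already employed in the proof of Theorem 3, namely that $\mu $ is such an ideal exactly when $\mu (xy)\vee \gamma \geq \mu (y)\wedge \delta $ for all $x,y\in S$ (and symmetrically $\mu (xy)\vee \gamma \geq \mu (x)\wedge \delta $ in the right case).

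For the forward direction I would assume $SL\subseteq L$ and check $\chi_{L}^{\ast}(xy)\vee \gamma \geq \chi_{L}^{\ast}(y)\wedge \delta $ for arbitrary $x,y\in S$. If $y\notin L$ the right-hand side collapses to $\gamma \wedge \delta =\gamma $, which the left-hand side dominates automatically. If $y\in L$ then $xy\in SL\subseteq L$, so $\chi_{L}^{\ast}(xy)=\chi_{L}^{\ast}(y)=\delta $ and the inequality reads $\delta \geq \delta $. This disposes of the left-ideal case, and the right-ideal case is identical after replacing $y$ by $x$ and invoking $LS\subseteq L$.

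For the converse I would assume $\chi_{L}^{\ast}$ satisfies the defining inequality and deduce $SL\subseteq L$. Fixing $s\in S$ and $a\in L$, I have $\chi_{L}^{\ast}(a)=\delta $, so the hypothesis yields $\chi_{L}^{\ast}(sa)\vee \gamma \geq \delta \wedge \delta =\delta $. The one genuinely delicate step is converting this fuzzy inequality back into crisp membership: since $\chi_{L}^{\ast}(sa)\in \{\gamma ,\delta \}$ and $\gamma <\delta $, the value $\gamma $ would force $\chi_{L}^{\ast}(sa)\vee \gamma =\gamma <\delta $, a contradiction, so $\chi_{L}^{\ast}(sa)=\delta $, i.e. $sa\in L$. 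Hence $SL\subseteq L$, and the right-ideal statement follows by the symmetric computation with $a\in L$ and $as$. I expect this collapse from a fuzzy inequality to a crisp membership to be the crux of the proof, and I would stress that it uses the strict inequality $\gamma <\delta $ essentially, since that is precisely what separates the two values $\chi_{L}^{\ast}$ can take and lets $\chi_{L}^{\ast}(sa)\vee \gamma \geq \delta $ pin down $sa\in L$.
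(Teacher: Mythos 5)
Your proof is correct and takes essentially the same route as the paper's: the forward direction verifies the inequality $\chi _{L}^{\ast }(xy)\vee \gamma \geq \chi _{L}^{\ast }(y)\wedge \delta $ (the paper does this by a chain of lattice identities rather than your explicit two-valued case split), and the converse pins down $\chi _{L}^{\ast }(ab)=\delta $ for $b\in L$ to conclude $ab\in L$. The only cosmetic difference is that the paper runs the converse through the fuzzy-point form of the definition ($b_{\delta }\in _{\gamma }\chi _{L}^{\ast }$ implies $\left( ab\right) _{\delta }\in _{\gamma }\vee q_{\delta }\chi _{L}^{\ast }$) before unpacking it to the same two-valued conclusion you reach directly.
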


\begin{proof}
Suppose that $L$ be a left ideal of $S.$ Now,%
\begin{eqnarray*}
\chi _{L}^{\ast }(ab)\vee \gamma &=&\left( \left( \chi _{L}(ab)\vee \gamma
\right) \wedge \delta \right) \vee \gamma \\
&\geq &\left( \left( \chi _{L}(b)\wedge \delta \right) \wedge \delta \right)
\vee \gamma \\
&=&\left( \chi _{L}(b)\wedge \delta \right) \vee \gamma \\
&=&\left( \chi _{L}(b)\vee \gamma \right) \wedge \delta \\
&=&\chi _{L}^{\ast }(b)\wedge \delta .
\end{eqnarray*}%
Hence, $\chi _{L}^{\ast }$ is an $\left( \in _{\gamma },\in _{\gamma }\vee
q_{\delta }\right) $-fuzzy left(right) ideal of $S.$

Conversely, assume that $\chi _{L}^{\ast }$ is an $\left( \in _{\gamma },\in
_{\gamma }\vee q_{\delta }\right) $-fuzzy left ideal of $S.$ To show that $L$
is a left ideal of $S,$ let $b\in L.$ Then, 
\begin{equation*}
\chi _{L}^{\ast }(b)=\left( \chi _{L}(b)\vee \gamma \right) \wedge \delta
=\chi _{L}(b)\wedge \delta =\delta .
\end{equation*}%
So, $b_{\delta }\in _{\gamma }\chi _{L}^{\ast }.$ Now, $\left( ab\right)
_{\delta }\in _{\gamma }\vee q_{\delta }\chi _{L}^{\ast }$ (because $\chi
_{L}^{\ast }$ is an $\left( \in _{\gamma },\in _{\gamma }\vee q_{\delta
}\right) $-fuzzy left ideal of $S$), which implies that $\left( ab\right)
_{\delta }\in _{\gamma }\chi _{L}^{\ast }$ \ or $\ \left( ab\right) _{\delta
}q_{\delta }\chi _{L}^{\ast }.$ Hence, $\chi _{L}^{\ast }(ab)\geq \delta
>\gamma $ or $\chi _{L}^{\ast }(ab)+\delta >2\delta .$ Now, if $\chi
_{L}^{\ast }(ab)+\delta >2\delta ,$ then $\chi _{L}^{\ast }(ab)>\delta .$
Thus, $\chi _{L}^{\ast }(ab)\geq \delta .$ So, we have $\chi _{L}^{\ast
}(ab)=\delta .$ Hence, $ab\in L.$ Thus, $L$ is left ideal of $S.$
\end{proof}

\begin{lemma}
A non-empty subset $Q$ of $S$ is a quasi-ideal of $S$ if and only if $\chi
_{Q}^{\ast }$ is an $\left( \in _{\gamma },\in _{\gamma }\vee q_{\delta
}\right) $-fuzzy quasi-ideal ideal of $S.$
\end{lemma}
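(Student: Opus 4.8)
The plan is to rewrite the defining inequality for an $\left( \in _{\gamma },\in _{\gamma }\vee q_{\delta }\right)$-fuzzy quasi-ideal entirely in terms of the set $QS\cap SQ$, and then simply read off the containment $QS\cap SQ\subseteq Q$ from a two-valued pointwise inequality. The basic observation driving everything is that $\chi _{Q}^{\ast }$ takes only two values: since $\gamma <\delta$, we have $\chi _{Q}^{\ast }(x)=\left( \chi _{Q}(x)\vee \gamma \right) \wedge \delta =\delta$ when $x\in Q$ and $\chi _{Q}^{\ast }(x)=\gamma$ when $x\notin Q$. In particular $\chi _{Q}^{\ast }\leq \delta$ everywhere, while $\chi _{S}^{\ast }$ is the constant function $\delta$.

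First I would evaluate the two fuzzy products in the definition. Because $\chi _{Q}^{\ast }(y)\leq \delta \leq 1$ for every $y$, composing $\chi _{Q}^{\ast }$ against the constant $1$ gives the same value as composing it against $\chi _{S}^{\ast }=\delta$; that is,
\[
\left( \chi _{Q}^{\ast }\circ 1\right) (x)=\bigvee_{x=yz}\chi _{Q}^{\ast }(y)=\left( \chi _{Q}^{\ast }\circ \chi _{S}^{\ast }\right) (x).
\]
By Lemma 1(iii) and Lemma 2(iii) we have $\chi _{Q}^{\ast }\circ \chi _{S}^{\ast }=\chi _{Q}\ast \chi _{S}=\chi _{QS}^{\ast }$, and symmetrically $1\circ \chi _{Q}^{\ast }=\chi _{SQ}^{\ast }$. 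Combining these with Lemma 1(i) and Lemma 2(i), which give $\chi _{QS}^{\ast }\wedge \chi _{SQ}^{\ast }=\chi _{QS\cap SQ}^{\ast }$, and using $\chi _{Q}^{\ast }\vee \gamma =\chi _{Q}^{\ast }$ together with $\chi _{QS\cap SQ}^{\ast }\wedge \delta =\chi _{QS\cap SQ}^{\ast }$, the defining inequality collapses to the single pointwise condition
\[
\chi _{Q}^{\ast }(x)\geq \chi _{QS\cap SQ}^{\ast }(x)\quad \text{for all }x\in S.
\]

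It then remains to show that this two-valued inequality is equivalent to $QS\cap SQ\subseteq Q$. For the converse direction, assume the inequality holds and take $x\in QS\cap SQ$; then $\chi _{QS\cap SQ}^{\ast }(x)=\delta$, which forces $\chi _{Q}^{\ast }(x)=\delta$ (its only values being $\gamma <\delta$ and $\delta$, and it cannot exceed $\delta$), hence $x\in Q$. For the forward direction, if $Q$ is a quasi-ideal then for each $x$ either $x\in QS\cap SQ$, in which case $x\in Q$ and both sides equal $\delta$, or $x\notin QS\cap SQ$, in which case the right-hand side is $\gamma$ while the left-hand side is at least $\gamma$; either way the inequality holds.

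The only step requiring genuine care is the first one, namely identifying $\chi _{Q}^{\ast }\circ 1$ with $\chi _{QS}^{\ast }$ even though the definition uses the constant $1$ rather than $\chi _{S}^{\ast }$; this is resolved precisely by the bound $\chi _{Q}^{\ast }\leq \delta$, which makes the two compositions coincide. After that, the argument is pure bookkeeping with $\vee \gamma$ and $\wedge \delta$ combined with the two-valuedness of $\chi _{Q}^{\ast }$, and no further obstacle arises. A direct argument bypassing Lemmas 1 and 2 is equally available: one computes $\left( \chi _{Q}^{\ast }\circ 1\right) (x)$ and $\left( 1\circ \chi _{Q}^{\ast }\right) (x)$ by hand, obtaining $\delta$ exactly when $x\in QS$, respectively $x\in SQ$, and $\gamma$ otherwise, and then concludes as above.
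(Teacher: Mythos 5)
The paper states this lemma without proof, so there is no in-text argument to compare against; judged on its own merits, your proof is correct and is the natural functional analogue of the paper's proof of the corresponding lemma for left ideals (which instead uses a fuzzy-point argument for the converse, an option not available here since the quasi-ideal condition is stated as a pointwise inequality rather than via fuzzy points). Your reduction of $\chi _{Q}^{\ast }(x)\vee \gamma \geq \left( \chi _{Q}^{\ast }\circ 1\right) (x)\wedge \left( 1\circ \chi _{Q}^{\ast }\right) (x)\wedge \delta $ to the two-valued inequality $\chi _{Q}^{\ast }\geq \chi _{QS\cap SQ}^{\ast }$, and then to $QS\cap SQ\subseteq Q$, is sound. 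One small caveat: the identities $\chi _{Q}^{\ast }\circ 1=\chi _{QS}^{\ast }$ and $1\circ \chi _{Q}^{\ast }=\chi _{SQ}^{\ast }$ fail at any $x$ that admits no factorization $x=yz$, since there the left-hand sides are $0$ by the definition of $\circ $ while the right-hand sides equal $\gamma $ (this same edge case lurks in the paper's Lemma~1(iii)). This does not damage your argument, because at such points the original defining inequality reads $\chi _{Q}^{\ast }(x)\geq 0$ and your collapsed inequality reads $\chi _{Q}^{\ast }(x)\geq \gamma $, both of which hold automatically; but it would be cleaner to phrase the collapse as an equivalence of conditions rather than an equality of functions, or to restrict attention to $x\in QS\cap SQ$ from the outset as your closing ``direct argument'' effectively does.
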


\begin{proposition}
Let $\mu $ be an $\left( \in _{\gamma },\in _{\gamma }\vee q_{\delta
}\right) $-fuzzy LA-subsemigroup of $S$. Then, $\mu ^{\ast }$ is a fuzzy
LA-subsemigroup of $S.$
\end{proposition}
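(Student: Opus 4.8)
The plan is to reduce everything to the numerical inequality that characterizes an $\left(\in_{\gamma},\in_{\gamma}\vee q_{\delta}\right)$-fuzzy LA-subsemigroup. Unwinding the point-based definition in the standard way, $\mu$ being such a subsemigroup is equivalent to the condition
\[
\mu(xy)\vee\gamma\geq\mu(x)\wedge\mu(y)\wedge\delta\qquad\text{for all }x,y\in S,
\]
and this is the only property of $\mu$ I intend to use. The target is the ordinary inequality $\mu^{\ast}(xy)\geq\mu^{\ast}(x)\wedge\mu^{\ast}(y)$, where $\mu^{\ast}(x)=\left(\mu(x)\vee\gamma\right)\wedge\delta$.

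First I would rewrite the right-hand side. Using distributivity of $\wedge$ over $\vee$ in the lattice $[0,1]$ together with $\gamma<\delta$, one gets
\[
\mu^{\ast}(x)\wedge\mu^{\ast}(y)=\left(\mu(x)\vee\gamma\right)\wedge\left(\mu(y)\vee\gamma\right)\wedge\delta=\left(\left(\mu(x)\wedge\mu(y)\right)\vee\gamma\right)\wedge\delta.
\]
Writing $m=\mu(x)\wedge\mu(y)$, the goal becomes $\left(\mu(xy)\vee\gamma\right)\wedge\delta\geq\left(m\vee\gamma\right)\wedge\delta$, and the identity $\left(m\vee\gamma\right)\wedge\delta=\left(m\wedge\delta\right)\vee\gamma$ (again from $\gamma\leq\delta$) lets me target the form $\mu^{\ast}(xy)\geq\left(m\wedge\delta\right)\vee\gamma$.

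Next I would feed in the defining inequality $\mu(xy)\vee\gamma\geq m\wedge\delta$. Meeting both sides with $\delta$ gives $\mu^{\ast}(xy)=\left(\mu(xy)\vee\gamma\right)\wedge\delta\geq m\wedge\delta$, while trivially $\mu^{\ast}(xy)\geq\gamma\wedge\delta=\gamma$. Since $\mu^{\ast}(xy)$ thus dominates both $m\wedge\delta$ and $\gamma$, it dominates their join, i.e.\ $\mu^{\ast}(xy)\geq\left(m\wedge\delta\right)\vee\gamma=\mu^{\ast}(x)\wedge\mu^{\ast}(y)$, which is exactly what is required.

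I do not expect a genuine obstacle here: the argument is entirely lattice bookkeeping in $[0,1]$. The one place to be careful is the repeated shuttling between the two truncation operations $(\cdot)\vee\gamma$ and $(\cdot)\wedge\delta$; each simplification silently uses $\gamma<\delta$ (so that $\gamma\wedge\delta=\gamma$ and $\gamma\vee\delta=\delta$), and forgetting this is the only realistic source of error. It is also worth double-checking that the point-based definition really is equivalent to the displayed inequality, since that equivalence is what makes the whole reduction legitimate.
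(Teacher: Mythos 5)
Your proof is correct and follows essentially the same route as the paper: reduce to the functional characterization $\mu(xy)\vee\gamma\geq\mu(x)\wedge\mu(y)\wedge\delta$ and then carry out lattice bookkeeping in the chain $[0,1]$. In fact your version is more careful than the paper's own computation, which asserts the equality $\mu(a)\wedge\mu(b)\wedge\delta=\left(\mu(a)\vee\gamma\right)\wedge\left(\mu(b)\vee\gamma\right)\wedge\delta$ --- false whenever $\mu(a)$ or $\mu(b)$ drops below $\gamma$, where the left side is strictly smaller --- whereas your additional observation that $\mu^{\ast}(xy)\geq\gamma$, combined with $\mu^{\ast}(xy)\geq\mu(x)\wedge\mu(y)\wedge\delta$, is exactly what is needed to close that gap and reach $\mu^{\ast}(x)\wedge\mu^{\ast}(y)=\left(\mu(x)\wedge\mu(y)\wedge\delta\right)\vee\gamma$.
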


\begin{proof}
Let $\mu $ be an $\left( \in _{\gamma },\in _{\gamma }\vee q_{\delta
}\right) $-fuzzy LA-subsemigroup of $S.$ Then, we have 
\begin{equation*}
\mu (ab)\vee \gamma \geq \mu (a)\wedge \mu (b)\wedge \delta ,\text{ for all }%
a,b\in S.
\end{equation*}%
Now,%
\begin{eqnarray*}
\mu ^{\ast }(ab) &=&\left( \mu (ab)\vee \gamma \right) \wedge \delta \\
&\geq &\left( \mu (a)\wedge \mu (b)\wedge \delta \right) \wedge \delta \\
&=&\left( \left( \mu (a)\vee \gamma \right) \wedge \left( \mu (b)\vee \gamma
\right) \wedge \delta \right) \\
&=&\left( \left( \mu (a)\vee \gamma \right) \wedge \delta \right) \wedge
(\left( \mu (b)\vee \gamma \right) \wedge \delta ) \\
&=&\mu ^{\ast }(a)\wedge \mu ^{\ast }(b).
\end{eqnarray*}%
Hence, $\mu ^{\ast }$ is a fuzzy LA-subsemigroup of $S.$
\end{proof}

\begin{theorem}
\label{TH4}The following assertions are equivalent for an LA-semigroup $S.$

(i) $S$ is regular.

(ii) $\mu \wedge ^{\ast }\nu =\mu \ast \nu $ for every $\left( \in _{\gamma
},\in _{\gamma }\vee q_{\delta }\right) $-fuzzy \ right ideal $\mu $ and
every $\left( \in _{\gamma },\in _{\gamma }\vee q_{\delta }\right) $-fuzzy \
left ideal $\nu $ of $S.$
\end{theorem}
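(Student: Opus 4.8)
The plan is to prove the two implications separately. For $(i)\Rightarrow(ii)$ I would argue pointwise, splitting the desired equality into two inequalities, only one of which uses regularity; for $(ii)\Rightarrow(i)$ I would feed characteristic functions into $(ii)$, collapse everything with the computational lemmas already proved, and so reduce the fuzzy identity to a crisp identity between one-sided ideals, from which regularity follows by the known crisp characterisation (Theorem \ref{TH3}).

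For $(i)\Rightarrow(ii)$, fix $a\in S$. The inclusion $\mu\ast\nu\le\mu\wedge^{\ast}\nu$ should hold for any right ideal $\mu$ and left ideal $\nu$ with no use of regularity: for each factorisation $a=pq$ the defining inequalities $\mu(pq)\vee\gamma\ge\mu(p)\wedge\delta$ and $\nu(pq)\vee\gamma\ge\nu(q)\wedge\delta$ give $(\mu(p)\wedge\nu(q))\wedge\delta\le(\mu(a)\vee\gamma)\wedge(\nu(a)\vee\gamma)$, and taking the supremum over factorisations, then the cap $\wedge\delta$ and floor $\vee\gamma$ built into $\ast$, yields $(\mu\ast\nu)(a)\le((\mu(a)\wedge\nu(a))\vee\gamma)\wedge\delta=(\mu\wedge^{\ast}\nu)(a)$. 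For the reverse inclusion I would use regularity: write $a=(ax)a$, so that $(\mu\circ\nu)(a)\ge\mu(ax)\wedge\nu(a)$. Since $\mu$ is a right ideal, $\mu(ax)\vee\gamma\ge\mu(a)\wedge\delta$, and because $\mu(ax)\vee\gamma\ge\gamma$ this upgrades to $\mu(ax)\vee\gamma\ge(\mu(a)\vee\gamma)\wedge\delta$; substituting gives $(\mu\ast\nu)(a)\ge(\mu(a)\vee\gamma)\wedge(\nu(a)\vee\gamma)\wedge\delta=(\mu\wedge^{\ast}\nu)(a)$, and the two inclusions give the equality.

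For $(ii)\Rightarrow(i)$, let $R$ be an arbitrary right ideal and $L$ an arbitrary left ideal of $S$. By the lemma that $\chi_{A}^{\ast}$ is an $(\in_{\gamma},\in_{\gamma}\vee q_{\delta})$-fuzzy left (right) ideal exactly when $A$ is, the pair $(\chi_{R}^{\ast},\chi_{L}^{\ast})$ is admissible in $(ii)$. Applying $\mu\wedge^{\ast}\nu=\mu^{\ast}\wedge\nu^{\ast}$ and $\mu\ast\nu=\mu^{\ast}\circ\nu^{\ast}$, together with idempotency $\chi_{A}^{\ast\ast}=\chi_{A}^{\ast}$ and the identities $\chi_{R}^{\ast}\wedge\chi_{L}^{\ast}=\chi_{R\cap L}^{\ast}$ and $\chi_{R}^{\ast}\circ\chi_{L}^{\ast}=\chi_{RL}^{\ast}$, the hypothesis $\chi_{R}^{\ast}\wedge^{\ast}\chi_{L}^{\ast}=\chi_{R}^{\ast}\ast\chi_{L}^{\ast}$ collapses to $\chi_{R\cap L}^{\ast}=\chi_{RL}^{\ast}$. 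Because $\chi_{A}^{\ast}$ takes the value $\delta$ on $A$ and $\gamma$ off $A$ and $\gamma<\delta$, the function $\chi_{A}^{\ast}$ determines $A$, so $R\cap L=RL$ for every right ideal $R$ and left ideal $L$. This is precisely the crisp condition, and Theorem \ref{TH3} then gives that $S$ is regular.

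I expect the only real difficulty to be bookkeeping with the two thresholds rather than any structural subtlety: each estimate must survive the simultaneous floor $(\,\cdot\,)\vee\gamma$ and cap $(\,\cdot\,)\wedge\delta$, and the inequalities close up exactly because of the distributive identity $(\mu(a)\wedge\nu(a))\vee\gamma=(\mu(a)\vee\gamma)\wedge(\nu(a)\vee\gamma)$ and the upgrade $\mu(ax)\vee\gamma\ge(\mu(a)\vee\gamma)\wedge\delta$ used above. I would also flag that the final appeal to Theorem \ref{TH3} carries that theorem's standing hypotheses (weakly regular with left identity); if Theorem \ref{TH4} is intended for an arbitrary LA-semigroup, the last step should instead invoke the purely crisp equivalence ``$S$ is regular if and only if $R\cap L=RL$ for every right ideal $R$ and left ideal $L$,'' which is exactly what the characteristic-function computation produces.
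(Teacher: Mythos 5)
Your proof is correct and follows essentially the same route as the paper's: the same two pointwise inequalities for (i)$\implies$(ii) (only the lower bound via $a=(ax)a$ and the right-ideal inequality $\mu(ax)\vee\gamma\geq\mu(a)\wedge\delta$ uses regularity, the upper bound holding for any right/left ideal pair), and the same reduction of (ii) to the crisp identity $R\cap L=RL$ by feeding characteristic functions into the hypothesis and collapsing with the computational lemmas. Your closing caveat is well taken: the paper's own proof of (ii)$\implies$(i) appeals to Theorem \ref{TH1}, whose standing hypotheses (weakly regular with left identity) are not part of the statement of Theorem \ref{TH4}, so either those hypotheses should be added or the final step should cite a crisp characterization of regularity valid for arbitrary LA-semigroups.
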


\begin{proof}
(i)$\implies $ (ii) Let $\mu $ be an $\left( \in _{\gamma },\in _{\gamma
}\vee q_{\delta }\right) $-fuzzy \ right ideal and $\nu $ be an $\ \left(
\in _{\gamma },\in _{\gamma }\vee q_{\delta }\right) $-fuzzy \ left ideal of 
$S.$ Let $a\in S.$ Then, there exists $x\in S,$ such that, $a=\left(
ax\right) a.$ Now, 
\begin{eqnarray*}
\left( \mu \ast \nu \right) (a) &=&\left( \left( \left( \mu \nu \right)
(a)\right) \vee \gamma \right) \wedge \delta \\
&=&\left( \left( \tbigvee\limits_{a=xy}\{\mu (x)\wedge \nu (y)\}\right) \vee
\gamma \right) \wedge \delta \\
&\geq &\left( \left( \{\mu (ax)\wedge \nu (x)\}\right) \vee \gamma \right)
\wedge \delta \\
&&(\left( (\mu (ax)\vee \gamma \right) \wedge \nu (x))\vee \gamma )\wedge
\delta \\
&\geq &(\left( (\mu (x)\wedge \delta \right) \wedge \nu (x))\vee \gamma
)\wedge \delta \\
&=&\left( \left( \mu (x)\wedge \nu (x)\right) \vee \gamma \right) \wedge
\delta \\
&=&\left( \left( \mu (x)\wedge \nu (x)\right) \vee \gamma \right) \wedge
\delta \\
&=&\left( \mu \wedge ^{\ast }\nu \right) (x).
\end{eqnarray*}%
Thus, $\left( \mu \ast \nu \right) \geq \mu \wedge ^{\ast }\nu .$ Now,

\begin{eqnarray*}
\left( \mu \ast \nu \right) (a) &=&\left( \left( \left( \mu \nu \right)
(a)\right) \vee \gamma \right) \wedge \delta \\
&=&\left( \left( \tbigvee\limits_{a=xy}\{\mu (x)\wedge \nu (y)\}\right) \vee
\gamma \right) \wedge \delta \\
&=&\left( \tbigvee\limits_{a=xy}\left( \left( \mu (x)\wedge \delta \right)
\wedge \left( \nu (y)\wedge \delta \right) \right) \vee \gamma \right)
\wedge \delta \\
&\leq &\left( \tbigvee\limits_{a=xy}\left( \left( \mu (xy)\vee \gamma
\right) \wedge \left( \nu (xy)\vee \gamma \right) \right) \vee \gamma
\right) \wedge \delta \\
&=&\left( \tbigvee\limits_{a=xy}\mu (xy)\wedge \nu (xy)\vee \gamma \right)
\wedge \delta \\
&=&\left( \mu (a)\wedge \nu (a)\vee \gamma \right) \wedge \delta \\
&=&\ \left( \mu \wedge ^{\ast }\nu \right) (a).
\end{eqnarray*}%
Therefore, $\mu \ast \nu \leq \mu \wedge ^{\ast }\nu .$ Hence $\mu \wedge
^{\ast }\nu =\mu ^{\ast }\nu ^{\ast }.$

(ii)$\implies $(i) Let $R$ be the right ideal and $L$ be the left ideal of $%
S.$ Then, $\chi _{R}^{\ast }$ and $\chi _{L}^{\ast }$ are $\left( \in
_{\gamma },\in _{\gamma }\vee q_{\delta }\right) $-fuzzy\ right ideal and $%
\left( \in _{\gamma },\in _{\gamma }\vee q_{\delta }\right) $-fuzzy \ left
ideal of $S,$ respectively (by corollary 3). Now, by hypothesis, $\chi
_{RL}^{\ast }=\chi _{R}\ast \chi _{L}=\chi _{R}\wedge ^{\ast }\chi _{L}=\chi
_{R\cap L}^{\ast }$. Thus, $RL=R\cap L.$ Hence, it follows from THeorem \ref%
{TH1}, $S$ is regular.
\end{proof}

\begin{theorem}
For a weakly regular LA-semigroup $S$ with left identity $e,$ the following
conditions are equvialent:

(i) $S$ is regular.

(ii) $\left( \left( \mu \wedge ^{\ast }\nu \right) \wedge ^{\ast }\rho
\right) \leq \ \left( \left( \mu \ast \nu \right) \ast \rho \right) $ for
every $\left( \in _{\gamma },\in _{\gamma }\vee q_{\delta }\right) $-fuzzy \
right ideal $\mu ,$ every $\left( \in _{\gamma },\in _{\gamma }\vee
q_{\delta }\right) $-fuzzy \ generalized bi-ideal $\rho $ and every $\left(
\in _{\gamma },\in _{\gamma }\vee q_{\delta }\right) $-fuzzy \ left ideal $%
\nu $ of $S.$

(iii) $\left( \left( \mu \wedge ^{\ast }\nu \right) \wedge ^{\ast }\rho
\right) \leq \ \left( \left( \mu \ast \nu \right) \ast \rho \right) $ for
every $\left( \in _{\gamma },\in _{\gamma }\vee q_{\delta }\right) $-fuzzy \
right ideal $\mu ,$ every $\left( \in _{\gamma },\in _{\gamma }\vee
q_{\delta }\right) $-fuzzy \ bi-ideal $\rho $ and every $\left( \in _{\gamma
},\in _{\gamma }\vee q_{\delta }\right) $-fuzzy \ left ideal $\nu $ of $S.$

(iv) $\left( \left( \mu \wedge ^{\ast }\nu \right) \wedge ^{\ast }\rho
\right) \leq \ \left( \left( \mu \ast \nu \right) \ast \rho \right) $ for
every $\left( \in _{\gamma },\in _{\gamma }\vee q_{\delta }\right) $-fuzzy \
right ideal $\mu ,$ every $\left( \in _{\gamma },\in _{\gamma }\vee
q_{\delta }\right) $-fuzzy \ quasi-ideal $\rho $ and every $\left( \in
_{\gamma },\in _{\gamma }\vee q_{\delta }\right) $-fuzzy \ left ideal $\nu $
of $S.$
\end{theorem}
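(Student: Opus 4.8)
The overall plan is to run the cycle $(i)\Rightarrow(ii)\Rightarrow(iii)\Rightarrow(iv)\Rightarrow(i)$. The two middle steps are free of computation: every $\left(\in_\gamma,\in_\gamma\vee q_\delta\right)$-fuzzy bi-ideal is a fortiori an $\left(\in_\gamma,\in_\gamma\vee q_\delta\right)$-fuzzy generalized bi-ideal, and, by the theorem proved above that in a weakly regular LA-semigroup with left identity every $\left(\in_\gamma,\in_\gamma\vee q_\delta\right)$-fuzzy quasi-ideal is an $\left(\in_\gamma,\in_\gamma\vee q_\delta\right)$-fuzzy bi-ideal, the collection of admissible third ideals $\rho$ shrinks as we pass from $(ii)$ to $(iii)$ to $(iv)$. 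Hence a single inequality that holds for every generalized bi-ideal holds in particular for every bi-ideal and every quasi-ideal, so $(ii)\Rightarrow(iii)\Rightarrow(iv)$ are immediate and the whole content lies in $(i)\Rightarrow(ii)$ and $(iv)\Rightarrow(i)$.

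For $(i)\Rightarrow(ii)$ I would argue pointwise. Writing $m=\mu(a)$, $n=\nu(a)$, $r=\rho(a)$, the value $\left((\mu\wedge^{\ast}\nu)\wedge^{\ast}\rho\right)(a)$ equals $\left((m\wedge n\wedge r)\vee\gamma\right)\wedge\delta$, so it suffices to exhibit, for each $a$, one factorization $a=(uv)w$ with $u$ a right multiple of $a$ (so the right-ideal inequality gives $\mu(u)\vee\gamma\ge m\wedge\delta$), $v$ a left multiple of $a$ (so $\nu(v)\vee\gamma\ge n\wedge\delta$), and $w$ either $a$ itself or of the form $(as)a$ (so the generalized-bi-ideal inequality gives $\rho(w)\vee\gamma\ge r\wedge\delta$). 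Substituting this one term into the suprema defining $\mu\circ\nu$ and $(\mu\ast\nu)\circ\rho$, and tracking the $\vee\gamma$, $\wedge\delta$ corrections, then yields $\left((\mu\ast\nu)\ast\rho\right)(a)\ge\left((m\wedge n\wedge r)\vee\gamma\right)\wedge\delta$, which is $(ii)$. The raw material is regularity $a=(ax)a$ together with weak regularity; for instance $a=(ax)a=(ax)(ea)=(ae)(xa)$ by the medial law already displays $a$ as the product of the right multiple $ae$ and the left multiple $xa$.

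The main obstacle is organizing this into the three-factor shape $a=(uv)w$ with the factors in the order demanded by $\mu\ast\nu$ (right-ideal factor $u$ first, left-ideal factor $v$ second). Regularity tends to deliver the two factors in the opposite order — for example $ax=((zx)a)(ax)$ presents the left multiple $(zx)a$ before the right multiple $ax$ — so the delicate step is the rearrangement, via the medial law (2) and law (3) for semigroups with left identity, that places the $a$-prefixed factor to the left of the $a$-suffixed one; this is exactly where regularity is used essentially. Once the factorization is in hand, the remaining estimates are the routine $\vee\gamma/\wedge\delta$ bookkeeping illustrated in the proofs of the preceding theorems.

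For $(iv)\Rightarrow(i)$ I would specialize to characteristic functions. Let $R$ be a right ideal and $L$ a left ideal of $S$, and take the quasi-ideal $Q=S$. Since $\chi^{\ast}_R$, $\chi^{\ast}_L$, $\chi^{\ast}_S$ are the associated $\left(\in_\gamma,\in_\gamma\vee q_\delta\right)$-fuzzy right, left and quasi-ideals, applying $(iv)$ to $\mu=\chi^{\ast}_R$, $\nu=\chi^{\ast}_L$, $\rho=\chi^{\ast}_S$ and using the lemmas on characteristic functions, which give $\chi^{\ast}_R\wedge^{\ast}\chi^{\ast}_L=\chi^{\ast}_{R\cap L}$ and $\chi^{\ast}_R\ast\chi^{\ast}_L=\chi^{\ast}_{RL}$, yields $\chi^{\ast}_{R\cap L}\le\chi^{\ast}_{(RL)S}$, hence the set inclusion $R\cap L\subseteq(RL)S$. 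Finally, for any element $(rl)s$ the LA-semigroup identity gives $(rl)s=(sl)r$ with $sl\in SL\subseteq L$ and $r\in R$, so $(RL)S\subseteq LR$; therefore $L\cap R\subseteq LR$ for every left ideal $L$ and right ideal $R$, and Theorem \ref{TH12} forces $S$ to be regular, closing the cycle.
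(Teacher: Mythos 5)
Your overall architecture --- the cycle $(i)\Rightarrow(ii)\Rightarrow(iii)\Rightarrow(iv)\Rightarrow(i)$, with the two middle implications coming for free from the inclusions among the classes of generalized bi-ideals, bi-ideals and quasi-ideals --- matches the paper. Your $(iv)\Rightarrow(i)$ is correct but takes a genuinely different route: the paper keeps an arbitrary $\left(\in_{\gamma},\in_{\gamma}\vee q_{\delta}\right)$-fuzzy right ideal $\mu$ and left ideal $\nu$, inserts the constant function $1$ into the quasi-ideal slot, deduces $\mu\wedge^{\ast}\nu=\mu\ast\nu$ and invokes Theorem \ref{TH4}, whereas you specialize to $\chi_{R}^{\ast}$, $\chi_{L}^{\ast}$, $\chi_{S}^{\ast}$, extract the crisp inclusion $R\cap L\subseteq (RL)S$, and then use $(rl)s=(sl)r$ to get $(RL)S\subseteq LR$ and conclude by Theorem \ref{TH12}. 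Your version is shorter and the set-theoretic step is sound; what the paper's version buys is a reusable fuzzy identity ($\mu\ast\nu=\mu\wedge^{\ast}\nu$ for all fuzzy right/left ideals) rather than a one-off crisp statement.

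The genuine gap is in $(i)\Rightarrow(ii)$. You correctly reduce the implication to exhibiting, for each $s\in S$, a factorization $s=(uv)w$ with $u=s\cdot(\cdots)$ (so the right-ideal inequality gives $\mu(u)\vee\gamma\geq\mu(s)\wedge\delta$), $v=(\cdots)\cdot s$ (so $\nu(v)\vee\gamma\geq\nu(s)\wedge\delta$) and $w=s$, and you correctly diagnose that regularity and weak regularity deliver the two inner factors in the wrong order, so a rearrangement via laws (2) and (3) is required. But you then stop: no such identity is actually produced, and this computation is the entire substance of the implication. The two-factor identity $a=(ae)(xa)$ you do write down only yields the conclusion of Theorem \ref{TH4} (the $\mu\wedge^{\ast}\nu\leq\mu\ast\nu$ statement), not the three-factor inequality $\left(\left(\mu\wedge^{\ast}\nu\right)\wedge^{\ast}\rho\right)\leq\left(\left(\mu\ast\nu\right)\ast\rho\right)$. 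The paper carries out precisely this missing step, deriving from $s=(sx)s$ and $s=(sy)(sz)$ the identity $s=((s(xz))((s(((yz)y)((sy)z)))s))s$, whose first inner factor $s(xz)$ is a right multiple of $s$, whose second inner factor $(s(((yz)y)((sy)z)))s$ is a left multiple of $s$, and whose outer factor is $s$ itself; the remaining $\vee\gamma/\wedge\delta$ bookkeeping is then routine, as you say. Until you supply such an identity (or an equivalent one), $(i)\Rightarrow(ii)$ is not proved.
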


\begin{proof}
(i)$\Longrightarrow $(ii) Let $\mu ,$ $\nu $ and $\rho $ be any $\left( \in
_{\gamma },\in _{\gamma }\vee q_{\delta }\right) $-fuzzy \ right ideal $\mu
, $ $\left( \in _{\gamma },\in _{\gamma }\vee q_{\delta }\right) $-fuzzy \
generalized bi-ideal $\rho $ and $\left( \in _{\gamma },\in _{\gamma }\vee
q_{\delta }\right) $-fuzzy \ left ideal $\nu $ of $S,$ respectively. Let $%
s\in S.$ Since $S$ is regular, so there exist $x\in S,$ such that, $s=\left(
sx\right) s.$ Also, since $S$ is weakly regular, so there exist $y,z\in S,$
such that, $s=\left( sy\right) \left( sz\right) .$ Therefore, by using (1)
and (4) we have,%
\begin{eqnarray*}
s &=&\left( sx\right) s=(\left( \left( sy\right) \left( sz\right) \right) x)s
\\
&=&((x(sz))(sy))s=((s(xz)(sy))s \\
&=&((s(xz))(((sy)(sz))y))s=((s(xz))((s((sy)z))y)s \\
&=&((s(xz))((y((sy)z))s))s=((s(xz))(((sy)(yz))s)s \\
&=&((s(xz))((((yz)y)s)s))s \\
&=&((s(xz))((((yz)y)(sy(sz)))s))s \\
&=&((s(xz))((((yz)y)(s((sy)z)))s))s \\
&=&((s(xz))((s(((yz)y)((sy)z)))s))s.
\end{eqnarray*}%
Now,%
\begin{eqnarray*}
\left( \left( \mu \ast \nu \right) \ast \rho \right) (s) &=&\left( \left(
\bigvee\limits_{s=lm}\{\left( \mu \ast \nu \right) (l)\wedge \rho
(m)\}\right) \vee \gamma \right) \wedge \delta \\
&\geq &\left[ \left( \left( \mu \ast \nu \right)
((s(xz))((s(((yz)y)((sy)z)))s))\wedge \rho (s)\right) \vee \gamma \right]
\wedge \delta \\
&=&\left( \left( \left(
\bigvee\limits_{(s(xz))((s(((yz)y)((sy)z)))s)=lm}\{\mu (l)\wedge \nu
(m)\}\right) \wedge \rho (s)\right) \vee \gamma \right) \wedge \delta \\
&\geq &\left( \left( \{\mu (s(xz))\wedge \nu ((s(((yz)y)((sy)z)))s)\}\wedge
\rho (s)\right) \vee \gamma \right) \wedge \delta \\
&=&\left( \left( \{\left( \mu (s(xz))\vee \gamma \right) \wedge \left( \nu
((s(((yz)y)((sy)z)))s)\vee \gamma \right) \}\wedge \rho (s)\right) \vee
\gamma \right) \wedge \delta \\
&\geq &\left( \left( \{\left( \mu (s)\wedge \delta \right) \wedge \left( \nu
(s)\wedge \delta \right) \}\wedge \rho (s)\right) \vee \gamma \right) \wedge
\delta \\
&=&\left( \left( \{\mu (s)\wedge \nu (s)\}\wedge \rho (s)\right) \vee \gamma
\right) \wedge \delta \\
&=&\ \left( \left( \mu \wedge ^{\ast }\nu \right) \wedge ^{\ast }\rho
\right) (s).
\end{eqnarray*}%
Thus,$\ \left( \left( \mu \wedge ^{\ast }\nu \right) \wedge ^{\ast }\rho
\right) \leq \left( \left( \mu \ast \nu \right) \ast \rho \right) .$ $%
(ii)\implies (iii)\implies (iv)$ are obvious.

$(iv)\implies (i).$ Let $\mu $ be any $\left( \in _{\gamma },\in _{\gamma
}\vee q_{\delta }\right) $-fuzzy right ideal and $\nu $ be any $\left( \in
_{\gamma },\in _{\gamma }\vee q_{\delta }\right) $-fuzzy left ideal of $S,$
respectively. Since $1$ is $\left( \in _{\gamma },\in _{\gamma }\vee
q_{\delta }\right) $-fuzzy quasi-ideal of $S.$ So, we have%
\begin{eqnarray*}
\left( \mu \wedge ^{\ast }\nu \right) (s) &=&\left( \left( \mu \wedge \nu
\right) (s)\vee \gamma \right) \wedge \delta \\
&=&\left( \left( \left( \mu \wedge 1\right) \wedge \nu \right) (s)\vee
\gamma \right) \wedge \delta \\
&=&\left( \left( \mu \wedge ^{\ast }1\right) \wedge ^{\ast }\nu \right) (s)
\\
&\leq &\left( \left( \mu \ast 1\right) \ast \nu \right) (s) \\
&=&\left( \left( \bigvee\limits_{s=ab}\{\left( \mu \ast 1\right) (a)\wedge
\nu (b)\}\right) \vee \gamma \right) \wedge \delta \\
&=&\left( \left( \bigvee\limits_{s=ab}\left\{ \left(
\bigvee\limits_{a=lm}\mu (l)\wedge 1(m)\right) \wedge \nu (b)\right\}
\right) \vee \gamma \right) \wedge \delta \\
&=&\left( \left( \bigvee\limits_{s=ab}\left\{ \left(
\bigvee\limits_{a=lm}\mu (l)\wedge 1\right) \wedge \nu (b)\right\} \right)
\vee \gamma \right) \wedge \delta \\
&=&\left( \left( \bigvee\limits_{s=ab}\left\{ \left(
\bigvee\limits_{a=lm}\mu (l)\right) \wedge \nu (b)\right\} \right) \vee
\gamma \right) \wedge \delta \\
&=&\left( \left( \bigvee\limits_{s=ab}\left\{ \left(
\bigvee\limits_{a=lm}\mu (l)\right) \wedge \nu (b)\right\} \wedge \delta
\right) \vee \gamma \right) \wedge \delta \\
&=&\left( \left( \bigvee\limits_{s=ab}\left\{ \left(
\bigvee\limits_{a=lm}\left\{ \mu (l)\wedge \delta \right\} \right) \wedge
\nu (b)\right\} \right) \vee \gamma \right) \wedge \delta \\
&\leq &\left( \left( \bigvee\limits_{s=ab}\left\{ \left(
\bigvee\limits_{a=lm}\left\{ \mu (lm)\vee \gamma \right\} \right) \wedge \nu
(b)\right\} \right) \vee \gamma \right) \wedge \delta \\
&=&\left( \left( \bigvee\limits_{s=ab}\left\{ \left( \left\{ \mu (a)\vee
\gamma \right\} \right) \wedge \nu (b)\right\} \right) \vee \gamma \right)
\wedge \delta \\
&=&\left( \left( \bigvee\limits_{s=ab}\left\{ \mu (a)\wedge \nu (b)\right\}
\right) \vee \gamma \right) \wedge \delta \\
&=&\left( \mu \ast \nu \right) (s).
\end{eqnarray*}

Similarly, we can also prove that $\left( \mu \ast \nu \right) \leq \left(
\mu \wedge ^{\ast }\nu \right) .$ Hence, $\left( \mu \ast \nu \right)
=\left( \mu \wedge ^{\ast }\nu \right) $ for every $\left( \in _{\gamma
},\in _{\gamma }\vee q_{\delta }\right) $-fuzzy right ideal $\mu $\ and
every $\left( \in _{\gamma },\in _{\gamma }\vee q_{\delta }\right) $-fuzzy
left ideal $\nu $ of $S.$ Hence, by Theorem \ref{TH4} $S$ is regular.
\end{proof}

\begin{theorem}
\label{TH5}For a weakly regular LA-semigroup $S$ with left identity $e,$ the
following conditions are equvialent:

(i) $S$ is regular.

(ii) $\mu ^{\ast }=$ $\left( \left( \mu \ast 1\right) \ast \mu \right) $ for
every $\left( \in _{\gamma },\in _{\gamma }\vee q_{\delta }\right) $-fuzzy
generalized bi-ideal $\mu $ of $S.$

(iii) $\mu ^{\ast }=$ $\left( \left( \mu \ast 1\right) \ast \mu \right) $
for every $\left( \in _{\gamma },\in _{\gamma }\vee q_{\delta }\right) $%
-fuzzy bi-ideal $\mu $ of $S.$

(iv) $\mu ^{\ast }=$ $\left( \left( \mu \ast 1\right) \ast \mu \right) $ for
every $\left( \in _{\gamma },\in _{\gamma }\vee q_{\delta }\right) $-fuzzy
quasi-ideal $\mu $ of $S.$
\end{theorem}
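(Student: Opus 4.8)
The plan is to prove the cycle $(i)\Rightarrow(ii)\Rightarrow(iii)\Rightarrow(iv)\Rightarrow(i)$, as in the preceding theorem. The implications $(ii)\Rightarrow(iii)$ and $(iii)\Rightarrow(iv)$ are immediate from class inclusions: by the defining conditions every $\left(\in_{\gamma},\in_{\gamma}\vee q_{\delta}\right)$-fuzzy bi-ideal is an $\left(\in_{\gamma},\in_{\gamma}\vee q_{\delta}\right)$-fuzzy generalized bi-ideal, and---since $S$ is weakly regular with left identity---every $\left(\in_{\gamma},\in_{\gamma}\vee q_{\delta}\right)$-fuzzy quasi-ideal is an $\left(\in_{\gamma},\in_{\gamma}\vee q_{\delta}\right)$-fuzzy bi-ideal by the earlier theorem of this section. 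Hence an identity asserted for a larger class specializes to each smaller one, and the real content sits in $(i)\Rightarrow(ii)$ and $(iv)\Rightarrow(i)$.

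For $(i)\Rightarrow(ii)$, fix an $\left(\in_{\gamma},\in_{\gamma}\vee q_{\delta}\right)$-fuzzy generalized bi-ideal $\mu$ and establish $\mu^{\ast}\le(\mu\ast 1)\ast\mu$ and $(\mu\ast 1)\ast\mu\le\mu^{\ast}$ separately. First I would rewrite the starred product via the identity $\alpha\ast\beta=\alpha^{\ast}\circ\beta^{\ast}$ (the lemma computing $\mu\ast\nu$), using $(\mu\ast 1)^{\ast}=\mu\ast 1$ and $1^{\ast}(x)=\delta$, so that $((\mu\ast 1)\ast\mu)(s)=\bigvee_{s=(uv)q}\bigl(\mu^{\ast}(u)\wedge\mu^{\ast}(q)\bigr)$. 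For $\mu^{\ast}\le(\mu\ast 1)\ast\mu$ I would use regularity: choosing $x$ with $s=(sx)s$ and reading off the single term coming from the factorization $s=(sx)\cdot s$ with $sx=s\cdot x$ yields $\mu^{\ast}(s)\wedge\mu^{\ast}(s)=\mu^{\ast}(s)$. For the reverse inequality I would apply the generalized bi-ideal inequality $\mu((uv)q)\vee\gamma\ge\mu(u)\wedge\mu(q)\wedge\delta$ to each factorization $s=(uv)q$; after distributing $\vee\gamma$ and $\wedge\delta$ through the meets (and using $\gamma<\delta$) this bounds each term $\mu^{\ast}(u)\wedge\mu^{\ast}(q)$ by $\mu^{\ast}(s)$, hence the whole supremum.

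For $(iv)\Rightarrow(i)$ I would pass to characteristic functions and invoke Theorem \ref{TH3}. Let $Q$ be any quasi-ideal of $S$; then $\chi_{Q}^{\ast}$ is an $\left(\in_{\gamma},\in_{\gamma}\vee q_{\delta}\right)$-fuzzy quasi-ideal by the relevant lemma, so $(iv)$ gives $(\chi_{Q}^{\ast})^{\ast}=(\chi_{Q}^{\ast}\ast 1)\ast\chi_{Q}^{\ast}$. The left side equals $\chi_{Q}^{\ast}$ by idempotency of the star. For the right side I would use that $\alpha\ast\beta$ depends only on $\alpha^{\ast},\beta^{\ast}$, so that $\chi_{Q}^{\ast}\ast 1=\chi_{Q}\ast\chi_{S}=\chi_{QS}^{\ast}$ by the lemma $\chi_{L}\ast\chi_{R}=\chi_{LR}^{\ast}$ (with $1=\chi_{S}$); since $\chi_{QS}^{\ast}$ is again a starred characteristic function, a second application gives $(\chi_{Q}^{\ast}\ast 1)\ast\chi_{Q}^{\ast}=\chi_{QS}\ast\chi_{Q}=\chi_{(QS)Q}^{\ast}$. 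Thus $\chi_{Q}^{\ast}=\chi_{(QS)Q}^{\ast}$, and since $A\mapsto\chi_{A}^{\ast}$ is injective (it equals $\delta$ on $A$ and $\gamma$ off $A$, with $\gamma<\delta$), this forces $Q=(QS)Q$. As $Q$ was arbitrary, Theorem \ref{TH3} shows that $S$ is regular.

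The main obstacle I anticipate is purely computational: the careful bookkeeping of the operators $\vee\gamma$ and $\wedge\delta$ in $(i)\Rightarrow(ii)$, where one must combine distributivity of $\wedge$ over $\vee$ in $[0,1]$ with $\gamma<\delta$ to convert the generalized bi-ideal inequality into the bound on $\mu^{\ast}(u)\wedge\mu^{\ast}(q)$, and the need in $(iv)\Rightarrow(i)$ to chain the lemma $\chi_{L}\ast\chi_{R}=\chi_{LR}^{\ast}$ correctly through the intermediate set $QS$, rather than applying it blindly to arguments that are already starred.
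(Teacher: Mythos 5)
Your proposal is correct and follows essentially the same route as the paper: the two inequalities in $(i)\Rightarrow(ii)$ via the regular decomposition $s=(sx)s$ and the generalized bi-ideal inequality, the chain $(ii)\Rightarrow(iii)\Rightarrow(iv)$ by class inclusions (using the earlier theorem that quasi-ideals are bi-ideals in this setting), and $(iv)\Rightarrow(i)$ by applying the hypothesis to (starred) characteristic functions of quasi-ideals, the lemma $\chi_{L}\ast\chi_{R}=\chi_{LR}^{\ast}$, and Theorem~\ref{TH3}. Your extra care in tracking the stars on $\chi_{Q}$ and in justifying injectivity of $A\mapsto\chi_{A}^{\ast}$ is a slight tightening of the paper's argument, not a different approach.
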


\begin{proof}
(i)$\implies $(ii) Let $\mu $ be an $\left( \in _{\gamma },\in _{\gamma
}\vee q_{\delta }\right) $-fuzzy generalized bi-ideal of $S$ and $a\in S.$
Since $S$ is regular, so there exist $x\in S,$ such that, $a=\left(
ax\right) a.$ Thus, we have, 
\begin{eqnarray*}
\left( \left( \mu \ast 1\right) \ast \mu \right) (a) &=&\left( \left(
\tbigvee\limits_{a=yz}\left( \mu \ast 1\right) (y)\wedge \mu (z)\right) \vee
\gamma \right) \wedge \delta \\
&\geq &\left( \left( \left( \mu \ast 1\right) (ax)\wedge \mu (a)\right) \vee
\gamma \right) \wedge \delta \\
&=&\left( \left( \left[ \left( \left( \tbigvee\limits_{ax=bc}(\mu (b)\wedge
1(c)\right) \vee \gamma \right) \wedge \delta \right] \wedge \mu (a)\right)
\vee \gamma \right) \wedge \delta \\
&=&(([((\mu (a)\wedge 1(x))\vee \gamma )\wedge \delta ]\wedge \mu (a))\vee
\gamma )\wedge \delta \\
&=&((\mu (a)\wedge \mu (a))\vee \gamma )\wedge \delta \\
&=&(\mu (a)\vee \gamma )\wedge \delta \\
&=&\mu ^{\ast }(a).
\end{eqnarray*}%
Thus, $\left( \left( \mu \ast 1\right) \ast \mu \right) \geq \mu ^{\ast }.$
Since $\mu $ is an $\left( \in _{\gamma },\in _{\gamma }\vee q_{\delta
}\right) $-fuzzy generalized bi-ideal of $S.$ So, we have%
\begin{eqnarray*}
\left( \left( \mu \ast 1\right) \ast \mu \right) (a) &=&\left( \left(
\tbigvee\limits_{a=yz}\left( \mu \ast 1\right) (y)\wedge \mu (z)\right) \vee
\gamma \right) \wedge \delta \\
&=&\left( \left( \tbigvee\limits_{a=yz}\left( \left[ \left( \left(
\tbigvee\limits_{y=pq}\left( \mu (p)\wedge 1(q)\right) \right) \vee \gamma
\right) \wedge \delta \right] \wedge \mu (z)\right) \right) \vee \gamma
\right) \wedge \delta \\
&=&\left( \left( \tbigvee\limits_{a=yz}\left( \left(
\tbigvee\limits_{y=pq}\left( \mu (p)\wedge 1\right) \right) \wedge \mu
(z)\right) \right) \vee \gamma \right) \wedge \delta \\
&=&\left( \left( \tbigvee\limits_{a=yz}\left( \tbigvee\limits_{y=pq}\mu
(p)\wedge \mu (z)\right) \right) \vee \gamma \right) \wedge \delta \\
&=&\left( \left( \tbigvee\limits_{a=yz}\left( \tbigvee\limits_{y=pq}\left(
\mu (p)\wedge \mu (z)\right) \wedge \delta \right) \right) \vee \gamma
\right) \wedge \delta \\
&\leq &\left( \left( \tbigvee\limits_{a=\left( pq\right) z}\left( \mu
(\left( pq\right) z\right) \vee \gamma \right) \vee \gamma \right) \wedge
\delta \\
&=&\left( \left( \mu (a\right) \vee \gamma \right) \wedge \delta \\
&=&\mu ^{\ast }(a).
\end{eqnarray*}%
Thus, $\left( \left( \mu \ast 1\right) \ast \mu \right) \leq \mu ^{\ast }.$
Hence, $\left( \left( \mu \ast 1\right) \ast \mu \right) =\mu ^{\ast }.$ $%
(ii)\implies (iii)\implies (iv)$ are obvious. $(iv)\implies (i).$ Let $A$ be
any quasi-ideal of $S.$ Then, $\chi _{A}$ is an $\left( \in _{\gamma },\in
_{\gamma }\vee q_{\delta }\right) $-fuzzy quasi-ideal of $S.$ Hence, by
hypothesis, $\chi _{A}^{\ast }=\left( \left( \chi _{A}\ast 1\right) \ast
\chi _{A}\right) =\left( \left( \chi _{A}\ast \chi _{S}\right) \ast \chi
_{A}\right) =\chi _{\left( AS\right) A}^{\ast }.$ This implies that $%
A=\left( AS\right) A.$ Hence, it follows from Theorem \ref{TH3}, that $S$ is
regular.
\end{proof}

\begin{theorem}
For a weakly regular LA-semigroup $S$ with left identity $e,$ the following
conditions are equvialent:

(i) $S$ is regular.

(ii) $\left( \mu \wedge ^{\ast }\nu \right) =$ $\left( \left( \mu \ast \nu
\right) \ast \mu \right) $ for every $\left( \in _{\gamma },\in _{\gamma
}\vee q_{\delta }\right) $-fuzzy quasi-ideal $\mu $ and every $\left( \in
_{\gamma },\in _{\gamma }\vee q_{\delta }\right) $-fuzzy ideal $\nu $ of $S.$

(iii) $\left( \mu \wedge ^{\ast }\nu \right) =$ $\left( \left( \mu \ast \nu
\right) \ast \mu \right) $ for every $\left( \in _{\gamma },\in _{\gamma
}\vee q_{\delta }\right) $-fuzzy quasi-ideal $\mu $ and every $\left( \in
_{\gamma },\in _{\gamma }\vee q_{\delta }\right) $-fuzzy interior ideal $\nu 
$ of $S.$

(iv) $\left( \mu \wedge ^{\ast }\nu \right) =$ $\left( \left( \mu \ast \nu
\right) \ast \mu \right) $ for every $\left( \in _{\gamma },\in _{\gamma
}\vee q_{\delta }\right) $-fuzzy bi-ideal $\mu $ and every $\left( \in
_{\gamma },\in _{\gamma }\vee q_{\delta }\right) $-fuzzy ideal $\nu $ of $%
S.. $

(v) $\left( \mu \wedge ^{\ast }\nu \right) =$ $\left( \left( \mu \ast \nu
\right) \ast \mu \right) $ for every $\left( \in _{\gamma },\in _{\gamma
}\vee q_{\delta }\right) $-fuzzy bi-ideal $\mu $ and every $\left( \in
_{\gamma },\in _{\gamma }\vee q_{\delta }\right) $-fuzzy interior ideal $\nu 
$ of $S.$

(vi) $\left( \mu \wedge ^{\ast }\nu \right) =$ $\left( \left( \mu \ast \nu
\right) \ast \mu \right) $ for every $\left( \in _{\gamma },\in _{\gamma
}\vee q_{\delta }\right) $-fuzzy generalized bi-ideal $\mu $ and every $%
\left( \in _{\gamma },\in _{\gamma }\vee q_{\delta }\right) $-fuzzy ideal $%
\nu $ of $S.$

(vii) $\left( \mu \wedge ^{\ast }\nu \right) =$ $\left( \left( \mu \ast \nu
\right) \ast \mu \right) $ for every $\left( \in _{\gamma },\in _{\gamma
}\vee q_{\delta }\right) $-fuzzy generalized bi-ideal $\mu $ and every $%
\left( \in _{\gamma },\in _{\gamma }\vee q_{\delta }\right) $-fuzzy interior
ideal $\nu $ of $S.$
\end{theorem}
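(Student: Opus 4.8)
The plan is to prove the single substantive implication (i)$\Longrightarrow$(vii), to note that every downward implication among (ii)--(vii) is automatic, and to close the loop with (ii)$\Longrightarrow$(i). The organizing remark is that the six identities (ii)--(vii) differ only in the classes to which the two test ideals are allowed to belong: the admissible $\mu$ ranges over quasi-ideals, bi-ideals, or generalized bi-ideals, and the admissible $\nu$ over two-sided ideals or interior ideals. Since every $(\in_{\gamma},\in_{\gamma}\vee q_{\delta})$-fuzzy quasi-ideal of $S$ is a fuzzy bi-ideal (as shown above, using weak regularity and the left identity), every fuzzy bi-ideal is a fuzzy generalized bi-ideal by definition, and every fuzzy two-sided ideal is a fuzzy interior ideal, the pair of classes appearing in (vii) is the largest and that in (ii) the smallest. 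Consequently (vii) implies each of (ii)--(vi), and each of (iii)--(vii) implies (ii); so once (i)$\Longrightarrow$(vii) and (ii)$\Longrightarrow$(i) are available, every $X\in\{\text{(ii)},\dots,\text{(vii)}\}$ sits in the chain (i)$\Longrightarrow$(vii)$\Longrightarrow X\Longrightarrow$(ii)$\Longrightarrow$(i), giving the full equivalence.

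For (i)$\Longrightarrow$(vii) I would prove the equality $\mu\wedge^{\ast}\nu=(\mu\ast\nu)\ast\mu$ as two inequalities, where $\mu$ is an $(\in_{\gamma},\in_{\gamma}\vee q_{\delta})$-fuzzy generalized bi-ideal and $\nu$ an $(\in_{\gamma},\in_{\gamma}\vee q_{\delta})$-fuzzy interior ideal. The inequality $(\mu\ast\nu)\ast\mu\leq\mu\wedge^{\ast}\nu$ uses no regularity: fix $s\in S$ and consider any factorization $s=(pq)m$ contributing to $((\mu\ast\nu)\ast\mu)(s)$. Because $s=(pq)m$ already has the generalized-bi-ideal shape, $\mu(s)\vee\gamma\geq\mu(p)\wedge\mu(m)\wedge\delta$, and because it has the interior shape $(pq)m$, $\nu(s)\vee\gamma\geq\nu(q)\wedge\delta$; combining these gives $(\mu(s)\wedge\nu(s))\vee\gamma\geq\mu(p)\wedge\nu(q)\wedge\mu(m)\wedge\delta$. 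Taking the supremum over factorizations and applying the truncations $(\,\cdot\,)\vee\gamma$ and $(\,\cdot\,)\wedge\delta$ then yields $((\mu\ast\nu)\ast\mu)(s)\leq(\mu\wedge^{\ast}\nu)(s)$.

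The reverse inequality $\mu\wedge^{\ast}\nu\leq(\mu\ast\nu)\ast\mu$ is where regularity is spent, and it is the main obstacle. Here one writes $s=(sx)s$ from regularity and $s=(sy)(sz)$ from weak regularity and, exactly in the spirit of the long rearrangement in the proof of the preceding theorem, applies the identities (1)--(4) repeatedly to exhibit a single factorization $s=(pq)m$ in which $p$ and $m$ carry $s$ in a (generalized) bi-ideal position (for instance $m=s$ and $p$ of the form $(sc)s$) while the middle factor is of interior form $q=(us)v$. Once such a representation is produced, the generalized-bi-ideal property gives $\mu(p)\vee\gamma\geq\mu(s)\wedge\delta$ and $\mu(m)=\mu(s)$, the interior-ideal property gives $\nu(q)\vee\gamma\geq\nu(s)\wedge\delta$, and feeding these into $((\mu\ast\nu)\ast\mu)(s)$ returns $(\mu(s)\wedge\nu(s)\vee\gamma)\wedge\delta=(\mu\wedge^{\ast}\nu)(s)$. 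I expect the difficulty to lie entirely in steering the rearrangement so that two of the three factors present $s$ in a bi-ideal position and the third presents it in the interior position $(us)v$; the surrounding manipulation of the $\vee\gamma$, $\wedge\delta$ truncations is the routine bookkeeping already displayed in the earlier proofs.

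Finally, the downward implications among (ii)--(vii) are immediate from the class inclusions noted above, since enlarging the class of admissible test ideals only strengthens the identity. For (ii)$\Longrightarrow$(i) I would specialize $\nu$ to the constant fuzzy subset $1$, which is an $(\in_{\gamma},\in_{\gamma}\vee q_{\delta})$-fuzzy two-sided ideal. Then $\mu\wedge^{\ast}1=\mu^{\ast}$, so (ii) collapses to $\mu^{\ast}=(\mu\ast 1)\ast\mu$ for every $(\in_{\gamma},\in_{\gamma}\vee q_{\delta})$-fuzzy quasi-ideal $\mu$, which is precisely condition (iv) of Theorem \ref{TH5}. That theorem then delivers the regularity of $S$, closing the equivalence.
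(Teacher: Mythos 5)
Your proposal matches the paper's proof in all essentials: the same reduction of (ii)--(vii) to one another via the class inclusions (quasi-ideal $\subseteq$ bi-ideal $\subseteq$ generalized bi-ideal, ideal $\subseteq$ interior ideal), the same specialization $\nu=1$ collapsing (ii) to condition (iv) of Theorem \ref{TH5}, and the same two-inequality strategy for (i)$\Rightarrow$(vii), where the paper merely splits your combined estimate for $\left( \mu \ast \nu \right) \ast \mu \leq \mu \wedge ^{\ast }\nu$ into the two comparisons $\left( \mu \ast \nu \right) \ast \mu \leq \left( \mu \ast 1\right) \ast \mu \leq \mu ^{\ast }$ and $\left( \mu \ast \nu \right) \ast \mu \leq \left( 1\ast \nu \right) \ast 1\leq \nu ^{\ast }$. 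The rearrangement you flag as the main obstacle is in fact short and needs only regularity, not weak regularity: $s=(sx)s=(((sx)s)x)s=((xs)(sx))s=(s((xs)x))s$, giving $p=m=s$ exactly and $q=(xs)x$ in interior position, which is even simpler than the shape you anticipated.
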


\begin{proof}
(i)$\implies $(vii) Let $\mu $ be an $\left( \in _{\gamma },\in _{\gamma
}\vee q_{\delta }\right) $-fuzzy generalized bi-ideal and $\nu $ be an $%
\left( \in _{\gamma },\in _{\gamma }\vee q_{\delta }\right) $-fuzzy interior
ideal of $S,$ and $x\in S.$ Then,%
\begin{eqnarray*}
\left( \left( \mu \ast \nu \right) \ast \mu \right) (x) &\leq &\left( \left(
\mu \ast 1\right) \ast \mu \right) (x) \\
&=&\left( \left( \tbigvee\limits_{x=yz}\left( \mu \ast 1\right) (y)\wedge
\mu (z)\right) \vee \gamma \right) \wedge \delta \\
&=&\left( \left( \tbigvee\limits_{x=yz}\left( \left[ \left( \left(
\tbigvee\limits_{y=pq}\left( \mu (p)\wedge 1(q)\right) \right) \vee \gamma
\right) \wedge \delta \right] \wedge \mu (z)\right) \right) \vee \gamma
\right) \wedge \delta \\
&=&\left( \left( \tbigvee\limits_{x=yz}\left( \left(
\tbigvee\limits_{y=pq}\left( \mu (p)\wedge 1\right) \right) \wedge \mu
(z)\right) \right) \vee \gamma \right) \wedge \delta \\
&=&\left( \left( \tbigvee\limits_{x=yz}\left( \tbigvee\limits_{y=pq}\mu
(p)\wedge \mu (z)\right) \right) \vee \gamma \right) \wedge \delta \\
&=&\left( \left( \tbigvee\limits_{x=yz}\left( \left(
\tbigvee\limits_{y=pq}\mu (p)\wedge \mu (z)\right) \wedge \delta \right)
\right) \vee \gamma \right) \wedge \delta \\
&\leq &\left( \left( \tbigvee\limits_{x=\left( pq\right) z}\mu (\left(
pq\right) z)\right) \vee \gamma \right) \wedge \delta \\
&=&\left( \mu (x)\vee \gamma \right) \wedge \delta \\
&=&\mu ^{\ast }(x).
\end{eqnarray*}%
Therefore $\left( \left( \mu \ast \nu \right) \ast \mu \right) \leq \mu
^{\ast }.$ Also%
\begin{eqnarray*}
\left( \left( \mu \ast \nu \right) \ast \mu \right) (x) &\leq &\left( \left(
1\ast \nu \right) \ast 1\right) (x) \\
&=&\left( \left( \tbigvee\limits_{x=yz}\left( \left( 1\ast \nu \right)
(y)\right) \wedge 1\left( z\right) \right) \vee \gamma \right) \wedge \delta
\\
&=&\left( \left( \tbigvee\limits_{x=yz}\left( \left(
\tbigvee\limits_{y=pq}\left( 1\wedge \nu (q)\right) \right) \wedge 1\right)
\right) \vee \gamma \right) \wedge \delta \\
&=&\left( \tbigvee\limits_{x=yz}\left( \tbigvee\limits_{y=pq}\left( \nu
(q)\right) \right) \vee \gamma \right) \wedge \delta \\
&=&\left( \tbigvee\limits_{x=yz}\left( \tbigvee\limits_{y=pq}\left( \nu
(q)\right) \wedge \delta )\right) \vee \gamma \right) \wedge \delta \\
&\leq &\left( \left( \tbigvee\limits_{x=\left( pq\right) z}\nu (\left(
pq)z\right) \right) \vee \gamma \right) \wedge \delta \\
&=&\left( \left( \nu (x\right) \vee \gamma \right) \wedge \delta \\
&=&\nu ^{\ast }(x).
\end{eqnarray*}%
Thus, $\left( \left( \mu \ast \nu \right) \ast \mu \right) \leq \left( \mu
^{\ast }\wedge \nu ^{\ast }\right) =\left( \mu \wedge ^{\ast }\nu \right) $.
Now, let $s\in S.$ Since $S$ is regular so there exist $x\in S,$ such that $%
s=\left( sx\right) s.$ Now, by using (1) and (4), we have%
\begin{equation*}
s=\left( sx\right) s=(((sx)s)x)s=((xs)(sx))s=(s((xs)x))s
\end{equation*}%
So, we have%
\begin{eqnarray*}
&&\left( \left( \mu \ast \nu \right) \ast \mu \right) (s) \\
&=&\left( \left( \tbigvee\limits_{s=yz}\left( (\mu \ast \nu )(y)\wedge \mu
(z)\right) \right) \vee \gamma \right) \wedge \delta \\
&\geq &\left( \left( (\mu \ast \nu )(s((xs)x)\wedge \mu (s)\right) \vee
\gamma \right) \wedge \delta \\
&=&\left( \left( \left( \left( \left( \tbigvee\limits_{s((xs)x=pq}(\mu
(p)\wedge \nu (q))\right) \vee \gamma \right) \wedge \delta \right) \wedge
\mu (s)\right) \vee \gamma \right) \wedge \delta \\
&\geq &\left( \left( (\mu (s)\wedge \nu (\left( xs\right) x))\wedge \mu
(s)\right) \vee \gamma \right) \wedge \delta \\
&=&\left( \left( \left( \mu (s)\wedge \left( \nu (\left( xs\right) x)\vee
\gamma \right) \right) \wedge \mu (s)\right) \vee \gamma \right) \wedge
\delta \\
\ &\geq &\left( \left( \left( \mu (s)\wedge \left( \nu (s)\wedge \delta \
\right) \right) \wedge \mu (s)\right) \vee \gamma \right) \wedge \delta \\
&=&\left( \left( \mu (s)\wedge \nu (s)\right) \vee \gamma \right) \wedge
\delta \\
&=&\ \left( \left( \mu \wedge \nu \right) (s)\vee \gamma \right) \wedge
\delta \  \\
&=&\ \left( \mu \wedge ^{\ast }\nu \right) (s).
\end{eqnarray*}%
Therefore, $\left( \left( \mu \ast 1\right) \ast \mu \right) \geq \left( \mu
\wedge ^{\ast }\nu \right) .$ Hence, $\left( \left( \mu \ast 1\right) \ast
\mu \right) =\left( \mu \wedge ^{\ast }\nu \right) .$\ $(vii)\implies
(v)\implies (iii)\implies (ii)$ and $(vii)\implies (vi)\implies (iv)\implies
(ii)$ are clear. $(ii)\implies (i).$\ Let $\mu $ be an \ $\left( \in
_{\gamma },\in _{\gamma }\vee q_{\delta }\right) $-fuzzy quasi-ideal of $S.$
Now, $\mu ^{\ast }(a)=\left( \mu (a)\vee \gamma \right) \wedge \delta
=\left( \left( \mu \wedge 1\right) (a)\vee \gamma \right) \wedge \delta
=\left( \mu \wedge ^{\ast }1\right) (a)=\left( \left( \mu \ast 1\right) \ast
\mu \right) (a)\Rightarrow \mu ^{\ast }=\left( \mu \ast 1\right) \ast \mu .$
Thus, by \ Theorem \ref{TH5}, $S$ is regular.
\end{proof}

\begin{theorem}
For a weakly regular LA-semigroup $S$ with left identity $e,$ the following
conditions are equvialent:

(i) $S$ is regular.

(ii) $\left( \mu \wedge ^{\ast }\nu \right) \leq \left( \mu \ast \nu \right) 
$ for every $\left( \in _{\gamma },\in _{\gamma }\vee q_{\delta }\right) $%
-fuzzy quasi-ideal $\mu $ and $\left( \in _{\gamma },\in _{\gamma }\vee
q_{\delta }\right) $-fuzzy left ideal $\nu $ of $S.$

(iii) $\left( \mu \wedge ^{\ast }\nu \right) \leq \left( \mu \ast \nu
\right) $ for every $\left( \in _{\gamma },\in _{\gamma }\vee q_{\delta
}\right) $-fuzzy bi-ideal $\mu $ and $\left( \in _{\gamma },\in _{\gamma
}\vee q_{\delta }\right) $-fuzzy left ideal $\nu $ of $S.$

(iv) $\left( \mu \wedge ^{\ast }\nu \right) \leq \left( \mu \ast \nu \right) 
$ for every $\left( \in _{\gamma },\in _{\gamma }\vee q_{\delta }\right) $%
-fuzzy generalized bi-ideal $\mu $ and $\left( \in _{\gamma },\in _{\gamma
}\vee q_{\delta }\right) $-fuzzy left ideal $\nu $ of $S.$
\end{theorem}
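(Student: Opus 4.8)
The plan is to establish the cycle $(i)\Rightarrow(iv)\Rightarrow(iii)\Rightarrow(ii)\Rightarrow(i)$. The two steps $(iv)\Rightarrow(iii)\Rightarrow(ii)$ should be immediate: since $S$ is weakly regular with left identity, the earlier theorem of this section gives that every $(\in_\gamma,\in_\gamma\vee q_\delta)$-fuzzy quasi-ideal is such a bi-ideal, and every such bi-ideal is by definition a generalized bi-ideal. Hence the family of admissible $\mu$ in $(iv)$ contains those allowed in $(iii)$, which contains those allowed in $(ii)$, and an inequality asserted for the larger family of $\mu$ forces the same inequality for every smaller subfamily. So the genuine work sits in $(i)\Rightarrow(iv)$ and in $(ii)\Rightarrow(i)$.

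For $(i)\Rightarrow(iv)$, let $\mu$ be an $(\in_\gamma,\in_\gamma\vee q_\delta)$-fuzzy generalized bi-ideal and $\nu$ an $(\in_\gamma,\in_\gamma\vee q_\delta)$-fuzzy left ideal, and fix $s\in S$. I want one factorization $s=pq$ with $\mu(p)\vee\gamma\ge\mu(s)\wedge\delta$ and $\nu(q)\vee\gamma\ge\nu(s)\wedge\delta$; substituting this into $(\mu\ast\nu)(s)=\big((\bigvee_{s=uv}\mu(u)\wedge\nu(v))\vee\gamma\big)\wedge\delta$ then yields $(\mu\ast\nu)(s)\ge\big((\mu(s)\wedge\nu(s))\vee\gamma\big)\wedge\delta=(\mu\wedge^\ast\nu)(s)$, which is exactly the claim. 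The two target shapes are dictated by the defining bounds: the left ideal inequality $\nu(ab)\vee\gamma\ge\nu(b)\wedge\delta$ gives $\nu(q)\vee\gamma\ge\nu(s)\wedge\delta$ as soon as $q$ has $s$ as its right-hand factor, while the generalized bi-ideal inequality $\mu((as)b)\vee\gamma\ge\mu(a)\wedge\mu(b)\wedge\delta$ gives $\mu(p)\vee\gamma\ge\mu(s)\wedge\delta$ as soon as $p$ has the form $(sW)s$. I would therefore start from the regular representation $s=(sx)s$ and, combining it with the weakly regular representation $s=(sy)(sz)$ and the identities $(1)$–$(4)$, rewrite $s$ as a product $s=\big((sW)s\big)\cdot(Cs)$ for suitable $W,C\in S$ assembled from $x,y,z$.

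For $(ii)\Rightarrow(i)$ I would pass to characteristic functions. Let $R$ be a right ideal and $L$ a left ideal of $S$. Every right ideal is a quasi-ideal, since $RS\cap SR\subseteq RS\subseteq R$, so by the lemma characterizing quasi-ideals $\chi_R^\ast$ is an $(\in_\gamma,\in_\gamma\vee q_\delta)$-fuzzy quasi-ideal, and $\chi_L^\ast$ is an $(\in_\gamma,\in_\gamma\vee q_\delta)$-fuzzy left ideal. Applying $(ii)$ to $\mu=\chi_R^\ast$ and $\nu=\chi_L^\ast$ and using the lemmas relating $\ast$ and $\wedge^\ast$ to set operations, namely $\chi_R^\ast\wedge^\ast\chi_L^\ast=\chi_{R\cap L}^\ast$ and $\chi_R^\ast\ast\chi_L^\ast=\chi_{RL}^\ast$, the inequality collapses to $\chi_{R\cap L}^\ast\le\chi_{RL}^\ast$, i.e. $R\cap L\subseteq RL$. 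As $R,L$ are arbitrary, Theorem \ref{TH12} (the containment characterization of regularity, compare Theorem \ref{TH3}) gives that $S$ is regular.

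The main obstacle is the explicit factorization in $(i)\Rightarrow(iv)$: producing $s=\big((sW)s\big)(Cs)$ demands a careful multi-step computation with the four LA-semigroup identities, substituting both the regular and the weakly regular forms of $s$ at chosen occurrences, in the same spirit as the long chains used earlier in this section. Everything after that is routine bookkeeping with the operators $\vee\gamma$ and $\wedge\delta$, distributing them over $\wedge$ and $\bigvee$ as in the proof that $\mu^\ast$ is a fuzzy LA-subsemigroup; moreover the fact that here a fuzzy generalized bi-ideal is automatically a fuzzy bi-ideal lets me also invoke the subsemigroup bound $\mu(ab)\vee\gamma\ge\mu(a)\wedge\mu(b)\wedge\delta$ should a shorter factorization of $s$ present itself.
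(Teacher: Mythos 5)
Your proof of $(i)\Rightarrow(iv)$ is not actually a proof: you identify the factorization $s=\bigl((sW)s\bigr)(Cs)$ as what you need, call it ``the main obstacle,'' and never produce it. Since that factorization is the entire content of the implication, this is a genuine gap. Moreover the target you set yourself is harder than necessary. Because the inequality must hold for \emph{every} $\left( \in _{\gamma },\in _{\gamma }\vee q_{\delta }\right)$-fuzzy generalized bi-ideal $\mu$, the cheapest factorization is one whose first factor is $s$ itself, so that no property of $\mu$ is invoked at all; only the left-ideal property of $\nu$ is needed. This is what the paper does: from $s=(sy)(sz)$ and identity $(4)$ one gets $s=s((sy)z)$, and then substituting $s=(sx)s$ and using $(ab)c=(cb)a$ together with $(4)$,
\[
s=s((((sx)s)y)z)=s(((ys)(sx))z)=s((s((ys)x))z)=s((z((ys)x))s).
\]
The second factor ends in $s$, so $\nu((z((ys)x))s)\vee\gamma\geq\nu(s)\wedge\delta$, and hence
\[
\left( \mu \ast \nu \right)(s)\geq\bigl(\bigl(\mu(s)\wedge\nu((z((ys)x))s)\bigr)\vee\gamma\bigr)\wedge\delta\geq\bigl((\mu(s)\wedge\nu(s))\vee\gamma\bigr)\wedge\delta=\left( \mu \wedge ^{\ast }\nu \right)(s).
\]
If you insist on a first factor of the form $(sW)s$ you must carry out a separate and longer identity chase, and nothing in the statement rewards you for it; the remark about falling back on the subsemigroup bound does not rescue the argument, since a fuzzy \emph{generalized} bi-ideal need not satisfy it (that is the content of a separate theorem, and in any case you would still owe a factorization).

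The rest of your skeleton matches the paper and is sound. The implications $(iv)\Rightarrow(iii)\Rightarrow(ii)$ are indeed immediate from quasi-ideal $\Rightarrow$ bi-ideal $\Rightarrow$ generalized bi-ideal. For $(ii)\Rightarrow(i)$ you take a different but legitimate route: you pass directly to characteristic functions and the crisp containment $R\cap L\subseteq RL$, then cite Theorem \ref{TH12}, whereas the paper stays at the fuzzy level, observes that every $\left( \in _{\gamma },\in _{\gamma }\vee q_{\delta }\right)$-fuzzy right ideal is a fuzzy quasi-ideal, proves the reverse inequality $\mu\ast\nu\leq\mu\wedge^{\ast}\nu$ by a short computation, and concludes via the equality characterization of Theorem \ref{TH4}. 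Your version is shorter; the only blemish is that you obtain the product in the order $RL$ while Theorem \ref{TH12} is stated for $LR$ --- a discrepancy the paper itself commits in the proof of Theorem \ref{TH4}, but one you should resolve explicitly if you keep this route.
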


\begin{proof}
$(i)\implies (iv).$ Let $\mu $\ be any$\left( \in _{\gamma },\in _{\gamma
}\vee q_{\delta }\right) $-fuzzy generalized bi-ideal and \ $\nu $ be any $%
\left( \in _{\gamma },\in _{\gamma }\vee q_{\delta }\right) $-\ fuzzy left
ideal of $S.$ Let $s\in S$. Since $S$ is regular, so there exist $x\in S,$
such that, $s=\left( sx\right) s.$ Also since $S$ is weakly regular, so
there exist $y,z\in S,$ such that, $s=\left( sy\right) \left( sz\right) .$
Now, by using (4) and (1) we have%
\begin{eqnarray*}
s &=&\left( sy\right) \left( sz\right) =s((sy)z)=s((((sx)s)y)z) \\
&=&s(((ys)(sx))z)=s((s((ys)x))z)=s((z((ys)x))s)
\end{eqnarray*}%
So,%
\begin{eqnarray*}
\left( \mu \ast \nu \right) (s) &=&\left( \left( \mu \nu \right) (s)\vee
\gamma \right) \wedge \delta \\
\ \ \ \ &=&\left( \left( \tbigvee\limits_{s=yz}(\mu (y)\wedge \nu
(z))\right) \vee \gamma \right) \wedge \delta \\
&\geq &\left( \left( (\mu (s)\wedge \nu ((z((ys)x))s)\right) \vee \gamma
\right) \wedge \delta \\
\ &\geq &(\left( \mu (s)\wedge \nu (s))\vee \gamma \right) \wedge \delta \\
\ &=&\left( \left( (\mu \wedge \nu )(s)\right) \vee \gamma \right) \wedge
\delta \\
\ \ &=&(\mu \wedge ^{\ast }\nu )(s).
\end{eqnarray*}%
So, $\left( \mu \wedge ^{\ast }\nu \right) \leq \left( \mu \ast \nu \right)
. $ $(iv)\implies (iii)\implies (ii)$ are obvious. $(ii)\implies (i).$ Let $%
\mu $\ be any $\left( \in _{\gamma },\in _{\gamma }\vee q_{\delta }\right) $%
-fuzzy right ideal and\ $\nu $ be any $\left( \in _{\gamma },\in _{\gamma
}\vee q_{\delta }\right) $-\ fuzzy left ideal of $S.$ Since every $\left(
\in _{\gamma },\in _{\gamma }\vee q_{\delta }\right) $-fuzzy right ideal is
an $\left( \in _{\gamma },\in _{\gamma }\vee q_{\delta }\right) $-fuzzy
quasi-ideal of $S.$ So, $\left( \mu \wedge ^{\ast }\nu \right) \leq \left(
\mu \ast \nu \right) .$ Now,%
\begin{eqnarray*}
\left( \mu \ast \nu \right) (s) &=&\left( \left( \mu \nu \right) (s)\vee
\gamma \right) \wedge \delta \\
\ &=&\left( \left( \tbigvee\limits_{s=yz}(\mu (y)\wedge \nu (z))\right) \vee
\gamma \right) \wedge \delta \\
&=&\left( \left( \tbigvee\limits_{s=yz}(\mu (y)\wedge \nu (z))\wedge \delta
\right) \vee \gamma \right) \wedge \delta \\
&=&\left( \left( \tbigvee\limits_{s=yz}\left( (\mu (y)\wedge \delta \right)
\wedge \left( \nu (z))\wedge \delta \right) \right) \vee \gamma \right)
\wedge \delta \\
\ &\leq &\left( \left( \tbigvee\limits_{s=yz}\left( (\mu (yz)\vee \gamma
\right) \wedge \left( \nu (yz)\vee \gamma \right) \right) \vee \gamma
\right) \wedge \delta \\
\ \ \ \ &=&\left( (\mu (s)\wedge \nu (s))\vee \gamma \right) \wedge \delta \\
\ &=&\left( (\mu \wedge \nu )(s))\vee \gamma \right) \wedge \delta \\
\ \ &=&(\mu \wedge ^{\ast }\nu )(s).
\end{eqnarray*}%
So, $\mu \ast \nu \leq \mu \wedge ^{\ast }\nu .$Thus $\mu \ast \nu =\mu
\wedge ^{\ast }\nu $ for every $\left( \in _{\gamma },\in _{\gamma }\vee
q_{\delta }\right) $-fuzzy right ideal $\mu $ and $\left( \in _{\gamma },\in
_{\gamma }\vee q_{\delta }\right) $-fuzzy left ideal $\nu $ of $S.$ So, by
Theorem 31 we have $S$ is regular.
\end{proof}

\begin{theorem}
\label{TH6}For a weakly regular LA-semigroup $S$ with left identity $e,$ the
following conditions are equvialent:

(i) $S$ is intra-regular.

(ii) $\mu \wedge ^{\ast }\nu \leq \mu \ast \nu $ for every $\left( \in
_{\gamma },\in _{\gamma }\vee q_{\delta }\right) $-fuzzy left ideal $\mu $
and $\left( \in _{\gamma },\in _{\gamma }\vee q_{\delta }\right) $-fuzzy
right ideal$\nu $ of $S.$
\end{theorem}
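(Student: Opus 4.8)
The plan is to prove the two implications separately, following the template established for the preceding regularity characterizations (Theorem \ref{TH4} and those after it): the forward direction by a direct pointwise estimate after rewriting a generic element, and the converse by feeding characteristic functions into the hypothesis and translating back to a crisp containment.

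For $(i)\implies(ii)$ I would fix $s\in S$ and use intra-regularity to write $s=(xs^{2})y=(x(ss))y$ for some $x,y\in S$. The first task is to rearrange this into a product whose left factor absorbs $s$ from the left and whose right factor is $s$ itself. Using the left-identity law $(4)$ gives $x(ss)=s(xs)$, so $s=(s(xs))y$, and then the left invertive law yields $s=(y(xs))s$. With the decomposition $s=(y(xs))\cdot s$ in hand I would estimate $(\mu\ast\nu)(s)=\big((\bigvee_{s=ab}\mu(a)\wedge\nu(b))\vee\gamma\big)\wedge\delta\geq\big((\mu(y(xs))\wedge\nu(s))\vee\gamma\big)\wedge\delta$. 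Since $\nu$ is evaluated on the factor $s$ directly, $\nu(s)$ needs no absorption; the work is entirely on the left factor, where the left-ideal property of $\mu$ must be applied twice and chained, namely $\mu(y(xs))\vee\gamma\geq\mu(xs)\wedge\delta$ together with $\mu(xs)\vee\gamma\geq\mu(s)\wedge\delta$, to reach $\mu(y(xs))\vee\gamma\geq\mu(s)\wedge\delta$. Simplifying the nested $\vee\gamma$, $\wedge\delta$ expressions (exactly as in the earlier proof that $\mu\circ\nu$ is a two-sided ideal) should give $(\mu\ast\nu)(s)\geq\big((\mu(s)\wedge\nu(s))\vee\gamma\big)\wedge\delta=(\mu\wedge^{\ast}\nu)(s)$, which is the desired inequality. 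Note that weak regularity is not actually needed here beyond supplying the left identity that law $(4)$ requires.

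For $(ii)\implies(i)$ I would argue via characteristic functions. Given a left ideal $L$ and a right ideal $R$ of $S$, the starred characteristic functions $\chi_{L}^{\ast}$ and $\chi_{R}^{\ast}$ are an $\left(\in_{\gamma},\in_{\gamma}\vee q_{\delta}\right)$-fuzzy left ideal and right ideal respectively (by the lemma characterizing $\chi_{L}^{\ast}$). Applying the hypothesis with $\mu=\chi_{L}^{\ast}$ and $\nu=\chi_{R}^{\ast}$, and using the identities $\chi_{L}\wedge^{\ast}\chi_{R}=\chi_{L\cap R}^{\ast}$ and $\chi_{L}\ast\chi_{R}=\chi_{LR}^{\ast}$, gives $\chi_{L\cap R}^{\ast}\leq\chi_{LR}^{\ast}$, hence the crisp containment $L\cap R\subseteq LR$. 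From here I would invoke the corresponding well-known crisp characterization of intra-regular LA-semigroups to conclude that $S$ is intra-regular.

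The main obstacle is twofold. In the forward direction the delicate part is not the algebraic rearrangement but the careful bookkeeping of the truncation operators $\vee\gamma$ and $\wedge\delta$ while chaining the left-ideal inequality through the two nested products $y(xs)$ and $xs$; one must verify that the chained estimate survives the outer $\wedge\delta$ so that nothing below $\gamma$ or above $\delta$ is lost. In the converse the subtle point is pinning down precisely which crisp criterion applies: the containment produced is $L\cap R\subseteq LR$, the very shape that Theorem \ref{TH12} attaches to regularity, so I must take care to invoke the intra-regular version of this criterion and to check that the standing weak-regularity and left-identity hypotheses are exactly what that crisp theorem demands.
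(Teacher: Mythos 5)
Your proof is correct and follows the paper's overall template (pointwise estimate after rewriting $s$ in the forward direction, characteristic functions in the converse), but your forward direction uses a genuinely different and simpler decomposition. The paper combines intra-regularity \emph{and} weak regularity to push $s=(xs^{2})y$ all the way to $s=(xs)\bigl(s((y(sp))q)\bigr)$, so that the left-ideal property of $\mu$ is used once on $xs$ and the right-ideal property of $\nu$ once on $s((y(sp))q)$. You stop at $s=(y(xs))s$, evaluate $\nu$ on the bare factor $s$, and chain the left-ideal inequality for $\mu$ twice through $y(xs)$ and $xs$; this is valid (the chain $\mu(y(xs))\vee\gamma\geq(\mu(xs)\vee\gamma)\wedge\delta\geq\mu(s)\wedge\delta$ goes through because $\gamma<\delta$), and it buys the observation that weak regularity is not actually needed for $(i)\Rightarrow(ii)$ — only the left identity, for law $(4)$. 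One bookkeeping point to make explicit: the naive chain lands you at $\mu(s)\wedge(\nu(s)\vee\gamma)\wedge\delta$, which is strictly below the target $(\mu(s)\vee\gamma)\wedge(\nu(s)\vee\gamma)\wedge\delta$ whenever $\mu(s)<\gamma$; you must retain the outer $\vee\gamma$, i.e. use $\mu(y(xs))\vee\gamma\geq(\mu(s)\wedge\delta)\vee\gamma=(\mu(s)\vee\gamma)\wedge\delta$, exactly as the paper's displayed computation does. In the converse your suspicion about the crisp citation is well founded: the paper itself invokes its Theorem \ref{TH12}, whose stated conclusion is regularity rather than intra-regularity, so the crisp fact actually needed (that $L\cap R\subseteq LR$ for all left ideals $L$ and right ideals $R$ forces intra-regularity) has to be supplied separately, precisely as you propose.
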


\begin{proof}
$(i)\implies (ii).$ Let $\mu $\ be any $\left( \in _{\gamma },\in _{\gamma
}\vee q_{\delta }\right) $-fuzzy left ideal and\ $\nu $ be any $\left( \in
_{\gamma },\in _{\gamma }\vee q_{\delta }\right) $-fuzzy right ideal of $S.$
Let $s\in S$. Since $S$ is intra-regular, so there exist $x,y\in S,$ such
that, $s=\left( xs^{2}\right) y.$ Also, since $S$ is weakly regular, so
there exist $p,q\in S,$ such that, $s=\left( sp\right) \left( sq\right) .$
Now, by using (4),(1),(2) and (3) we have%
\begin{eqnarray*}
s &=&(xs^{2})y=(x(ss))y=(s(xs))y=(y(xs))s \\
\ &=&(y(xs))(es)=(se)((xs)y)=(xs)((se)y) \\
\ &=&(xs((((sp)(sq))e)=(xs)(((sq)(sp))y) \\
\ \ &=&(xs)((y(sp))(sq))=(xs)(s((y(sp))q))
\end{eqnarray*}%
So,%
\begin{eqnarray*}
\left( \mu \ast \nu \right) (s) &=&\left( \left( \left( \mu \nu \right)
(s)\right) \vee \gamma \right) \wedge \delta \\
\ &=&\left( \left( \tbigvee\limits_{s=yz}(\mu (y)\wedge \nu (z))\right) \vee
\gamma \right) \wedge \delta \\
\ &\geq &\left( (\mu (xs)\wedge \nu (s((y(sp))q)))\vee \gamma \right) \wedge
\delta \\
\ &=&\left( \left( (\mu (xs)\vee \gamma )\wedge \left( \nu (s((y(sp))q))\vee
\gamma \right) \right) \vee \gamma \right) \wedge \delta \\
&\geq &\left( \left( \left( \mu (s)\wedge \delta \right) \wedge \left( \nu
(s)\wedge \delta \right) \right) \vee \gamma \right) \wedge \delta \\
&=&\left( \left( \mu (s)\wedge \nu (s)\right) \vee \gamma \right) \wedge
\delta \\
\ &=&\left( \left( \mu \wedge \nu \right) (s))\vee \gamma \right) \wedge
\delta \\
&=&\left( \mu \wedge ^{\ast }\nu \right) (s)
\end{eqnarray*}%
Therefore, $\left( \mu \wedge ^{\ast }\nu \right) \leq \left( \mu \ast \nu
\right) $ for every $\left( \in _{\gamma },\in _{\gamma }\vee q_{\delta
}\right) $-fuzzy left ideal $\mu $ and $\left( \in _{\gamma },\in _{\gamma
}\vee q_{\delta }\right) $-fuzzy right ideal $\nu $ of $S.$ $(ii)\implies
(i) $. Let $R$ and $L$ be right and left ideals of \ $S,$ then $\chi _{R}$
and $\chi _{L}$ are $\left( \in _{\gamma },\in _{\gamma }\vee q_{\delta
}\right) $-fuzzy right ideal and $\left( \in _{\gamma },\in _{\gamma }\vee
q_{\delta }\right) $-fuzzy left ideals of $S,$ respectively. By hypothesis,
we have $\chi _{LR}^{\ast }=\left( \chi _{L}\ast \chi _{R}\right) \geq
\left( \chi _{L}\wedge ^{\ast }\chi _{R}\right) =\chi _{L\cap R}^{\ast }.$
Thus, $L\cap R\subseteq LR.$ Hence, it follows from Theorem 2, that $S$ is
intera-regular.
\end{proof}

\begin{theorem}
For a weakly regular LA-semigroup $S$ with left identity $e,$ the following
conditions are equivalent:

(i) $S$ is regular and intra-regular.

(ii) $\mu \ast \mu =\mu ^{\ast }$ for every $\left( \in _{\gamma },\in
_{\gamma }\vee q_{\delta }\right) $-fuzzy quasi- ideal $\mu $ of $S.$

(iii) $\mu \ast \mu =\mu ^{\ast }$ for every $\left( \in _{\gamma },\in
_{\gamma }\vee q_{\delta }\right) $-fuzzy bi- ideal $\mu $ of $S.$

(iv) $\mu \ast \nu \geq \mu \wedge ^{\ast }\nu $ for every $\left( \in
_{\gamma },\in _{\gamma }\vee q_{\delta }\right) $-fuzzy quasi- ideals $\mu $
and $\nu $ of $S.$

(iv)$\mu \ast \nu \geq \mu \wedge ^{\ast }\nu $ for every $\left( \in
_{\gamma },\in _{\gamma }\vee q_{\delta }\right) $-fuzzy quasi- ideal $\mu $
and for every $\left( \in _{\gamma },\in _{\gamma }\vee q_{\delta }\right) $%
-fuzzy bi- ideals $\nu $ of $S.$

(vi)$\mu \ast \nu \geq \mu \wedge ^{\ast }\nu $ for every $\left( \in
_{\gamma },\in _{\gamma }\vee q_{\delta }\right) $-fuzzy bi- ideals $\mu $
and $\nu $ of $S.$
\end{theorem}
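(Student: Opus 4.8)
The plan is to close the cycle $(i)\implies(vi)\implies(v)\implies(iv)\implies(ii)\implies(i)$ together with $(vi)\implies(iii)\implies(ii)$, so that all six statements become equivalent. The downward implications $(vi)\implies(v)\implies(iv)$ and $(iii)\implies(ii)$ are immediate specialisations: under the standing hypothesis (weakly regular with left identity) the earlier theorem guarantees that every $\left( \in _{\gamma },\in _{\gamma }\vee q_{\delta }\right) $-fuzzy quasi-ideal is a $\left( \in _{\gamma },\in _{\gamma }\vee q_{\delta }\right) $-fuzzy bi-ideal, so any statement quantified over all fuzzy bi-ideals restricts to fuzzy quasi-ideals. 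Thus the only substantial forward step is $(i)\implies(vi)$.

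Before that I would record a universal inequality. Whenever $\mu$ is a fuzzy bi-ideal (in particular a fuzzy quasi-ideal, by the same earlier theorem), it is a fuzzy LA-subsemigroup, hence $\mu ^{\ast }$ is a fuzzy LA-subsemigroup by the Proposition above, which gives $\mu ^{\ast }\circ \mu ^{\ast }\leq \mu ^{\ast }$; combining with $\mu \ast \mu =\mu ^{\ast }\circ \mu ^{\ast }$ (the lemma $\mu \ast \nu =\mu ^{\ast }\circ \nu ^{\ast }$) yields $\mu \ast \mu \leq \mu ^{\ast }$ for \emph{every} such $\mu$. Since $\mu \wedge ^{\ast }\mu =\mu ^{\ast }$, taking $\nu =\mu $ in $(iv)$ (resp. $(vi)$) gives $\mu \ast \mu \geq \mu ^{\ast }$, so $\mu \ast \mu =\mu ^{\ast }$; this is exactly $(iv)\implies(ii)$ and $(vi)\implies(iii)$.

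For $(ii)\implies(i)$ I would pass to characteristic functions. Let $Q$ be a crisp quasi-ideal of $S$; then $\chi _{Q}^{\ast }$ is a $\left( \in _{\gamma },\in _{\gamma }\vee q_{\delta }\right) $-fuzzy quasi-ideal by the corresponding lemma, so $(ii)$ gives $\chi _{Q}^{\ast }\ast \chi _{Q}^{\ast }=\left( \chi _{Q}^{\ast }\right) ^{\ast }=\chi _{Q}^{\ast }$. On the other hand $\chi _{Q}^{\ast }\ast \chi _{Q}^{\ast }=\chi _{Q}\ast \chi _{Q}=\chi _{QQ}^{\ast }$ by the product lemma $\chi _{L}\ast \chi _{R}=\chi _{LR}^{\ast }$. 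Comparing forces $\chi _{Q^{2}}^{\ast }=\chi _{Q}^{\ast }$, whence $Q^{2}=Q$; that is, every quasi-ideal of $S$ is idempotent, and Theorem \ref{TH2} then yields that $S$ is regular and intra-regular.

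The heart of the argument is $(i)\implies(vi)$. Fixing fuzzy bi-ideals $\mu ,\nu $ and $s\in S$, I must exhibit a single factorisation $s=uv$ with $\mu (u)\vee \gamma \geq \mu (s)\wedge \delta $ and $\nu (v)\vee \gamma \geq \nu (s)\wedge \delta $; the definition of $\ast $ and the $\gamma ,\delta $ bookkeeping (identical to the earlier proofs in this section) then give $\left( \mu \ast \nu \right) (s)\geq \left( \mu \wedge ^{\ast }\nu \right) (s)$. Because a bi-ideal controls precisely the "sandwich" values $\mu (\left( s\,w\right) s)\vee \gamma \geq \mu (s)\wedge \delta $ and $\mu (ss)\vee \gamma \geq \mu (s)\wedge \delta $, the whole implication reduces to the purely algebraic task of rewriting $s$ as a product $s=\big(\left( s\,w_{1}\right) s\big)\big(\left( s\,w_{2}\right) s\big)$ of two bi-ideal-accessible factors, using the regular form $s=(sx)s$, the intra-regular form $s=(ys^{2})z$, the weakly regular form $s=(sp)(sq)$ and the LA-identities $a(bc)=b(ac)$, $(ab)(cd)=(ac)(bd)$, $(ab)(cd)=(db)(ca)$ together with the left-invertive law; feeding $u=\left( s\,w_{1}\right) s$ and $v=\left( s\,w_{2}\right) s$ into the two bi-ideal inequalities finishes the proof. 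I expect this rewriting to be the main obstacle: naive substitutions collapse, since regularity keeps reproducing $s=(sw)s$ as a \emph{single} element, while the medial and left-invertive moves tend to return one accessible factor together with one uncontrolled factor. The correct chain must therefore interleave the three witnesses so that the copy of $s^{2}$ produced by intra-regularity is split, each of its two factors anchoring the left end of one sandwich, and only afterwards is the left-invertive law applied to seal the two right ends; carrying out that specific manipulation is the step that requires care.
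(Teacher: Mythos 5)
Your overall architecture matches the paper's: the implication chain $(i)\Rightarrow(vi)\Rightarrow(v)\Rightarrow(iv)\Rightarrow(ii)\Rightarrow(i)$ with $(iii)\Rightarrow(ii)$ handled by specialisation, the universal inequality $\mu\ast\mu=\mu^{\ast}\circ\mu^{\ast}\leq\mu^{\ast}$ obtained from the LA-subsemigroup property of $\mu^{\ast}$ (your justification of this step via the Proposition is in fact more careful than the paper's one-line appeal), the identity $\mu\wedge^{\ast}\mu=\mu^{\ast}$ to convert $(iv)$ and $(vi)$ into $(ii)$ and $(iii)$, and the passage $(ii)\Rightarrow(i)$ through $\chi_{Q}$, $\chi_{QQ}^{\ast}=\chi_{Q}^{\ast}$, $QQ=Q$ and Theorem \ref{TH2}. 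All of that is sound and is essentially what the paper does.

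The gap is in $(i)\Rightarrow(vi)$, which you correctly identify as the heart of the matter but do not actually carry out. You reduce the implication to exhibiting a factorisation $s=\bigl((sw_{1})s\bigr)\bigl((sw_{2})s\bigr)$ and then state that producing it ``is the step that requires care''; no chain of identities is given, so the proof of the only substantive implication is missing. For the record, the paper does produce exactly the form you predict: starting from $s=(sx)s=(sx)\bigl((sx)s\bigr)$, substituting the weakly regular expression $s=(sp)(sq)$ into the left factor and repeatedly applying $(4)$, the left-invertive law and the medial laws, it arrives at $s=\bigl(\bigl(s\bigl(x\bigl((qp)((sp)q)\bigr)\bigr)\bigr)s\bigr)\bigl((sx)s\bigr)$, after which the two generalized bi-ideal inequalities $\mu\bigl((sw)s\bigr)\vee\gamma\geq\mu(s)\wedge\delta$ and $\nu\bigl((sx)s\bigr)\vee\gamma\geq\nu(s)\wedge\delta$ finish the estimate exactly as you describe. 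Note also that your heuristic for how the rewriting must go is off the mark: the paper's chain never splits an $s^{2}$ coming from intra-regularity --- it uses only the regular witness $s=(sx)s$ and the weakly regular witness $s=(sp)(sq)$ --- so the manipulation is more elementary than you anticipate, but it still has to be written down for the proof to be complete.
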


\begin{proof}
$(i)\implies (iv)$. Let $\mu $ and $\nu $ be $\left( \in _{\gamma },\in
_{\gamma }\vee q_{\delta }\right) $-fuzzy bi-ideals of $S$ and $s\in S.$
Since $S$ is regular and intra-regular, so there exists $x,y$ and $z\in S,$
such that, $s=\left( sx\right) s$ and $s=\left( ys^{2}\right) z.$ Also, $S$
is weakly regular so there exist $p,q\in S,$ such that, $s=(sp)(sq).$ Now by
using (1) and (4) we have,%
\begin{eqnarray*}
s &=&(sx)s=(sx)((sx)s)=(((sp)(sq))x)((sx)s) \\
&=&((s((sp)q))x)((sx)s)=((x(sp)q))s)((sx)s) \\
&=&((x((sp)sq))p)q))s)((sx)s) \\
&=&((x((qp)((sp)(sq))))s)((sx)s) \\
&=&((x((qp)(s((sp)q))))s)((sx)s) \\
&=&((x(s((qp)((sp)q))))s)(sx)s) \\
&=&((s(x((qp)((sp)q))))s)((sx)s).
\end{eqnarray*}%
Then,%
\begin{eqnarray*}
\left( \mu \ast \nu \right) \left( s\right) &=&\left( \left( \mu \nu \right)
(s)\vee \gamma \right) \wedge \delta \\
&=&\left( \left( \bigvee_{s=yz}\{\mu (y)\wedge \nu (z)\}\right) \vee \gamma
\right) \wedge \delta \\
&\geq &\left( \{\mu ((s(x((qp)((sp)q))))s)\wedge \nu ((sx)s)\}\vee \gamma
\right) \wedge \delta \\
&=&\left[ \left( \{\left( \mu ((s(x((qp)((sp)q))))s)\vee \gamma \right)
\wedge (\nu ((sx)s)\right) \vee \gamma )\}\vee \gamma \right] \wedge \delta
\\
&\geq &\left[ \{\left( \mu (s)\wedge \delta \right) \wedge (\nu (s)\wedge
\delta )\}\vee \gamma \right] \wedge \delta \\
&=&\left[ \{\mu (s)\wedge \nu (s)\}\vee \gamma \right] \wedge \delta \\
&=&\left[ \{\left( \mu \wedge \nu \right) (s)\}\vee \gamma \right] \wedge
\delta \\
&=&\left( \mu \wedge ^{\ast }\nu \right) (s).
\end{eqnarray*}%
Thus, $\mu \ast \nu \geq \mu \wedge ^{\ast }\nu $ for every $\left( \in
_{\gamma },\in _{\gamma }\vee q_{\delta }\right) $-fuzzy bi- ideals $\mu $
and $\nu $ of $S$. $(vi)\implies (v)\implies (iv)$ are obvious. $%
(iv)\implies (ii).$ Put $\mu =\nu $ in $(iv),$ we get $\mu \ast \mu \geq \mu
^{\ast }.$ Since every $\left( \in _{\gamma },\in _{\gamma }\vee q_{\delta
}\right) $-fuzzy quasi-ideal is $\left( \in _{\gamma },\in _{\gamma }\vee
q_{\delta }\right) $-fuzzy LA-subsemigroup, so $\mu \ast \mu \leq \mu ^{\ast
}.$ Thus, $\mu \ast \mu =\mu ^{\ast }.$ $(iii)\implies (ii)$ is obvious. $%
(ii)\implies (i).$ Let $Q$ be a quasi ideal of $S.$ Then, by lemma 3$,$ $%
\chi _{Q}$ is an $\left( \in _{\gamma },\in _{\gamma }\vee q_{\delta
}\right) $-fuzzy quasi-ideal of $S.$ Hence, by hypothesis, $\chi _{Q}\ast
\chi _{Q}=\chi _{Q}^{\ast }.$ Thus, $\chi _{QQ}^{\ast }=\chi _{Q}\ast \chi
_{Q}=\chi _{Q}^{\ast },$ implies that $QQ=Q.$ So, by Theorem \ref{TH2}, $S$
is both regular and intra-regular.
\end{proof}

\begin{theorem}
For a weakly regular LA-semigroup $S$ with left identity $e,$ the following
conditions are equivalent:

(i) $S$ is regular and intra-regular.

(ii) $\left( \mu \ast \nu \right) \wedge \left( \nu \ast \mu \right) \geq
\mu \wedge ^{\ast }\nu $ for every $\left( \in _{\gamma },\in _{\gamma }\vee
q_{\delta }\right) $-fuzzy right ideal $\mu $ and every $\left( \in _{\gamma
},\in _{\gamma }\vee q_{\delta }\right) $-fuzzy left ideal $\nu $ of $S.$

(iii) $\left( \mu \ast \nu \right) \wedge \left( \nu \ast \mu \right) \geq
\mu \wedge ^{\ast }\nu $ for every $\left( \in _{\gamma },\in _{\gamma }\vee
q_{\delta }\right) $-fuzzy right ideal $\mu $ and every $\left( \in _{\gamma
},\in _{\gamma }\vee q_{\delta }\right) $-fuzzy quasi- ideal $\nu $ of $S.$

(iv) $\left( \mu \ast \nu \right) \wedge \left( \nu \ast \mu \right) \geq
\mu \wedge ^{\ast }\nu $ for every $\left( \in _{\gamma },\in _{\gamma }\vee
q_{\delta }\right) $-fuzzy right ideal $\mu $ and every $\left( \in _{\gamma
},\in _{\gamma }\vee q_{\delta }\right) $-fuzzy bi-ideal $\nu $ of $S.$

(v) $\left( \mu \ast \nu \right) \wedge \left( \nu \ast \mu \right) \geq \mu
\wedge ^{\ast }\nu $ for every $\left( \in _{\gamma },\in _{\gamma }\vee
q_{\delta }\right) $-fuzzy right ideal $\mu $ and every $\left( \in _{\gamma
},\in _{\gamma }\vee q_{\delta }\right) $-fuzzy generideal bi-ideal $\nu $
of $S.$

(vi) $\left( \mu \ast \nu \right) \wedge \left( \nu \ast \mu \right) \geq
\mu \wedge ^{\ast }\nu $ for every $\left( \in _{\gamma },\in _{\gamma }\vee
q_{\delta }\right) $-fuzzy left ideal $\mu $ and every $\left( \in _{\gamma
},\in _{\gamma }\vee q_{\delta }\right) $-fuzzy quasi- ideal $\nu $ of $S.$

(vii) $\left( \mu \ast \nu \right) \wedge \left( \nu \ast \mu \right) \geq
\mu \wedge ^{\ast }\nu $ for every $\left( \in _{\gamma },\in _{\gamma }\vee
q_{\delta }\right) $-fuzzy left ideal $\mu $ and every $\left( \in _{\gamma
},\in _{\gamma }\vee q_{\delta }\right) $-fuzzy bi-ideal $\nu $ of $S.$

(viii) $\left( \mu \ast \nu \right) \wedge \left( \nu \ast \mu \right) \geq
\mu \wedge ^{\ast }\nu $ for every $\left( \in _{\gamma },\in _{\gamma }\vee
q_{\delta }\right) $-fuzzy left ideal $\mu $ and every $\left( \in _{\gamma
},\in _{\gamma }\vee q_{\delta }\right) $-fuzzy generideal bi-ideal $\nu $
of $S.$

(ix) $\left( \mu \ast \nu \right) \wedge \left( \nu \ast \mu \right) \geq
\mu \wedge ^{\ast }\nu $ for every $\left( \in _{\gamma },\in _{\gamma }\vee
q_{\delta }\right) $-fuzzy quasi-ideals $\mu $ and $\nu $ of $S.$

(x) $\left( \mu \ast \nu \right) \wedge \left( \nu \ast \mu \right) \geq \mu
\wedge ^{\ast }\nu $ for every $\left( \in _{\gamma },\in _{\gamma }\vee
q_{\delta }\right) $-fuzzy quasi-ideal $\mu $ and every $\left( \in _{\gamma
},\in _{\gamma }\vee q_{\delta }\right) $-fuzzy left ideal $\nu $ of $S.$

(xi) $\left( \mu \ast \nu \right) \wedge \left( \nu \ast \mu \right) \geq
\mu \wedge ^{\ast }\nu $ for every $\left( \in _{\gamma },\in _{\gamma }\vee
q_{\delta }\right) $-fuzzy quasi-ideal $\mu $ and every $\left( \in _{\gamma
},\in _{\gamma }\vee q_{\delta }\right) $-fuzzy generalized bi-ideal $\nu $
of $S.$

(xii) $\left( \mu \ast \nu \right) \wedge \left( \nu \ast \mu \right) \geq
\mu \wedge ^{\ast }\nu $ for every $\left( \in _{\gamma },\in _{\gamma }\vee
q_{\delta }\right) $-fuzzy bi-ideals $\mu $ and $\nu $ of $S.$

(xiii) $\left( \mu \ast \nu \right) \wedge \left( \nu \ast \mu \right) \geq
\mu \wedge ^{\ast }\nu $ for every $\left( \in _{\gamma },\in _{\gamma }\vee
q_{\delta }\right) $-fuzzy bi-ideal $\mu $ and every $\left( \in _{\gamma
},\in _{\gamma }\vee q_{\delta }\right) $-fuzzy generalized bi-ideal $\nu $
of $S.$

(ixv) $\left( \mu \ast \nu \right) \wedge \left( \nu \ast \mu \right) \geq
\mu \wedge ^{\ast }\nu $ for every $\left( \in _{\gamma },\in _{\gamma }\vee
q_{\delta }\right) $-fuzzy generalized bi- ideals $\mu $ and $\nu $ of $S.$
\end{theorem}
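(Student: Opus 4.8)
The plan is to establish the whole equivalence through its two extreme conditions. Two structural facts organise everything: first, the ideal classes are nested, generalized bi-ideal $\supseteq$ bi-ideal $\supseteq$ quasi-ideal $\supseteq$ one-sided ideal; second, the inequality $(\mu\ast\nu)\wedge(\nu\ast\mu)\geq\mu\wedge^{\ast}\nu$ is symmetric in $\mu$ and $\nu$. Hence condition $(ixv)$, where $\mu$ and $\nu$ both range over generalized bi-ideals, is the strongest and implies every other condition, while $(ii)$, where $\mu$ is a right and $\nu$ a left ideal, is the weakest and is implied by every other condition (symmetry lets us place a right ideal in whichever slot can accommodate it). It therefore suffices to prove $(i)\Rightarrow(ixv)$ and $(ii)\Rightarrow(i)$; all intermediate implications $(ixv)\Rightarrow C\Rightarrow(ii)$ are then immediate from these nestings.

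For $(i)\Rightarrow(ixv)$ I would reuse the factorisation from the preceding regular-and-intra-regular theorem. Fixing $s\in S$ and combining regularity $s=(sx)s$, intra-regularity $s=(ys^{2})z$ and weak regularity $s=(sp)(sq)$ with the laws $(1)$ and $(4)$, one rewrites
\begin{equation*}
s=((s(x((qp)((sp)q))))s)((sx)s)=F_{1}F_{2}.
\end{equation*}
The decisive feature is that each factor already has the shape $(at)b$ to which the generalized bi-ideal inequality applies, so for any $\left( \in _{\gamma },\in _{\gamma }\vee q_{\delta }\right)$-fuzzy generalized bi-ideal $\lambda$ we obtain both $\lambda(F_{1})\vee\gamma\geq\lambda(s)\wedge\delta$ and $\lambda(F_{2})\vee\gamma\geq\lambda(s)\wedge\delta$. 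Using $\lambda=\mu$ on $F_{1}$ and $\lambda=\nu$ on $F_{2}$ bounds $(\mu\ast\nu)(s)$ below by $(\mu\wedge^{\ast}\nu)(s)$; using $\lambda=\nu$ on $F_{1}$ and $\lambda=\mu$ on $F_{2}$ does the same for $(\nu\ast\mu)(s)$. Taking the meet gives $(ixv)$.

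For the closing $(ii)\Rightarrow(i)$ I pass to characteristic functions. Let $R$ be a right and $L$ a left ideal; then $\chi_{R}^{\ast}$ and $\chi_{L}^{\ast}$ are $\left( \in _{\gamma },\in _{\gamma }\vee q_{\delta }\right)$-fuzzy right and left ideals, so $(ii)$ together with the preceding lemmas yields
\begin{equation*}
\chi_{RL\cap LR}^{\ast}=(\chi_{R}\ast\chi_{L})\wedge(\chi_{L}\ast\chi_{R})\geq\chi_{R}\wedge^{\ast}\chi_{L}=\chi_{R\cap L}^{\ast},
\end{equation*}
i.e. $R\cap L\subseteq RL\cap LR$. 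Since $RL\subseteq RS\cap SL\subseteq R\cap L$ always holds, the first inclusion upgrades to $RL=L\cap R$, whence $S$ is regular by Theorem~\ref{TH3}; the second inclusion $L\cap R\subseteq LR$ gives intra-regularity by Theorem~\ref{TH12}. Thus $(i)$ follows. Alternatively, imitating the closing of the previous theorem, one may route through a quasi-ideal $Q$: taking $\mu=\nu=\chi_{Q}$ in the quasi-ideal case $(ix)$ gives $Q\subseteq QQ$, which with $QQ\subseteq QS\cap SQ\subseteq Q$ makes $Q$ idempotent, so $S$ is regular and intra-regular by Theorem~\ref{TH2}.

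The genuine obstacle is the $(i)\Rightarrow(ixv)$ step, and within it the algebraic manipulation that produces a single factorisation $s=F_{1}F_{2}$ whose two factors simultaneously lie in the generalized-bi-ideal-friendly form $(at)b$; once that factorisation is in hand the rest is bookkeeping with the truncations $(\cdot)\vee\gamma$ and $(\cdot)\wedge\delta$ and the monotonicity of $\ast$.
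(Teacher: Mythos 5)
Your overall architecture coincides with the paper's: prove $(i)\Rightarrow(ixv)$ and $(ii)\Rightarrow(i)$ and let the nesting of the ideal classes, together with the symmetry of $(\mu \ast \nu)\wedge(\nu\ast\mu)\geq \mu\wedge^{\ast}\nu$, supply every intermediate implication. Both of your key steps, however, take genuinely different routes. For $(i)\Rightarrow(ixv)$ the paper does \emph{not} reuse the factorisation $s=((s(x((qp)((sp)q))))s)((sx)s)$ from the preceding theorem; it derives a new one, $s=((s^{2}(zx((y(zx))y)))s^{2})((sx)s)$, whose left factor involves $s^{2}$, and must therefore interpolate an auxiliary computation $s^{2}=(s((sx)((xe)s)))s$ to get $\mu(s^{2})\vee\gamma\geq\mu(s)\wedge\delta$ before the generalized bi-ideal inequality applies. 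Your factorisation, in which both factors already have the shape $(as)b$ with outer entries $s$, removes that detour and makes the bound for $\nu\ast\mu$ literally the same computation with the roles of $\mu$ and $\nu$ exchanged, whereas the paper's closing ``similarly'' still tacitly needs the $s^{2}$ step for $\nu$. For $(ii)\Rightarrow(i)$ the paper stays fuzzy: it shows $\mu\ast\nu\leq\mu\wedge^{\ast}\nu$ holds unconditionally for a right ideal $\mu$ and left ideal $\nu$, upgrades the hypothesis to equality and cites Theorem~\ref{TH4} for regularity, then extracts intra-regularity from the $\nu\ast\mu$ half via Theorem~\ref{TH6}; you descend instead to crisp ideals through characteristic functions. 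Both closings are workable, and yours buys a more concrete argument at the cost of leaning on the crisp preliminaries.

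Two caveats on your closing step. First, Theorem~\ref{TH12} as stated characterises \emph{regularity}, not intra-regularity, by the inclusion $L\cap R\subseteq LR$; the paper is internally inconsistent on this point (its own proof of Theorem~\ref{TH6} draws intra-regularity from that same inclusion), so the safer citation for the intra-regular half is Theorem~\ref{TH6} applied to $\nu\ast\mu\geq\nu\wedge^{\ast}\mu$, as the paper does. Second, your ``alternative'' closing through condition $(ix)$ and Theorem~\ref{TH2} does not close the full cycle: in your own reduction order $(ix)$ is strictly stronger than $(ii)$, so an implication $(ix)\Rightarrow(i)$ proves only a sub-equivalence. Your primary crisp-ideal argument (or the paper's fuzzy one) is what actually completes the proof.
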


\begin{proof}
$(i)\Longrightarrow (ixv)$ Let $\mu $ and $\nu $ be the $\left( \in _{\gamma
},\in _{\gamma }\vee q_{\delta }\right) $-fuzzy generalized bi-ideals of $S$
and $s\in S.$ Since $S$ is regular and intra-regular, so there exist $x,y$
and $z\in S,$ such that $s=\left( sx\right) s$ and $s=\left( ys^{2}\right)
z. $ Now, by using $(2),(3),(4)$ and $(1)$ we have,%
\begin{eqnarray*}
s &=&(sx)s=(((sx)s)x)((sx)s)=(((sx)((ys^{2})z))x)((sx)s) \\
&=&(((sx)((ys^{2})(ez)))x)((sx)s)=(((sx)((ze)(s^{2}y)))x)((sx)s) \\
&=&(((sx)(s^{2}((ze)y)))x)((sx)s)=((s^{2}((sx)((ze)y)))x)((sx)s) \\
&=&((x((sx)((ze)y))s^{2})((sx)s)=((x((ze)((sx)y)))s^{2})((sx)s) \\
&=&((x(z(sx))y))s^{2})((sx)s)=(((z(sx))(xy))s^{2})((sx)s) \\
&=&(((zx)((sx)y))s^{2})((sx)s)=(((zx)((((ys^{2})z)x)y))s^{2})((sx)s) \\
&=&(((zx)(((xz)(ys^{2}))ys^{2})((sx)s)=(((zx)(((s^{2}y)(zx))y))s^{2})((sx)s)
\\
&=&(((zx)((((zx)y)s^{2})y))s^{2})((sx)s)=(((zx)((ys^{2})((zx)y)))s^{2})((sx)s)
\\
&=&(((zx)((y(zx))(s^{2}y)))s^{2})((sx)s)=(((zx)(s^{2}((y(zx))y)))s^{2})((sx)s)
\\
&=&((s^{2}(zx((y(zx))y)))s^{2})((sx)s).
\end{eqnarray*}%
Then,%
\begin{eqnarray*}
\left( \mu \ast \nu \right) (s) &=&\left( (\mu \nu )(s)\vee \gamma \right)
\wedge \delta \\
&=&\left( \left( \bigvee_{s=bc}\{\mu (b)\wedge \nu (c)\}\right) \vee \gamma
\right) \wedge \delta \\
&\geq &\left( \{\mu ((s^{2}(zx((y(zx))y)))s^{2})\wedge \nu ((sx)s)\}\vee
\gamma \right) \wedge \delta \\
&=&\left( \{\left( \mu ((s^{2}(zx((y(zx))y)))s^{2})\vee \gamma \right)
\wedge \left( \nu ((sx)s)\vee \gamma \right) \}\vee \gamma \right) \wedge
\delta \\
&\geq &\left( \{\left( \mu ((s^{2})\wedge \delta \right) \wedge \left( \nu
(s)\wedge \delta \right) \}\vee \gamma \right) \wedge \delta \\
&=&\left( \{\mu (s^{2})\wedge \nu (s)\}\vee \gamma \right) \wedge \delta .
\end{eqnarray*}%
Now, by using $(1),(4),(2)$ and $(3)$ we have%
\begin{eqnarray*}
s^{2} &=&ss=((sx)s)((sx)s) \\
&=&(((sx)s)s)(sx)=((ss)(sx))(sx) \\
&=&(s((ss)x))(sx)=(((sx)((ss)x))s) \\
&=&((sx)((ss)(ex)))s=((sx)((xe)(ss)))s \\
&=&((sx)(s((xe)s)))s=(s((sx)((xe)s)))s.
\end{eqnarray*}%
Now, since $\mu $\ is an $\left( \in _{\gamma },\in _{\gamma }\vee q_{\delta
}\right) $-fuzzy generalized bi-ideals of $S.$ Therefore,%
\begin{eqnarray*}
\mu (s^{2})\vee \gamma &=&\mu ((s((sx)((xe)s)))s)\vee \gamma \\
&\geq &\mu (s)\wedge \mu (s)\wedge \delta \\
&=&\mu (s)\wedge \delta .
\end{eqnarray*}%
Thus, 
\begin{eqnarray*}
\left( \mu \ast \nu \right) (s) &\geq &\left( \{\mu (s^{2})\wedge \nu
(s)\}\vee \gamma \right) \wedge \delta \\
&=&\left( \{\left( \mu (s^{2})\vee \gamma \right) \wedge \nu (s)\}\vee
\gamma \right) \wedge \delta \\
&\geq &\left( \{\left( \mu (s)\wedge \delta \right) \wedge \nu (s)\}\vee
\gamma \right) \wedge \delta \\
&=&\left( \{\mu (s)\wedge \nu (s)\}\vee \gamma \right) \wedge \delta \\
&=&\left( \left( \mu \wedge \nu \right) (s)\vee \gamma \right) \wedge \delta
\\
&=&\left( \mu \wedge ^{\ast }\nu \right) (s).
\end{eqnarray*}%
Similarly, we can prove that $\left( \nu \ast \mu \right) (s)\geq \left( \mu
\wedge ^{\ast }\nu \right) (s).$ Hence, $\left( \mu \ast \nu \right) \wedge
\left( \nu \ast \mu \right) \geq \left( \mu \wedge ^{\ast }\nu \right) .$ $%
(ixv)\implies (xiii)\implies (xii)\implies (x)\implies (ix)\implies
(iii)\implies (ii),$ $(ixv)\implies (xi)\implies (x)$, $(ixv)\implies
(viii)\implies (iiv)\implies (xi)\implies (ii)$ and $(ixv)\implies
(v)\implies (iv)\implies (iii)\implies (ii)$ are obvious. $(ii)\implies (i).$
Let $\mu $ be an $\left( \in _{\gamma },\in _{\gamma }\vee q_{\delta
}\right) $-fuzzy right ideal and $\nu $\ be an $\left( \in _{\gamma },\in
_{\gamma }\vee q_{\delta }\right) $-fuzzy left ideal of $S.$ For $s\in S,$
we have%
\begin{eqnarray*}
\left( \mu \ast \nu \right) (s) &=&\left( (\mu \nu )(s)\vee \gamma \right)
\wedge \delta \\
&=&\left( \left( \bigvee_{s=yz}\{\mu (y)\wedge \nu (z)\}\right) \vee \gamma
\right) \wedge \delta \\
&=&\left( \left( \bigvee_{s=yz}\{\left( \mu (y)\wedge \delta \right) \wedge
\left( \nu (z)\wedge \delta \right) \}\right) \vee \gamma \right) \wedge
\delta \\
&\leq &\left( \left( \bigvee_{s=yz}\{\left( \mu (yz)\vee \gamma \right)
\wedge \left( \nu (yz)\vee \gamma \right) \}\right) \vee \gamma \right)
\wedge \delta \\
&=&\left( \left( \bigvee_{s=yz}\{\mu (yz)\wedge \nu (yz)\}\right) \vee
\gamma \right) \wedge \delta \\
&=&\left( \{\mu (s)\wedge \nu (s)\}\vee \gamma \right) \wedge \delta \\
&=&\left( \left( \mu \wedge \nu \right) (s)\vee \gamma \right) \wedge \delta
\\
&=&\left( \mu \wedge ^{\ast }\nu \right) (s).
\end{eqnarray*}%
Therefore, $\mu \ast \nu \leq \mu \wedge ^{\ast }\nu .$ By hypothesis, $\mu
\ast \nu \geq \mu \wedge ^{\ast }\nu .$ Thus, $\left( \mu \ast \nu \right)
=\left( \mu \wedge ^{\ast }\nu \right) .$ Hence, by Theorem \ref{TH4}, $S$
is regular. Also by hypothesis, $\mu \ast \nu \geq \mu \wedge ^{\ast }\nu .$
So, by Theorem \ref{TH6}, $S$ \ is intra-regular.
\end{proof}

\section{Conclusion}

\end{document}